\DeclareMathAlphabet{\mathcal}{OMS}{cmsy}{m}{n} 
\theoremstyle{plain}
\newtheorem{thm}{Theorem}[section]
\newtheorem{cor}[thm]{Corollary}
\newtheorem{lem}[thm]{Lemma}
\newtheorem{prop}[thm]{Proposition}
\theoremstyle{definition}
\theoremstyle{remark}
\newtheorem{rem}[thm]{Remark} 
\numberwithin{equation}{section}
\newcommand{\dd}{\mathrm{d}}
\newcommand{\ee}{\mathrm{e}}
\newcommand{\Exp}{\mathds{E}}
\DeclarePairedDelimiter{\abs}{\lvert}{\rvert}
\DeclarePairedDelimiter{\norm}{\lVert}{\rVert}
\DeclarePairedDelimiter{\bra}{(}{)}
\DeclarePairedDelimiter{\pra}{[}{]}
\DeclarePairedDelimiter{\set}{\{}{\}}
\newcommand{\dx}[1]{\mathrm{d} #1}
\newcommand{\cZ}{\ensuremath{\mathcal Z}}
\newcommand{\N}{{\mathbb N}}
\newcommand{\R}{{\mathbb R}}
\newcommand{\T}{{\mathbb T}}
\newcommand{\Z}{{\mathbb Z}}
\newcommand{\eps}{{\varepsilon}}
\newcommand{\al}{\alpha}
\DeclareMathOperator{\supp}{supp}
\renewcommand{\tilde}{\widetilde}
\renewcommand{\bar}{\overline}
\renewcommand{\eps}{\varepsilon}
\newcommand{\dxi}{\frac{\partial}{\partial \xi}}
\newcommand{\E}[1]{\mathds{E}\pra*{ #1}}
\definecolor{darkpink}{rgb}{1,0,.6}
\title{Gradient-type estimates for the dynamic \texorpdfstring{$\varphi^4_2$}{varphi42}-model}
\author{Florian Kunick, Pavlos Tsatsoulis}
\date{}
\renewenvironment{abstract}
{
\begin{center}
\begin{minipage}{.9\textwidth}\small\textbf{Abstract}.\noindent \smallskip
}
{
\end{minipage}
\end{center}
}
\newenvironment{keywords}
{
\begin{center}
\begin{minipage}{.9\textwidth}\small\textbf{Keywords}:\noindent
}
{
\end{minipage}
\end{center}
}
\newenvironment{msc}
{
\begin{center}
\begin{minipage}{.9\textwidth}\small\textbf{MSC 2020}:\noindent
}
{
\end{minipage}
\end{center}
}
\begin{document}

\maketitle

\begin{abstract} 

We prove gradient bounds for the Markov semigroup of the dynamic $\varphi^4_2$-model
on a torus of fixed size $L>0$. For sufficiently large mass $m>0$ these estimates imply exponential contraction of the 
Markov semigroup. Our method is based on pathwise estimates of the linearized equation. To compensate the lack
of exponential integrability of the stochastic drivers we use a stopping time argument and the strong Markov property 
in the spirit of Cass--Litterer--Lyons \cite{CLL13}. Following the classical approach of Bakry-Émery, as a corollary we
prove a Poincar\'e/spectral gap inequality for the $\varphi^4_2$-measure of sufficiently large mass $m>0$ with almost optimal \emph{carr\'e du champ}.  
\end{abstract}

\begin{keywords} Gradient estimates, stopping time argument, spectral gap inequality, singular SPDEs. 
\end{keywords}

\begin{msc} 60H17, 47D07
\end{msc}

\tableofcontents 

\section{Introduction}

We consider the dynamic $\varphi^4_2$-model on the torus $\T^2=\R^2/L\Z^2$ of fixed size $L>0$ given by
\begin{equs}\label{eq:dynamic_phi4}
 \begin{cases}
 & (\partial_t - \Delta + m) u = - u^3 + 3 \infty u + \sqrt{2} \xi \quad \text{on} \ \R_{>0} \times \T^2,
 \\
 & u \big|_{t=0} = f,
 \end{cases}
\end{equs}
where $m>0$ is a positive mass, $\xi$ denotes space-time white noise and $f$ is a suitable initial condition. The infinite counter term
$+ 3 \infty u$ on the r.h.s.~of \eqref{eq:dynamic_phi4} is reminiscent of renormalization (see Section~\ref{sec:pre} below) since the SPDE
is singular due to the roughness of $\xi$. 

\medskip

This model serves as a toy example in the stochastic quantization of Euclidean quantum field theories. It describes the natural 
reversible dynamics of the $\varphi^4_2$-measure formally given by  
\begin{equs} \label{eq:phi4_formal}
 \nu(\dd u) = \frac{1}{\cZ} \exp\left\{-\int_{\T^2} \dd x  \left(\frac{1}{2} |\nabla u(x)|^2+ \frac{1}{4} |u(x)|^4 - \frac{3\infty}{2} |u(x)|^2\right)\right\} \dd u. 
\end{equs}
The construction of \eqref{eq:phi4_formal} was one of the first achievements in quantum field theory and goes back to 
Nelson \cite{Ne}. Alternatively, Parisi and Wu in \cite{PW81} proposed the use of \eqref{eq:dynamic_phi4} in order to construct
and sample via MCMC methods the measure \eqref{eq:phi4_formal}. A first attempt to implement this approach was made by Da 
Prato and Debussche in \cite{DPD03}. Later, along with the development of regularity structures \cite{Ha14} and paracontrolled calculus \cite{GIP15}, \eqref{eq:dynamic_phi4} was studied extensively by many authors, see for example \cite{MW17I, RZZ17I, RZZ17II, TW18, HMK18, MW17II, GH19, GH21}. 
These results justified rigorously the connection of the singular dynamics and the measure in the sense of Parisi and Wu.   

In the current work we study the regularization properties of the Markov semigroup $\{P_t\}_{t\geq0}$ associated to \eqref{eq:dynamic_phi4} (see \eqref{eq:markovsem} below for the definition) through gradient-type estimates.   
Gradient-type estimates of Markov semigroups are important in the study of functional inequalities, e.g. spectral gap (or infinite dimensional Poincar\'e) and $\log$-Sobolev inequalities, and transportation inequalities 
(see for example \cite{Ka06, BE85, BGL14, CG14}). These estimates usually require some convexity assumption, see for example \cite[Property (\textbf{H.C.K.}), p. 232 and p. 235]{CG14}. 
In the case of \eqref{eq:dynamic_phi4} convexity is destroyed by the presence of the infinite counter term $-3\infty u$ and at first glance it is unclear whether any type of such estimates can be derived. The argument 
we present here allows us to prove the following gradient estimate for the semigroup $\{P_t\}_{t\geq0}$. 

\begin{thm}\label{thm:weak_grad_est} 
Let $\{P_t\}_{t\geq0}$ be the Markov semigroup associated to \eqref{eq:dynamic_phi4} and $\kappa\in(0,1)$. 
For every $q>1$ and $\eps<1-\kappa$ there exists $m_*\equiv m_*(\eps,q,L)>0$ such that
\begin{equs}\label{eq:weak_grad_est}
 \|D P_tF(f)\|_{L^2_x} \leq C (t\wedge1)^{-\frac{\kappa+\eps}{2}} \ee^{-(m-m_*) t} \left(P_t\|DF\|_{H^{-\kappa}_x}^q(f)\right)^\frac{1}{q}, 
\end{equs}
for every cylindrical functional $F$, $t>0$, $f\in C^{-\al_0}$  and an implicit constant $C\equiv C(\eps,\kappa,q,L)<\infty$ which is uniform in $f$ and $m$. In the case $\kappa=0$ the estimate holds for $\eps=0$ and a universal constant $C$ which is independent of $L$.
\end{thm}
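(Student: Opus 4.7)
The plan is to represent the gradient of $P_tF$ through the linearized flow, derive a pathwise estimate for it that is sharp in both the short-time smoothing and the long-time contraction, and then cope with the lack of exponential integrability of the renormalized drivers via a stopping time argument in the spirit of Cass--Litterer--Lyons.

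As a first step, differentiating $P_tF(f)=\Exp[F(u_t^f)]$ in the initial condition and applying the chain rule gives, for any cylindrical $F$ and direction $h\in L^2$,
\begin{equs}
\skp*{DP_tF(f),h}_{L^2} = \Exp\pra*{\skp*{DF(u_t^f),J_t^f h}_{L^2}},
\end{equs}
where $v_t := J_t^f h$ solves the linearization of \eqref{eq:dynamic_phi4},
\begin{equs}\label{eq:plan:lin}
\bra*{\partial_t-\Delta+m+3\!:\!u^2\!:}v=0,\qquad v|_{t=0}=h.
\end{equs}
Combining the duality $\abs*{\skp*{DF(u_t),J_th}}\leq \norm*{DF(u_t)}_{H^{-\kappa}}\norm*{J_th}_{H^\kappa}$ with H\"older's inequality in $\Omega$ (exponents $q,q'$) reduces \eqref{eq:weak_grad_est} to the operator-norm moment bound
\begin{equs}\label{eq:plan:goal}
\sup_{\norm{h}_{L^2}\leq 1}\Exp\pra*{\norm*{J_t^fh}_{H^\kappa}^{q'}}^{1/q'} \leq C(t\wedge 1)^{-\frac{\kappa+\eps}{2}}\ee^{-(m-m_*)t},
\end{equs}
uniformly in $f\in C^{-\al_0}$.

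Next, I would derive a pathwise estimate for \eqref{eq:plan:lin}. Using the Da~Prato--Debussche splitting $u=Z+w$ with $Z$ the stationary Ornstein--Uhlenbeck process, the renormalized potential expands as $3(:\!Z^2\!:+2Zw+w^2)$, where $w$ enjoys better regularity and is controlled uniformly in time by the standard coming-down-from-infinity bounds for \eqref{eq:dynamic_phi4}. Testing \eqref{eq:plan:lin} with $v$, exploiting the non-negativity of the $w^2$-contribution and handling the distributional term $:\!Z^2\!:$ via paraproduct/Besov duality, yields a Gr\"onwall-type inequality
\begin{equs}
\frac{\dd}{\dd t}\norm*{v}_{L^2}^2 + \norm*{v}_{H^1}^2 + 2m\norm*{v}_{L^2}^2 \leq \Phi(t)\norm*{v}_{L^2}^2,
\end{equs}
where $\Phi$ aggregates suitable random norms of $:\!Z^2\!:$, $Z$ and $w$. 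Interpolating between $L^2$ and $H^1$ and using the parabolic smoothing of $-\Delta+m$ then upgrades this to
\begin{equs}\label{eq:plan:pathwise}
\norm*{v_t}_{H^\kappa}^2 \leq C(t\wedge 1)^{-\kappa-\eps}\exp\bra*{-2mt+\int_0^t\Phi(s)\,\dd s}\norm*{h}_{L^2}^2,
\end{equs}
the $\eps$-loss accounting for the mismatch between the regularity of the distributional potential and the target Sobolev scale.

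The main obstacle is that $\Phi$ has no finite exponential moments, since $:\!Z^2\!:$ sits in the second Wiener chaos. To circumvent this I would fix a small threshold $R>0$ and define recursively stopping times $\set*{\tau_k}_{k\geq 0}$ at which $\int_{\tau_{k-1}}^{\tau_k}\Phi(s)\,\dd s$ first reaches the level $R$. Iterating \eqref{eq:plan:pathwise} across the random intervals $[\tau_{k-1},\tau_k]$ and restarting the argument via the strong Markov property of $u$ at each $\tau_k$ produces
\begin{equs}
\norm*{J_t^f h}_{H^\kappa} \leq C(t\wedge 1)^{-\frac{\kappa+\eps}{2}}\ee^{-mt+RN_t}\norm*{h}_{L^2},\qquad N_t := \abs*{\set*{k:\tau_k\leq t}}.
\end{equs}
The remaining and most delicate ingredient is an exponential moment bound $\Exp\pra*{\ee^{q'RN_t}}^{1/q'}\leq C\ee^{m_*(R,q',L)\,t}$ with $m_*(R,q',L)\downarrow 0$ as $R\downarrow 0$, proved via stationarity of $Z$, the strong Markov property and Wiener-chaos tail estimates for $\Phi$. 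Choosing $R$ so that $m_*(R,q',L)$ matches the constant $m_*$ in the statement then yields \eqref{eq:plan:goal} and hence \eqref{eq:weak_grad_est}. The case $\kappa=0$ is substantially simpler: no smoothing prefactor is needed, the paraproduct estimates controlling the distributional potential decouple from the torus size, and the $w^2$-term can be dropped outright, producing the claimed universal constant.
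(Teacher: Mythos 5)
Your proposal follows essentially the same route as the paper: chain rule plus $H^{-\kappa}/H^{\kappa}$ duality and H\"older in $\Omega$ to reduce to a moment bound on the linearization $J$, a Da Prato--Debussche energy estimate with a random Gr\"onwall factor, a Cass--Litterer--Lyons-type restart argument whose counting process has exponential moments (using the strong Markov property and coming down from infinity for uniformity in $f$), and a parabolic upgrade from $L^2_x$ to $H^{\kappa}_x$ with the $(t\wedge1)^{-(\kappa+\eps)/2}$ loss. One caveat: your claim that the rate $m_*(R,q',L)$ tends to $0$ as $R\downarrow 0$ is backwards (shrinking the threshold makes the restarts more frequent and the exponential growth rate of $\Exp\,\ee^{q'RN_t}$ blows up); this does not affect the theorem, since one only needs to fix $R$ small enough (e.g. $\ee^{q'R}$ below the reciprocal of the restart probability bound) to obtain some finite $m_*$, exactly as the paper fixes $\theta$.
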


Replacing $L^2_\omega$-norm on the r.h.s.~by an $L^1_\omega$-norm yields the strong gradient estimate \cite[Theorem~3.2.4]{BGL14}. The main difference is that  the strong gradient estimate 
implies the $\log$-Sobolev inequality, while \eqref{eq:weak_grad_est} the (weaker) spectral gap inequality (see for example \cite[Section 1]{CG14} and \cite[Sections~4 and 5]{BGL14}).  
Note that in contrast to the classical literature here we insist on a gradient estimate where the r.h.s.~depends on the 
$H^{-\kappa}_x$-norm, allowing for $\kappa$ arbitrarily close to $1$. This is almost in line with the behaviour of the Gaussian free 
field in dimension $2$ where the \emph{carr\'e du champ} is given by the $H^{-1}_x$-inner product or, equivalently, its \emph{Cameron-Martin space} 
is given by $H^1_x$. As an immediate consequence \eqref{eq:weak_grad_est} implies exponential contraction for $m>m_*$ in the following sense,  
\begin{equs}
 \sup_{\|h\|_{L^2_x}\leq 1}\sup_{\|DF\|_{H^{-\kappa}_x\leq 1}} |P_tF(f+h) - P_tF(f)| \leq C (t\wedge1)^{-\frac{\kappa}{2}-\eps} \ee^{-(m-m_*)t}, \label{eq:expcontraction}
\end{equs}
where the second supremum is taken over all cylindrical functionals $F$.  

\medskip

In recent years gradient-type estimates of the form \eqref{eq:weak_grad_est} have seen a rise in popularity. Starting with the work of Bakry--Émery \cite{BE85} it has become a vast research topic to relate these estimates to lower bounds of the Ricci curvature of the associated manifold. 
Since the interpretation of the heat flow on a manifold as a formal gradient flow with respect to the entropy on the Wasserstein space \cite{O01}, the notion of displacement convexity of the entropy is also closely related to lower bounds of the Ricci curvature \cite{OV00}. This relationship can be associated to exponential contraction of the heat flow with respect to the Wasserstein metric which in our case corresponds to \eqref{eq:expcontraction}. Indeed, in \cite{RS05} it has been shown that in the finite-dimensional case all these notions are equivalent. In the infinite-dimensional setting we, for example, refer to \cite{EKS15}.

\medskip
 
In order to prove \eqref{eq:weak_grad_est} we study the linearized equation 
\begin{equs}\label{eq:linearized_phi4}
 \begin{cases}
 & (\partial_t - \Delta + m) J_{0,t}^fh = -3 \big(u^2 - \infty \big) J_{0,t}^fh \quad \text{on} \ \R_{>0} \times \T^2,
 \\
 & J_{0,t}^fh \big|_{t=0} = h,
 \end{cases}
\end{equs}
for suitable initial condition $h$. In the absence of the counter term one easily obtains a contraction estimate for any $m>0$ of the form
\begin{equs}
 \|J_{0,t}^fh\|_{L^2_x}^2 \leq \ee^{-2mt} \|h\|_{L^2_x}^2, \label{eq:L2_est}
\end{equs}
which in turn implies the strong gradient estimate, see for example \cite[Lemma 2.1]{Ka05} where the same dynamics are considered in the $1$-dimensional setting on the whole space\footnote{Using a post-processing of 
\eqref{eq:L2_est} as in Proposition~\ref{betterboundJ} below one can upgrade the $L^2_x$-estimate to an $H^{-\kappa}_x$-estimate for $\kappa\in[0,1)$ in the case of the torus.}. 
To deal with the counter term we appeal to the Da Prato--Debussche decomposition (see Section~\ref{sec:pre} below), understanding $u^2 - \infty$ as 
\begin{equs}
 u^2 - \infty = v^2 + 2 v \<1> + \<2> + c_{t, \infty}, \label{eq:drift_decomp} 
\end{equs}
where $\<1>$ is the solution to the stochastic heat equation \eqref{eq:she} with zero initial data, $\<2>$ its second Wick power defined in \eqref{stochest} and $c_{t,\infty}$ the constant defined in \eqref{renormconst}\footnote{The
constant $c_{t, \infty}$ appears due to the fact that we insist on using Wick powers of $\<1>$ which at time $t=0$ vanish. This is just a technical convenience but not necessary in our approach.}.
The idea is to treat the lower order terms in \eqref{eq:drift_decomp}, namely  $2 v \<1> + \<2> + c_{t, \infty}$, as drift terms and absorb them to the mass $m$. 
Due to the lack of the required exponential integrability, in order to obtain a meaningful gradient estimate we restart the noise every time the Wick powers exceed a certain barrier
using a stopping time argument in the spirit of Cass--Litterer--Lions \cite{CLL13} for rough differential equations (see Section~\ref{sec:stochasticest} below). This argument allows us to bypass the problem of exponential integrability 
of the Wick powers. Instead, we need to study the exponential integrability of the counting process $N(t)$ of the number of restarts to reach time $t$ which due to the strong Markov property has exponential tails (see Proposition~\ref{expmoments}
below). A crucial ingredient to our approach is the \emph{``coming down from infinity"} property of $v$ first obtained in \cite[Proposition~3.7]{TW18} (see also \cite{MW17II, MW20, GH19} for up-to-date results on ``coming down from infinity"), 
which ensures that the estimates on $N(t)$ do not depend on the initial data $f$, therefore, covering uniformly the whole time interval $[0,\infty)$. As a result of the stopping time argument we prove the following $L^2_x$-estimate for every $p<\infty$, 
\begin{equs}
 \E{\|J_{0,t}^f\|_{L^2_x\to L^2_x}^{p}}^{\frac{1}{p}} \leq C \ee^{-(m-m_*)t}  
\end{equs}
for some $m_*>0$ and $C<\infty$ uniformly in $f$, see Proposition~\ref{boundJ} . 
Using a simple post-processing we can upgrade the above estimate to
\begin{equs}
  \E{\|J_{0,t}^f\|_{L^2_x\to H^{\kappa}_x}^{p}}^{\frac{1}{p}} \leq C (t\wedge1)^{-\frac{\kappa+\eps}{2}} \ee^{-(m-m_*)t},  \label{eq:H1_est}
\end{equs}
see Proposition~\ref{betterboundJ}.

\medskip

As we already mentioned earlier, the motivation to study gradient-type estimates for Markov semigroups comes from applications on functional inequalities. As a consequence of \eqref{eq:weak_grad_est} 
we derive a spectral gap inequality for the Markov semigroup $\{P_t\}_{t\geq0}$ based on the celebrated method of Bakry--Émery. Due to the presence of the $H^{-\kappa}_x$-norm for $\kappa$ arbitrarily close to $1$
the carr\'e du champ is almost optimal when compared to the small scale behaviour of the Gaussian free field in $2$-dimensions on a torus of fixed size $L>0$ (which plays the role of an infra-red cutoff).  

\begin{thm} \label{thm:dynamic_phi4_sg}
Under the assumptions of Theorem~\ref{thm:weak_grad_est} the following spectral gap inequality holds
\begin{align}
 P_tF^2(f) - \big(P_t F(f)\big)^2 \leq C  \int_0^t (s\wedge 1)^{-\kappa-\eps} \ee^{-2(m-m_*)s} \, \dd s \ P_t\|FG\|_{H^{-\kappa}_x}^2(f) \quad \nu\text{-a.s. in} \ f, \label{eq:dynamic_phi4_sg}
\end{align}
for every cylindrical functional $F$, $t>0$ and implicit constant $C\equiv C(\eps,\kappa,L)<\infty$ which is uniform in $f$ and $m$.  In the case $\kappa=0$ the estimate holds for $\eps=0$ and a universal constant $C$ which is independent 
of $L$.
\end{thm}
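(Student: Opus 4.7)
The plan is to derive \eqref{eq:dynamic_phi4_sg} from the gradient estimate \eqref{eq:weak_grad_est} via the classical \textbf{Bakry--Émery interpolation} argument applied to the Markov semigroup $\{P_t\}_{t\geq0}$. Fix $t>0$, a cylindrical functional $F$, and an admissible initial condition $f$. I introduce the interpolating function
\begin{equs}
\phi(s) := P_s\bigl((P_{t-s}F)^2\bigr)(f), \qquad s\in[0,t],
\end{equs}
which satisfies $\phi(0)=(P_tF(f))^2$ and $\phi(t)=P_tF^2(f)$, so that the left-hand side of \eqref{eq:dynamic_phi4_sg} equals $\int_0^t \phi'(s)\,\dd s$.

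The heart of the proof is to identify
\begin{equs}
\phi'(s) = 2\,P_s\bigl(\|DP_{t-s}F\|_{L^2_x}^2\bigr)(f), \label{eq:phiprime_plan}
\end{equs}
the standard \emph{carré du champ} identity for a diffusion with additive noise: in the formal expression $LG^2-2G\,LG$ the drift terms cancel and only the covariance $2\,\id_{L^2_x}$ of the space-time white noise survives. To make this rigorous in the presence of the counter term $+3\infty u$, I would first perform the computation along a spectral Galerkin truncation $\{P_t^N\}_{t\geq 0}$ of \eqref{eq:dynamic_phi4}, for which the generator acts classically on cylindrical functionals and Itô's formula applied to $s\mapsto(P_{t-s}^NF)^2$ yields the truncated analogue of \eqref{eq:phiprime_plan} pointwise in~$f$. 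I would then pass to the limit $N\to\infty$ by combining convergence of the truncated semigroups on cylindrical $F$ with the uniformity of \eqref{eq:weak_grad_est} in the truncation parameter. This uniformity is inherited from the pathwise stopping-time argument of Section~\ref{sec:stochasticest} applied to the truncated Jacobian, and providing it is the step I expect to require the most care.

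Once \eqref{eq:phiprime_plan} is in hand the remainder is book-keeping. Specializing \eqref{eq:weak_grad_est} to $q=2$ (which is admissible since the assumption is $q>1$) gives, pointwise in $u$,
\begin{equs}
\|DP_{t-s}F(u)\|_{L^2_x}^2 \leq C^2\,((t-s)\wedge 1)^{-(\kappa+\eps)}\,\ee^{-2(m-m_*)(t-s)}\,P_{t-s}\|DF\|_{H^{-\kappa}_x}^2(u).
\end{equs}
Applying $P_s$ to both sides and invoking the semigroup identity $P_sP_{t-s}=P_t$ yields
\begin{equs}
\phi'(s) \leq 2C^2\,((t-s)\wedge 1)^{-(\kappa+\eps)}\,\ee^{-2(m-m_*)(t-s)}\,P_t\|DF\|_{H^{-\kappa}_x}^2(f).
\end{equs}
Integrating over $s\in[0,t]$ and changing variables $s\mapsto t-s$ produces \eqref{eq:dynamic_phi4_sg} with constant $2C^2$. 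The special case $\kappa=\eps=0$ with $L$-independent constant follows analogously from the universal version of \eqref{eq:weak_grad_est}, and the ``$\nu$-a.s.'' qualifier simply accommodates the fact that $P_tF(f)$ is defined only on the (full $\nu$-measure) set on which the SPDE starting from $f$ is well posed.
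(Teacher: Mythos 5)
Your reduction of the theorem to the derivative identity $\tfrac{\dd}{\dd s}P_s\bigl((P_{t-s}F)^2\bigr)=2P_s\bigl(\|DP_{t-s}F\|_{L^2_x}^2\bigr)$ is, up to the change of variables $s\mapsto t-s$, exactly the paper's argument: the paper isolates this as the Bakry--\'Emery identity \eqref{firstident} (\cref{BEtrick}) and then, as you do, applies Theorem~\ref{thm:weak_grad_est} with $q=2$, uses $P_{t-s}P_s=P_t$, and integrates in $s$. So the bookkeeping half of your proposal coincides with the paper's proof.

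The divergence, and the gap, is in how the \emph{carr\'e du champ} identity itself is justified --- which is the only non-trivial content of this proof once Theorem~\ref{thm:weak_grad_est} is available. You propose a spectral Galerkin truncation plus passage to the limit, but this is only announced, not carried out: you would need convergence of the truncated semigroups to $P_t$ on cylindrical functionals, and gradient/stopping-time estimates uniform in the truncation parameter; the latter is not a routine rerun of \cref{sec:stochasticest}, since the truncated Wick powers, the renormalization constants and the ``coming down from infinity'' input all depend on the cutoff and would have to be controlled uniformly. The paper instead proceeds via Dirichlet form theory: by \cite{RZZ17I, AR91} the form $\mathcal{E}(F,F)=\int\|DF\|_{L^2_x}^2\,\dd\nu$ is closable and quasi-regular, $P_t$ agrees $\nu$-a.s.\ with the associated semigroup, and the identity \eqref{firstident} is then obtained from the Fukushima decomposition together with It\^o's formula, the quadratic variation of the martingale part of $P_rF(u^f_\cdot)$ being $\int_0^\cdot\|DP_rF(u^f_s)\|_{L^2_x}^2\,\dd s$ --- this is the rigorous version of your formal statement that the drift cancels and only the noise covariance survives. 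Note also that the ``$\nu$-a.s.\ in $f$'' qualifier arises precisely from this $\nu$-a.s.\ identification of $P_t$ with the Dirichlet-form semigroup, not, as you suggest, from well-posedness of the SPDE, which holds for every $f\in C^{-\alpha_0}$.
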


Let us mention that a spectral gap-type inequality for the Markov semigroup generated by \eqref{eq:dynamic_phi4} has
already been obtained in \cite{TW18} in the total variational norm in $C^{-\alpha_0}$ based on a combination of the strong Feller 
property, a support theorem and the ``coming down from infinity" property. The same holds in dimension $3$ based on 
the results from \cite{HM18, HS21, MW17II}. Although the total variational norm is stronger than any
Wasserstein metric, the results in \cite{TW18} do not provide an estimate w.r.t.~the $L^2_x$-derivative.

\medskip

Using the ergodicity of $P_t$, see for example \cite[Corollary 6.6]{TW18}, as a corollary we prove a spectral gap inequality for 
the $\varphi^4_2$-measure for large masses $m>m_*$. 

\begin{cor}\label{cor:phi4_sg} Under the statement of Theorem~\ref{thm:dynamic_phi4_sg} and the additional assumption $m>m_*$ the $\varphi^4_2$-measure satisfies the spectral gap inequality
\begin{equs}
 \Exp_\nu F^2 - \big(\Exp_\nu F\big)^2 \leq C \frac{1}{(m-m_*)^{1-\kappa-\eps}\wedge (m-m_*)} \Exp_\nu\|DF\|_{H^{-\kappa}_x}^2, \label{eq:phi4_sg}
\end{equs}
for every cylindrical functional $F$, where for $\kappa=0$ the estimate holds for $\eps=0$.
\end{cor}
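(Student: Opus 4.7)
The plan is to follow the classical Bakry--Émery strategy: integrate the pointwise spectral gap estimate \eqref{eq:dynamic_phi4_sg} against the invariant measure $\nu$ and send $t\to\infty$, using that for $m>m_*$ the time-dependent prefactor in \eqref{eq:dynamic_phi4_sg} is integrable at infinity. Applying $\Exp_\nu$ to both sides, invariance of $\nu$ under $\{P_t\}_{t\geq 0}$ collapses $\Exp_\nu P_tF^2$ to $\Exp_\nu F^2$ and $\Exp_\nu P_t\|DF\|_{H^{-\kappa}_x}^2$ to the $t$-independent quantity $\Exp_\nu\|DF\|_{H^{-\kappa}_x}^2$. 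The variance-like term $\Exp_\nu(P_tF)^2$ then has to be sent to $(\Exp_\nu F)^2$ by ergodicity of $\nu$, see \cite[Corollary~6.6]{TW18}: for cylindrical $F$ one has $P_tF(f)\to\Exp_\nu F$ pointwise $\nu$-a.s., and a standard uniform integrability argument (using moment bounds for $\nu$ or the boundedness of cylindrical $F$) allows one to pass the limit through the square.

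It then remains to control the time integral
\[
I(m) := \int_0^\infty (s\wedge 1)^{-\kappa-\eps}\,\ee^{-2(m-m_*)s}\,\dd s
\]
by the stated function of $m-m_*$. I would split at $s=1$: the tail $s\geq 1$ is at most $(2(m-m_*))^{-1}\ee^{-2(m-m_*)}$, hence of order $(m-m_*)^{-1}$, while for $s\in(0,1]$ the substitution $u=2(m-m_*)s$ yields
\[
\int_0^1 s^{-\kappa-\eps}\,\ee^{-2(m-m_*)s}\,\dd s = (2(m-m_*))^{-(1-\kappa-\eps)}\int_0^{2(m-m_*)} u^{-\kappa-\eps}\,\ee^{-u}\,\dd u,
\]
which is $\lesssim(m-m_*)^{-(1-\kappa-\eps)}\Gamma(1-\kappa-\eps)$ in the large-$m$ regime, while in the small-$m$ regime one simply dominates $\ee^{-2(m-m_*)s}\leq 1$ and obtains a constant that is absorbed into $(m-m_*)^{-1}$. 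Taking the worst of the two regimes yields the announced bound $I(m)\lesssim ((m-m_*)^{1-\kappa-\eps}\wedge(m-m_*))^{-1}$; in the borderline case $\kappa=\eps=0$ the integral equals $(2(m-m_*))^{-1}$ exactly, matching the improved constant claimed in that case.

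The only non-routine step is the $L^2(\nu)$-passage to the limit inside the square, which is mild given the ergodicity of $\{P_t\}_{t\geq 0}$ and standard moment estimates for $\nu$; everything else reduces to the invariance of $\nu$ and elementary calculus.
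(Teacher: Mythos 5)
Your proposal is correct and follows essentially the same route as the paper: combine the semigroup spectral gap of Theorem~\ref{thm:dynamic_phi4_sg} with the bound $\int_0^\infty (s\wedge1)^{-\kappa-\eps}\ee^{-2(m-m_*)s}\,\dd s\lesssim \big((m-m_*)^{1-\kappa-\eps}\wedge(m-m_*)\big)^{-1}$ and pass to the limit $t\nearrow\infty$ via the ergodicity result \cite[Corollary~6.6]{TW18}. The only (harmless) cosmetic difference is that you integrate against $\nu$ and use invariance for the outer $P_t$-terms, handling only $\Exp_\nu(P_tF)^2$ by ergodicity plus dominated convergence, whereas the paper takes the $t\nearrow\infty$ limit pointwise $\nu$-a.s.\ in $f$ for all three terms.
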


\begin{rem}
We emphasize that in order to obtain \eqref{eq:phi4_sg} we need to choose $m$ large enough and, in particular, $m>m_*$ to ensure that the spectral gap constant does not blow-up in the
limit $t\nearrow\infty$. This is a technical restriction of the method presented here and it is rather unnatural in the case of the torus. On the other hand, such a condition would be natural in the whole plane regime, 
provided that the dependence of the implicit constant $C$ and the mass $m_*$ on $L$ can be eliminated. As we already stated in Theorem~\ref{thm:weak_grad_est}, $C$ does not depend on $L$ for $\kappa=0$
and it would be interesting to investigate whether the dependence of $m_*$ on $L$ can be eliminated as well to allow for a large scale analysis. At first sight this seems possible using suitable weighted norms 
(in the spirit of \cite{MW17I, GH19}), but it is rather unclear whether one can derive meaningful estimates in this direction. 
\end{rem}     

Spectral gap inequalities are a convenient tool which quantifies ergodicity. When it comes to applications beyond the 
study of long time behavior, they have been used in the context of stochastic homogenization \cite{GO11} to obtain stochastic 
estimates on the corrector. In a similar spirit, spectral gap inequalities can be used as a tool in deriving stochastic estimates
in the context of singular SPDEs \cite{LOTT21} (see also \cite[Section 5]{IORT20} for a simpler example). 
 
\medskip 

While completing this work, a relevant work \cite{BD22} appeared, which derives $\log$-Sobolev inequalities for the 
$\varphi^4$-measure in dimensions $2$ and $3$ with carr\'e du champs given by the $L^2_x$-norm. More precisely, 
the authors study approximations of the measure with ultraviolet and infra-red cutoffs and derive lower and upper bounds
on the $\log$-Sobolev constant independent of the cutoffs. Their approach is based on the machinery developed in \cite{BB21} in combination with correlation inequalities. Although these results are optimal in the large scale regime and they imply the spectral gap inequality, the 
techniques presented here are more appropriate in the small scale regime. 

\subsection{Notation}\label{sec:not}
For $\beta \in \R$ we set $C^{\beta} := B^{\beta}_{\infty, \infty}(\T^2)$ and the corresponding norm is denoted by $\norm*{\cdot}_{\beta}$. The space of arbitrarily smooth functions is accordingly denoted by $C^{\infty}$. 
We set $L^p_x := L^p(\T^2)$ and  $\norm*{\cdot}_{L^p_x}$ for the corresponding norm. Similarly, we use the same notation for $L^2_{t, x} := L^2(\R_{+} \times \T^2)$ and $H^{\al}_x := H^{\al}(\T^2)$. Note that we have $L^p_x = B^{0}_{p, p}(\T^2)$ and $H^{\al}_{x} = B^{\al}_{2, 2}(\T^2)$. The space $\mathcal{F}C^{\infty}_b$ denotes all cylindrical functions, i.e. for a distribution $u$ we have $F \in \mathcal{F}C^{\infty}_b$ if there exists $n \in \N$, $\bar{F} \in C^{\infty}_b\bra*{\R^n}$ and $h_i \in C^{\infty}(\T^2)$ for $i = 1, \dots, n$ such that $F(u) = \bar{F}(u(h_1), \dots, u(h_n))$ where we write $u(h) := \int_{\T^2} u(x) h(x) \dx{x}$ for the natural pairing.
Moreover, $a \wedge b := \min\set*{a, b}$.

\subsection{Outline}
In \cref{sec:pre} we recall the Da Prato--Debussche ansatz for \eqref{eq:dynamic_phi4} including the construction and regularity of the Wick powers. In \cref{sec:sop} we outline the ideas needed in order to prove our main theorem. This includes the $L^2_x$-energy estimate of the solution to the linearized equation \eqref{eq:linearized_phi4}, the stopping time argument that we employ in order to bypass the problem of exponential integrability of the Wick powers and finally the upgrade to an $H^{\kappa}_x$-estimate. In \cref{sec:sgi} we prove the spectral gap inequalities \cref{thm:dynamic_phi4_sg} and \cref{cor:phi4_sg}. In \cref{sec:proofs} we include the intermediate proofs of the $L^2_x$- and $H^{\kappa}_x$-estimate. Finally, in the appendix we gather some auxiliary results which are partially known in the literature.

\subsection*{Acknowledgments}
The authors thank F. Otto for discussions and comments. PT would like to thank the Max-Planck-Institute for Mathematics in the Sciences for its warm hospitality.

\section{General framework}\label{sec:pre}

We denote by $\<1>_{0,t}$ the solution to the stochastic heat equation
\begin{align}\label{eq:she}
   \begin{cases}
    \bra*{\partial_t - \Delta + m} \<1> = \sqrt{2}\xi \quad \text{on} \ \R_{>0} \times \T^2 \\
    \<1>|_{t=0} = 0,
   \end{cases}
\end{align}
which is explicitly given by 
\begin{align}
    \<1>_{0,t}(\varphi) = \sqrt{2} \xi\bra*{\mathbbm{1}_{[0, t)} H_{t - \cdot} \ast \varphi}
\end{align}
for all sufficiently nice test functions $\varphi : \T^2 \to \R$ where $(t, x) \mapsto H_t(x)$ denotes the heat kernel associated with the operator $\bra*{\partial_t - \Delta + m}$.
We also denote by $\<2>_{0,t}$ and $\<3>_{0,t}$ its second and third Wick powers defined as the limits
\begin{equs}
 \<2>_{0,t} := \lim_{\delta\searrow 0} \left(\<1>_{0,t}^{(\delta)}\right)^2 - c_{0,t}^{(\delta)}, \quad  \<3>_{0,t} :=
 \lim_{\delta\searrow 0} \left(\<1>_{0,t}^{(\delta)}\right)^3 - 3 c_{0,t}^{(\delta)} \<1>_{0,t}^{(\delta)}, \label{eq:wickpowers}   
\end{equs}
where $c_{0,t}^{(\delta)} = \Exp\left(\<1>_{0,t}^{(\delta)}(0)\right)^2$, $\delta$ denotes some space mollification and the convergence takes place in $C^{-\alpha}$ for every $\alpha>0$. For simplicity, we write $\<k>_{0,t}$, $k=1,2,3$, to denote the collection of $\<1>_{0,t}$, $\<2>_{0,t}$, $\<3>_{0,t}$. We are only interested in the analytical properties of the Wick powers $\<k>_{0,t}$, $k=1,2,3$, given by the next proposition.

\begin{prop}\label{stochest}
Let $T>0$. For any $k = 1, 2, 3$, $\alpha>0$ and $p < \infty$ we have
\begin{align}\label{eq:stochest}
    \E{\sup_{0 \leq t \leq T} \norm*{\<k>_{0,t}}_{-\al}^p}^{\frac{1}{p}} \leq C 
\end{align}
where the constant $C \equiv C(L, T, \al, p)$ does not depend on $m$, vanishes for $T\searrow 0$ and grows at most polynomially in $T$.
\end{prop}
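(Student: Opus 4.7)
The plan is to establish the estimate first for a smooth mollification $\<k>_{0,t}^{(\delta)}$ of the Wick power, with a bound uniform in $\delta$, and then pass to the limit $\delta \searrow 0$ using the convergence built into the definition \eqref{eq:wickpowers}. The crucial structural fact is that each mollified Wick power belongs to the $k$-th inhomogeneous Wiener chaos generated by $\xi$, so Nelson's hypercontractivity furnishes the equivalence $\E{|X|^q}^{1/q} \leq C_{q,k} \E{|X|^2}^{1/2}$ for every $X$ in this chaos. This reduces all moment bounds to second-moment computations, which can be carried out explicitly in Fourier on the torus.

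I would begin by deriving the spatial estimate at a fixed time $t$. Fix $\alpha_0 \in (0, \alpha)$ and $q < \infty$ large enough that the Besov embedding $B^{-\alpha_0}_{q,q}(\T^2) \hookrightarrow B^{-\alpha}_{\infty,\infty}(\T^2)$ holds, and bound $\norm*{\<k>_{0,t}^{(\delta)}}_{-\alpha}$ by $\norm*{\<k>_{0,t}^{(\delta)}}_{B^{-\alpha_0}_{q,q}}$. After raising to the $q$-th power, Fubini and hypercontractivity reduce the computation to dyadic block second moments $\E{|\Delta_j \<k>_{0,t}^{(\delta)}(x)|^2}$, which can be evaluated in Fourier via the explicit covariance
\begin{equs}
    \E{|\hat{\<1>}_{0,t}(k)|^2} = \frac{1 - \ee^{-2(|k|^2+m)t}}{|k|^2+m} \leq \min\bra*{2t, \frac{1}{|k|^2+m}}.
\end{equs}
For $k = 2, 3$ the corresponding multilinear expansions (after subtracting the Wick counterterms) reduce the second moments to convolution sums of these heat-kernel factors, which yield bounds of order $2^{2j\alpha_0}$ uniformly in $m > 0$ and polynomially in $t$. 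Summing against the dyadic weights $2^{-j\alpha q}$ completes the fixed-time estimate.

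To promote the fixed-time bound to control of the supremum over $t \in [0, T]$, I would apply Kolmogorov's continuity criterion to the $C^{-\alpha}$-valued process $t \mapsto \<k>_{0,t}^{(\delta)}$. The same Fourier machinery applied to differences $\<k>_{0,t}^{(\delta)} - \<k>_{0,s}^{(\delta)}$, exploiting the smoothness of the heat semigroup in time, yields a Hölder-type bound
\begin{equs}
    \E{\norm*{\<k>_{0,t}^{(\delta)} - \<k>_{0,s}^{(\delta)}}_{-\alpha}^q}^{1/q} \leq C |t-s|^\gamma
\end{equs}
for some $\gamma > 0$ and $q$ large enough that $\gamma q > 1$. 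Since $\<k>_{0,0}^{(\delta)} = 0$ by construction, this Hölder modulus together with Kolmogorov's theorem produces the supremum estimate and, crucially, both the polynomial dependence on $T$ and the vanishing as $T \searrow 0$.

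The main obstacle is the Fourier-space second-moment computation for $k = 2, 3$: one has to verify that the convolution sums over frequency pairs and triples remain controlled uniformly in the mollification $\delta$, uniformly in the mass $m > 0$, and with only polynomial growth in $t$. The delicate points are the zero mode $k = 0$, where $\E{|\hat{\<1>}_{0,t}(0)|^2} = (1 - \ee^{-2mt})/m$ must be dominated by $\min(2t, 1/m)$ to avoid blow-up as $m \searrow 0$, and the near-divergence of the convolution sums for $\<3>$, which is absorbed precisely by the arbitrarily small regularity loss $\alpha > 0$.
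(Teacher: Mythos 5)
Your argument is correct, but it is genuinely different from the proof given in the paper. You follow the classical route: the mollified Wick powers live in a fixed inhomogeneous Wiener chaos, so Nelson's hypercontractivity reduces all $L^p_\omega$-bounds to second moments, which you compute explicitly in Fourier via the covariance bound $\frac{1-\ee^{-2(|k|^2+m)t}}{|k|^2+m}\leq\min\bra*{2t,\frac{1}{|k|^2+m}}$ (this, together with your treatment of the zero mode, is exactly what delivers uniformity in $m$), then pass through a Besov embedding $B^{-\alpha_0}_{q,q}\hookrightarrow B^{-\alpha}_{\infty,\infty}$ and Kolmogorov's continuity criterion in time, with the vanishing as $T\searrow 0$ and the polynomial growth in $T$ coming from the Hölder modulus and the fact that the Wick powers vanish at $t=0$. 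The paper deliberately avoids hypercontractivity: it only uses that the white noise satisfies the spectral gap (Poincaré) inequality \eqref{spg}, estimates the Malliavin derivative of the Wick powers by duality against arbitrary $\delta\xi\in L^q_\omega L^2_{t,x}$, runs an induction in the chaos order through commutator estimates with the convolution semigroup $(\cdot)_\lambda$ (in the spirit of \cite{IORT20,LOTT21}), and only then invokes the Kolmogorov-type criterion of \cite[Lemma 10]{MW17I}; uniformity in $m$ is tracked through estimates such as \eqref{eq:uniform_m}. Your approach is more elementary and computationally explicit, but it leans on the exact Gaussian chaos structure of $\xi$; the paper's approach requires only the spectral gap assumption on the noise, which is the point of presenting it as an alternative and ties in with the functional-inequality theme of the paper. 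Both yield the same conclusion, including the $m$-independence of the constant, so there is no gap — just make sure, when you carry out the convolution sums for the second and third Wick powers, that the counterterm $c^{(\delta)}_{0,t}$ you subtract is the time-dependent one from \eqref{eq:wickpowers}, so that the processes indeed vanish at $t=0$ as your Kolmogorov step requires.
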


We postpone the proof of this proposition in the appendix, Section~\ref{sec:stochest}, where we present an alternative argument in the spirit of \cite[Section~5]{IORT20} and \cite{LOTT21} using
the fact that the white noise $\xi$ satisfies a spectral gap inequality. Note that we stress the independence of the constant $C$ on $m$, which allows us to ensure that $m_*$ in 
Theorem~\ref{thm:weak_grad_est} is independent of $m$ (in particular, $\theta$ in Proposition~\ref{expmoments} can be chosen independently of $m$). 

\medskip

We interpret the solution $u$ of \eqref{eq:dynamic_phi4}  using the Da Prato--Debussche decomposition \cite{DPD03}, namely, we define $u_{0,t}:=\<1>_{0,t} + v_{0,t}$, where
\begin{align}\label{eq:remainder}
        \begin{cases}
        \bra*{\partial_t - \Delta + m} v_{0,t} = - v_{0,t}^3 - 3v_t^2 \<1>_{0,t} - 3 v_{0,t} \<2>_{0,t} - \<3>_{0,t} + 3 c_{t, \infty} \bra*{\<1>_{0,t} + v_{0,t}} \\
        v|_{t=0} = f,
    \end{cases}
\end{align}
where $f\in C^{-\alpha_0}$ for $\alpha_0>0$ sufficiently small. Let us remark on the constant $c_{t,\infty}$ which appears on the r.h.s. of \eqref{eq:remainder}. This is due to the fact that we renormalize the Wick powers via time dependent constants in order for them to vanish at time $t=0$, although renormalization on the level of the dynamics is done via a time-independent constant $c_{0,\infty}^{(\delta)}$ to ensure that the resulting Markov 
processes is homogeneous in time. In the limit $\delta \searrow 0$ the difference between the two constants leads to
\begin{equs}\label{renormconst}
 c_{t,\infty} := 2 \int_t^\infty H_{2s}(0) \, \dd s \lesssim t^{-\frac{\beta}{2}}
\end{equs}
for every $\beta>0$. A crucial ingredient that we use in the sequel is the ``coming down from infinity" for the solution $v_{0,t}$ to \eqref{eq:remainder} which we include in the appendix,
\cref{sec:remv}. We refer the reader to \cite{DPD03,MW17I,TW18} for details on the global well-posedness of \eqref{eq:remainder}. 

\medskip

For $t \geq s$ we also consider the restarted processes $\<k>_{s, t}$, $k=1,2,3$ which are defined via the solution to
\begin{align}
    \begin{cases}
        \bra*{\partial_t - \Delta + m}\<1>_{s, t} = \sqrt{2} \xi  \quad \text{on} \ \R_{>0} \times \T^2, \\
        \<1>_{s, t}|_{t=s} = 0, 
    \end{cases}
\end{align}
and respectively via \eqref{eq:wickpowers} with $\<1>_{0,t}^{(\delta)}$ replaced by $\<1>_{s,t}^{(\delta)}$. Note that $\{\<k>_{0,t}\}_{t\geq 0}$ and $\{\<k>_{s,t}\}_{t\geq s}$
are equal in law and $\{\<k>_{s,t}\}_{t\geq s}$ is independent of $\{\<k>_{0,t}\}_{t\in [0,s]}$. 

\medskip

Similarly, we consider $v_{s, t}$ which is defined as the solution to
\begin{align}
    \begin{cases}
        \bra*{\partial_t - \Delta +m}v_{s, t} = - v_{s, t}^3 - 3 v_{s, t}^2 \<1>_{s, t} - 3 v_{s, t} \<2>_{s, t } - \<3>_{s, t} +3 c_{t-s, \infty}\bra*{\<1>_{s, t} + v_{s, t}}, \\
        v_{s, t}|_{t=s} = u_s.
    \end{cases}
\end{align}
Note that all pathwise and stochastic estimates for $\<k>_{0,t}$, $k=1,2,3$, and $v_{0,t}$ extend to $\<k>_{s,t}$, $k=1,2,3$, and $v_{s,t}$. Especially, 
due to the ``coming down from infinity" property 
pathwise estimates on $v_{s,t}$ do not depend on $u_s$. 

\section{Strategy of the proof}\label{sec:sop}

In this section we want to give an outline of the proof of \cref{thm:weak_grad_est}. 
By \cite[Theorem 4.2]{TW18} for $f \in C^{-\al_0}$ we know that $\set*{u^f_{0,t}}_{t \geq 0}$ is a Markov process with $u^f_{0,t}|_{t=0} = f$. In particular, for $F \in \mathcal{F}C^{\infty}_b $ the operator
\begin{align}\label{eq:markovsem}
    P_tF(f) := \E{F(u_{0,t}^f)}
\end{align}
yields a one-parameter semigroup. 
We denote by $D$ the $L^2-$derivative, i.e. we have
\begin{align}\label{uderF}
    D F(u) = \sum_{i=1}^n \partial_i \bar{F}(u(h_1), \dots, u(h_n)) h_i.
\end{align}
The implicit function theorem implies that the map
\begin{align}\label{diffv}
    f \mapsto v^f
\end{align}
is differentiable and for any $h \in C^{\infty}$ it holds that $J^f_{0, t}h := v_{0,t}'(f).h$ is a (mild) solution of the equation 
\begin{align}
    \begin{cases}
        (\partial_t - \Delta + m)J^f_{0, t}h = -3 \bra*{\bra*{v^f_{0, t}}^2 + 2 v^f_{0, t} \<1>_{0, t} + \<2>_{0, t}}J^f_{0, t}h + 3 c_{t, \infty} J^f_{0, t}h \\
        J^f_{0, 0}h = h.
    \end{cases}
\end{align}
For a proof we refer to \cref{diffvproof}. 

By definition, we have $u_{0,t}^f = \<1>_{0,t} + v_{0,t}^f$ and since $\<1>_{0,t}$ does not depend on the initial condition we can conclude that also $f \mapsto u^f$ is differentiable, i.e. there exists $u'(f)=v'(f): X \to Y$ (cf. \cref{diffvproof} for the definition of the function spaces $X$ and $Y$) such that
\begin{align}
    u^{f+h} - u^f - u'(f).h = v^{f+h} - v^{f} - v'(f).h = o(\norm{h}_{C^{-\al_0}}).
\end{align}
Thus we can compute for any $t \geq 0$ and $F \in \mathcal{F}C^{\infty}_b$ using a simple Taylor expansion 
\begin{align}
    P_tF(f+h) - P_tF(f) &= \E{F(u_{0,t}^{f+h}) - F(u_{0,t}^f)} \\
    &= \E{\sum_{i=1}^n \partial_i \bar{F}(u_{0,t}^f(h_1), \dots, u_{0,t}^f(h_n))(u_{0,t}^{f+h}(h_i) - u_{0,t}^f(h_i))} + o(\norm*{h}_{C^{-\al_0}}) \\
    &= \E{\sum_{i=1}^n \partial_i \bar{F}(u_{0,t}^f(h_1), \dots, u_{0,t}^f(h_n))(v_{0,t}'(f).h, h_i)_{L^2_x}} + o(\norm*{h}_{C^{-\al_0}}). 
\end{align}
This shows that $f \mapsto P_tF(f)$ is differentiable and we have
\begin{align}\label{chainrule}
    (P_tF)'(f).h &= \E{\bra*{DF(u_{0,t}^f), v_{0,t}'(f).h}_{L^2_x}} = \E{\bra*{DF(u_{0,t}^f), J^f_{0, t}h}_{L^2_x}}.
\end{align}
Moreover, by \cref{betterboundJ} we see that $(P_tF)'(f) : L^2_x \to \R$ is a bounded linear functional\footnote{more specifically the extended operator initially defined on the dense subspace $C^{\infty}$} and thus there exists $DP_tF(f) \in L^2_x$ such that
\begin{align}
    (P_tF)'(f).h = \int_{\T^2} DP_tF(f)(x) h(x) \dx{x}
\end{align}
and in particular
\begin{align}\label{duality}
    \norm*{DP_tF(f)}_{L^2_x} = \sup_{\norm*{h}_{L^2} \leq 1} \abs*{(P_tF)'(f).h}.
\end{align}
 
By \eqref{chainrule}, \eqref{duality} and the H\"older's inequality in probability for any $\kappa \geq 0$
\begin{align}
    \quad \quad \norm*{DP_t F(f)}_{L^2_x} & = \sup_{h \in C^{\infty}, \norm*{h}_{L^2_x}\leq1} \abs*{(P_tF)'(f).h} \\
    &\leq \E{\norm*{D F(u^{f}_{0,t})}^q_{H^{-\kappa}_x}}^{\frac{1}{q}} \bra*{\sup_{h \in C^{\infty}, \norm{h}_{L^2} \leq 1} \E{\norm*{J^f_{0, t}h}^p_{H^{\kappa}_x}}}^\frac{1}{p} \\
    &= \bra*{P_t\norm*{D F}^q_{H^{-\kappa}_x}(f)}^{\frac{1}{q}} \, \bra*{\sup_{h \in C^{\infty}, \norm{h}_{L^2_x} \leq 1} \E{\norm*{J^f_{0, t}h}^p_{H^{\kappa}_x}}}^{\frac{1}{p}} \label{secondineq}
\end{align}
where in the first line we used that $C^{\infty}$ is dense in $L^2_x$.
Hence, in order to prove \cref{thm:weak_grad_est} we have to estimate 
the quantity
\begin{align}\label{eq:J}
    \bra*{\sup_{h \in C^{\infty}, \norm{h}_{L^2_x} \leq 1} \E{\norm*{J^f_{0, t}h}^p_{H^{\kappa}_x}}}^{\frac{1}{p}}
\end{align}
uniformly in the initial condition $f \in C^{-\al_0}$. In order to do this, we will proceed in three steps. The first step is to prove an $L^2_x$-energy estimate with the drawback that the implicit constant is random and moreover it is not clear that it is integrable. The second step -- which is our core argument -- shows that this constant is indeed integrable and moreover uniformly in the initial condition. The third step is a post-processing from $L^2_x$ to $H^{\kappa}_x$ for any $\kappa <1$.

Before we embark in discussing our intermediate results, let us give the proof of our main theorem. 

\begin{proof}[Proof of \cref{thm:weak_grad_est}]
    For $\kappa\in(0,1)$ the assertion follows from combining \eqref{secondineq} and \cref{betterboundJ} below, which 
    provides an estimate on \eqref{eq:J}. For $k=0$ we apply \cref{boundJ}. 
\end{proof}

\subsection{\texorpdfstring{$L^2_x$}{L 2 x}-energy estimate}
For $s \leq t$ and
$h \in C^{\infty}$ we define $J_{s, t}h$ as the solution to the equation
\begin{align}\label{inieq}
    \begin{cases}
        (\partial_t - \Delta + m)J_{s, t}h = -3 \bra*{v_{s, t}^2 + 2 v_{s, t} \<1>_{s, t} + \<2>_{s, t}}J_{s, t}h + 3 c_{t-s, \infty} J_{s, t}, \\
        J_{s, t}h|_{t=s} = h.
    \end{cases}
\end{align}
In order to ease notation we will suppress the dependence on the initial condition but we will always assume that $v_{s,t}|_{t=s} = u_s^f$.

\medskip
The first step towards bounding \eqref{eq:J} is a standard energy estimate in order to bound the $L_x^2$-norm of $J_{s, t }h$ with respect to the $L_x^2$-norm of $h$. From now on, all proofs are postponed to \cref{sec:proofs}. 

\begin{prop}\label{enest}
    For all $s \leq t' \leq t$, $m > 0$ we have
    \begin{align}
       &\norm*{J_{s, t}h}_{L^2_x}^2 + \int_{t'}^t \ee^{-2m(t-r) + 2 \int_r^t g(s, r') \dx{r'}} \norm*{\nabla J_{s, r}h}_{L^2_x}^2 \dx{r} 
        + \int_{t'}^t \ee^{-2m(t-r) + 2 \int_r^t g(s, r') \dx{r'} } \norm*{v_{s, r} J_{s, r}h}_{L^2_x}^2 \dx{r}
        \\
        & \quad \leq \ee^{-2m(t-t') + 2 \int_{t'}^{t} g(s, r) \dx{r}} \norm*{J_{s, t'}h}_{L^2_x}^2, \label{enesth1}
    \end{align}
    where
    \begin{align}
         g(s,t) & := c \bigg(\norm*{\<1>_{s,t}}_{-\al}^2 + \norm*{\<1>_{s,t}}_{-\al}^{\frac{2}{1+\al}} \norm*{\nabla v_{s,t}}_{\infty}^{\frac{2\al}{1 + \al}} + \norm*{\<1>_{s,t}}^{\frac{2}{1-\al}}_{-\al} 
         \\
         & \quad \quad \quad + \norm*{\<2>_{s,t}}_{-\al} + \norm*{\<2>_{s,t}}_{-\al}^{\frac{2}{2-\al}} + c_{t-s, \infty}\bigg)
    \end{align}
    for some deterministic constant $c\equiv c(\alpha) < \infty$.
    In particular, we have
    \begin{align}\label{enestl2}
        \norm*{J_{s, t}h}_{L^2_x}^2 \leq \ee^{-2m(t-s) + 2 \int_s^t g(s, r) \dx{r}} \norm*{h}_{L^2_x}^2.
    \end{align}
\end{prop}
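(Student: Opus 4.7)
The natural starting point is a standard energy identity for \eqref{inieq}: testing against $J_{s,t}h$ in $L^2_x$ (justified by parabolic smoothing for $t>s$) produces
\begin{equs}
\tfrac{1}{2}\tfrac{\dd}{\dd t}\|J_{s,t}h\|_{L^2_x}^2 + \|\nabla J_{s,t}h\|_{L^2_x}^2 + m\|J_{s,t}h\|_{L^2_x}^2 + 3\|v_{s,t}J_{s,t}h\|_{L^2_x}^2
= -6\intT{v_{s,t}\<1>_{s,t}J_{s,t}h^2} - 3\intT{\<2>_{s,t}J_{s,t}h^2} + 3c_{t-s,\infty}\|J_{s,t}h\|_{L^2_x}^2.
\end{equs}
All terms on the right are benign except for the two pairings with the Wick powers, which live in $C^{-\alpha}$ and must be interpreted in the sense of distributions. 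The constant-counterterm piece already has the form $g\cdot\|J\|_{L^2_x}^2$ and contributes the $c_{t-s,\infty}$ summand to $g(s,t)$.

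Next, I would handle each singular pairing by duality between $C^{-\alpha}=B^{-\alpha}_{\infty,\infty}$ and $B^\alpha_{1,1}$ together with a product estimate. For the $\<2>$-term one writes $|\intT{\<2>_{s,t}J^2}|\lesssim \|\<2>_{s,t}\|_{-\alpha}\|J^2\|_{B^\alpha_{1,1}}$; a standard Bony-type product estimate combined with the interpolation $\|J^2\|_{B^\alpha_{1,1}}\lesssim \|J\|_{L^2_x}^2 + \|J\|_{L^2_x}^{2-\alpha}\|\nabla J\|_{L^2_x}^{\alpha}$ and then Young's inequality with conjugate exponents $(2/\alpha,2/(2-\alpha))$ produces exactly $\|\<2>_{s,t}\|_{-\alpha}$ and $\|\<2>_{s,t}\|_{-\alpha}^{2/(2-\alpha)}$ times $\|J\|_{L^2_x}^2$, plus a small fraction of $\|\nabla J\|_{L^2_x}^2$ to be absorbed into the dissipative term on the left. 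For the mixed term $\intT{v_{s,t}\<1>_{s,t}J^2}$ one treats $v_{s,t}J$ as one factor and $\<1>_{s,t}J$ via duality, using $\|vJ^2\|_{B^\alpha_{1,1}}\lesssim \|v\|_{C^\alpha}\|J\|_{L^2_x}^2 + \|v\|_{\infty}\|J^2\|_{B^\alpha_{1,1}}$ and the interpolation $\|v\|_{C^\alpha}\lesssim \|v\|_{\infty}^{1-\alpha}\|\nabla v\|_{\infty}^{\alpha}$. Two applications of Young's inequality with the appropriate pairs of conjugate exponents yield the summands $\|\<1>_{s,t}\|_{-\alpha}^{2/(1-\alpha)}$, $\|\<1>_{s,t}\|_{-\alpha}^{2}$ and $\|\<1>_{s,t}\|_{-\alpha}^{2/(1+\alpha)}\|\nabla v_{s,t}\|_{\infty}^{2\alpha/(1+\alpha)}$, multiplied by $\|J\|_{L^2_x}^2$, where the terms $\|v_{s,t}J_{s,t}h\|_{L^2_x}^2$ and $\|\nabla J_{s,t}h\|_{L^2_x}^2$ that appear as the complementary factors in Young's inequality are absorbed by the corresponding positive contributions on the left.

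After this bookkeeping the differential inequality reads
\begin{equs}
\tfrac{1}{2}\tfrac{\dd}{\dd t}\|J_{s,t}h\|_{L^2_x}^2 + \tfrac{1}{2}\|\nabla J_{s,t}h\|_{L^2_x}^2 + \tfrac{1}{2}\|v_{s,t}J_{s,t}h\|_{L^2_x}^2 + m\|J_{s,t}h\|_{L^2_x}^2 \leq g(s,t)\|J_{s,t}h\|_{L^2_x}^2,
\end{equs}
with $g(s,t)$ as stated. Multiplying by the integrating factor $\ee^{2m(t-r)-2\int_r^t g(s,r')\,\dd r'}$ and integrating from $t'$ to $t$ gives \eqref{enesth1}, and dropping the two integrated positive terms together with choosing $t'=s$ yields \eqref{enestl2}. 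The main delicate point is really the organization of the Young's inequality exponents: one has to split the $\intT{v\<1> J^2}$ integral in such a way that the only quantities not ultimately absorbed are powers of $\|\<1>_{s,t}\|_{-\alpha}$, $\|\<2>_{s,t}\|_{-\alpha}$ and $\|\nabla v_{s,t}\|_{\infty}$ — each of which is controllable by the stochastic estimates of \cref{stochest} and the ``coming down from infinity" property recalled in Section~\ref{sec:pre}.
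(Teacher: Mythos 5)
Your overall skeleton is the same as the paper's: test \eqref{inieq} with $J_{s,t}h$, use the duality between $C^{-\al}$ and $B^{\al}_{1,1}$ for the two Wick pairings, interpolate, apply Young's inequality and absorb into the dissipative terms $\norm{\nabla J_{s,t}h}_{L^2_x}^2$ and $3\norm{v_{s,t}J_{s,t}h}_{L^2_x}^2$, and finish with an integrating factor. Your treatment of the counterterm and of the second Wick power term (interpolation $\norm{J^2}_{B^{\al}_{1,1}}\lesssim \norm{J}_{L^2_x}^2+\norm{J}_{L^2_x}^{2-\al}\norm{\nabla J}_{L^2_x}^{\al}$, then Young with exponents $(2/\al,2/(2-\al))$) is exactly the paper's. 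However, there is a genuine gap in the mixed term $\int v_{s,t}\,(\text{first Wick power})\,(J_{s,t}h)^2$. Your decomposition $\norm{vJ^2}_{B^{\al}_{1,1}}\lesssim \norm{v}_{C^{\al}}\norm{J}_{L^2_x}^2+\norm{v}_{L^\infty_x}\norm{J^2}_{B^{\al}_{1,1}}$ separates $v$ from $J$ too early: after $\norm{v}_{C^{\al}}\lesssim\norm{v}_{L^\infty_x}^{1-\al}\norm{\nabla v}_{L^\infty_x}^{\al}$ you are left with standalone powers of $\norm{v_{s,t}}_{L^\infty_x}$ multiplying $\norm{J}_{L^2_x}^2$ (and $\norm{J}_{L^2_x}^{2-\al}\norm{\nabla J}_{L^2_x}^{\al}$). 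These cannot be absorbed into the good term $3\norm{v_{s,t}J_{s,t}h}_{L^2_x}^2$ on the left, because the inequality $\norm{vJ}_{L^2_x}\le\norm{v}_{L^\infty_x}\norm{J}_{L^2_x}$ goes the wrong way for absorption, and $\norm{v_{s,t}}_{L^\infty_x}$ does not occur in the stated $g$. So your claim that two applications of Young's inequality yield precisely the summands $\norm{\cdot}_{-\al}^{2}$, $\norm{\cdot}_{-\al}^{2/(1-\al)}$ and $\norm{\cdot}_{-\al}^{2/(1+\al)}\norm{\nabla v_{s,t}}_{L^\infty_x}^{2\al/(1+\al)}$, with only $\norm{vJ}_{L^2_x}^2$ and $\norm{\nabla J}_{L^2_x}^2$ as complementary (absorbable) factors, does not follow from your splitting; at best you would obtain a different $g$ containing positive powers of $\norm{v_{s,t}}_{L^\infty_x}$, which is not the proposition as stated and would change the exponents fed into the stopping-time and $H^{\kappa}_x$ sections.

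The repair is structural and is what the paper does: apply the interpolation $\norm{w}_{B^{\al}_{1,1}}\lesssim\norm{w}_{L^1_x}^{1-\al}\norm{\nabla w}_{L^1_x}^{\al}+\norm{w}_{L^1_x}$ to the whole product $w=v_{s,t}(J_{s,t}h)^2$, and only afterwards use Cauchy--Schwarz keeping $v_{s,t}J_{s,t}h$ glued together, i.e. $\norm{vJ^2}_{L^1_x}\le\norm{vJ}_{L^2_x}\norm{J}_{L^2_x}$ and, by the chain rule, $\norm{\nabla(vJ^2)}_{L^1_x}\le\norm{\nabla v}_{L^\infty_x}\norm{J}_{L^2_x}^2+2\norm{vJ}_{L^2_x}\norm{\nabla J}_{L^2_x}$. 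Then the only pure-$v$ quantity that survives is $\norm{\nabla v_{s,t}}_{L^\infty_x}$, every occurrence of $\norm{v_{s,t}J_{s,t}h}_{L^2_x}$ and $\norm{\nabla J_{s,t}h}_{L^2_x}$ appears with total power strictly below $2$, and Young's inequality (with a small parameter $\lambda$) absorbs them into the left-hand side, leaving exactly the stated summands of $g$. With that replacement your concluding differential inequality, the integrating factor $\ee^{2mt-2\int g}$, integration from $t'$ to $t$, and the specialization $t'=s$ for \eqref{enestl2} are all correct and coincide with the paper's argument.
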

There are some important things we want to remark concerning \cref{enest}. The first remark is that if it were not for the singular nature and the renormalization procedure involved the \emph{error} term $g$ would be zero and hence we would have a clean energy estimate. The second is that in order to prove \cref{thm:weak_grad_est} with an $L^2_x$-norm on the r.h.s. it is enough to consider \eqref{enestl2} but since our goal is to achieve an $H^{\kappa}_x$-estimate it is crucial to use the additional information coming from \eqref{enesth1}, namely, the estimate on the gradient of $J_{s,r}h$ and the product $v_{s,r}J_{s,r}h$. The last  and most important thing we want to remark makes the bridge to our next section. Notice that by Fernique's theorem the quantity $\|\<1>_{s,t}\|_{-\al}$ in $g(s, t)$ has Gaussian moments, whereas $\|\<2>_{s,t}\|_{-\alpha}$ has only exponential moments. Therefore, the pre-factor on the r.h.s. of \eqref{enesth1} fails to be stochastically integrable. To overcome this problem we appeal to a 
stopping time argument, which we explain in the next section. 

\subsection{Stopping time argument and \texorpdfstring{$L^2_x$}{L 2 x}-estimate}\label{sec:stochasticest}

In order to bypass the issue of integrability of $e^{\int_s^t g(s, r) \dx{r}}$ we appeal to probabilistic arguments inspired by \cite{CLL13}. More precisely, we restart the Wick powers $\<k>$, $k=1,2,3$, each time they exceed a certain barrier. This allows us to replace $g(s, t)$ by a the length of the time interval times a
deterministic constant times a counting processes $N(t)$, see 
\eqref{eq:countproc} below. By choosing the length of the time interval small enough we can ensure the exponential integrabillity of the counting process $N(t)$, see Proposition~\ref{expmoments}. The drawback is the exponential factor $\ee^{m_*t}$ appearing in 
Theorem~\ref{thm:weak_grad_est}. 

\medskip

We define the stopping time
\begin{align}
    \tilde{\tau}_1 := \inf \set*{t \geq 0 : \sup_{k = 1, 2, 3} \norm*{\<k>_{0,t}}_{-\al} \geq \eta}
\end{align}
and for $\theta\in(0,1)$ we set
\begin{align}
    \tau_1 := \tilde{\tau}_1 \wedge \theta.
\end{align}
The value of $\eta\equiv\eta(\alpha,L)$ will be fixed via
\begin{equs}
  \sup_{\theta\in(0,1]}\mathds{P}\bra*{\tilde{\tau}_1 \leq \theta} <\frac{1}{4}. \label{eq:prob_small}
\end{equs}
This is possible due to Markov's inequality, \eqref{eq:stochest} and the fact that $\theta<1$ since 
\begin{equs}
  \mathds{P}\bra*{\tilde{\tau}_1 \leq \theta} 
  \leq \mathds{P}\bra*{\sup_{k=1,2,3} \sup_{t \leq \theta} \norm*{\<k>_{0,t}}_{-\al} \geq \eta}
  \leq \mathds{P}\bra*{\sup_{k=1,2,3} \sup_{t \leq 1} \norm*{\<k>_{0,t}}_{-\al} \geq \eta}
  <\frac{1}{4}. 
\end{equs}

\medskip

We inductively define a sequence of stopping times for $n > 1$ via
\begin{align}
    \tilde{\tau}_n &:= \inf \set*{t \geq \tau_{n-1} : \sup_{k = 1, 2, 3} \norm*{\<k>_{\tau_{n-1}, t}}_{-\al} \geq \eta}, 
\end{align}
where $\<k>_{s, t}$ denotes the process at time $t$ restarted at time $s$, and
\begin{align}
    \tau_n := \tau_{n-1} + \bra*{\tilde{\tau}_n - \tilde{\tau}_{n-1}} \wedge \theta.
\end{align}
Furthermore, we define the standard filtration of $\sigma$-algebras for $t > 0$
\begin{align}
    \mathcal{F}_t := \sigma\bra*{\xi(h): \, h \in L^2_{t, x}, \, \supp h \subset (0, t) \times \T^2}.
\end{align}
We notice that since $\sigma\bra*{\<1>_{0,\cdot \wedge t}} \subset \mathcal{F}_t$ and the process $\<1>_{t, t + \cdot}$ is independent of $\<1>_{0, \cdot \wedge t}$ (cf. \cite[Proposition 2.3]{TW18}), by the strong Markov property for any stopping time $\tau$ the process $\<1>_{\tau, \tau + \cdot}$ is independent of $\mathcal{F}_{\tau}$. Since $\sigma\bra*{\<k>_{0, \cdot \wedge t}} \subset \mathcal{F}_t$, we have that for any $n \geq 1$
\begin{align}
    \tilde{\tau}_n - \tilde{\tau}_{n-1} \ \mathrm{is \ independent \ of} \ \mathcal{F}_{\tilde{\tau}_{n-1}}
\end{align}
and thus 
\begin{align}\label{incind}
    \<k>_{\tau_{n}, \tau_{n + \cdot}} \ \mathrm{is \ independent \ of} \ \mathcal{F}_{\tau_{n}}.
\end{align} 

\medskip

Let $t \leq \tau_1$. By the definition of $\tau_1$ we know that $\norm*{\<k>_{0,t}}_{-\al} < \eta$ for all $k=1,2,3$. Then by \cref{enest} and \cref{gradvest} for any $\eps > 0$ we have that
\begin{align}
    \norm*{J_{0,t}h}^2_{L^2_x} &\leq \ee^{-2mt + 2 \int_0^t g(0,r) \dx r} \norm*{h}^2_{L^2_x} \\
    &\leq \ee^{-2mt + 2c \bra*{\eta^2 + \eta^{\frac{2}{1-\al}} + \eta^{\frac{2}{2-\al}}}t + c \eta^{\frac{2}{1+\al}} \int_0^t r^{\frac{2\al}{1+\al} (-1 - \eps)} \dx r} \norm*{h}^2_{L^2_x} \\
    &\leq \ee^{-2mt + 2c\bra*{t + t^{\frac{1-\al\bra*{1+2\eps}}{1 + \al}}}} \norm*{h}^2_{L^2_x}
\end{align}
for some $c\equiv c(\alpha,\eta) < \infty$.
For $\tau_{n-1} \leq t \leq \tau_n$ we have by \cref{enest} in the same manner
\begin{align}
    \norm*{J_{\tau_{n-1}, t}h }^2_{L^2_x} &\leq \ee^{-2m(t - \tau_{n-1}) + 2c \bra*{ (t - \tau_{n-1}) + (t-\tau_{n-1})^{\frac{1 - \al \bra*{1+2\eps}}{1+\al}}}} \norm*{J_{\tau_{n-2}, \tau_{n-1}}h}^2_{L^2_x} \\ 
\end{align}
and thus by induction we get that
\begin{align}
     \norm*{J_{0, t}h}^2_{L^2_x} &\leq \ee^{-2mt + 2c \bra*{(t - \tau_{n-1})^{\frac{1 - \al \bra*{1+2\eps}}{1+\al}} + \bra*{t-\tau_{n-1}} + \sum_{i =1}^{n-1} (\tau_i - \tau_{i-1})^{\frac{1 - \al \bra*{1+2\eps}}{1+\al}} + \bra*{\tau_i - \tau_{i-1}}}} \norm*{h}^2_{L^2_x}.
\end{align}
From now on we set $\gamma := \frac{1 - \al \bra*{1+2\eps}}{1+\al}$.
By introducing the following counting process
\begin{align}\label{eq:countproc}
    N(t) := \inf \set*{n \geq 1 : \tau_n \geq t}
\end{align}
we furthermore estimate using $\tau_i - \tau_{i-1} \leq \theta$ for any $t \geq 0$
\begin{align}\label{energyestrand}
    \norm*{J_{0, t}h}^2_{L^2_x} &\leq \ee^{-2mt + 2c \theta^{\gamma} N(t) } \norm*{h}^2_{L^2_x}.
\end{align}

\begin{rem} Although we suppressed the dependence on the initial condition $f$ to ease the notation,  
we should also point out that our estimates do not depend $f$. This is possible because of the 
``coming down from infinity" property (cf. \cref{sec:remv}), which allows us to ensure that the gradient estimate in 
\cref{thm:weak_grad_est} is uniform in $f$. 
\end{rem}

The above procedure boils down the problem of estimating $J_{0,t}h$ to showing exponential moment for $N(t)$. 
Since the sequence $\{\tau_n\}_{n \geq 1}$ has independent increments\footnote{at least if conditioned onto $\mathcal{F}_{\tau_{n-1}}$} we can expect this provided we choose $\theta$ small enough. This is the content of the next proposition,
which is in the core of our argument, therefore we present the proof here. 

\begin{prop}\label{expmoments}
    Let $c\equiv c(\alpha,\eta)>0$ as in \eqref{energyestrand}. For all $p \geq 1$ there exists $\theta_0\equiv\theta_0(\alpha,p,\eta)\in(0,1)$ which is independent of $m$ such that for all $\theta \leq \theta_0$ and $t\geq 0$
    \begin{align}
        \E{\ee^{pc\theta^{\gamma} N(t)}}^{\frac{1}{p}} \leq C \ee^{\frac{2 \ln 2}{\theta} t},
    \end{align}
   where $C$ is a universal constant uniform in $L$ and $m$. 
\end{prop}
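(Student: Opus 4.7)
The plan is to reduce the exponential moment of $N(t)$ to a Chernoff-type estimate for an i.i.d.~sum of bounded nonnegative random variables, using the strong Markov structure encoded in \eqref{incind} together with the quantitative bound \eqref{eq:prob_small}. First, I would identify the increments $Y_n := \tau_n - \tau_{n-1}$ as an i.i.d.~sequence distributed as $\tilde{\tau}_1 \wedge \theta$. This follows by induction from \eqref{incind}: conditionally on $\mathcal{F}_{\tau_{n-1}}$, the restarted Wick powers are independent of the past and equal in law to those started at time zero, and $Y_n$ is a measurable functional of them. Since $Y_n \leq \theta$ almost surely and $\mathds{P}(Y_n = \theta) = \mathds{P}(\tilde{\tau}_1 > \theta) > 3/4$ by \eqref{eq:prob_small}, each increment admits an essentially two-point lower bound.

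Next, since $\set*{N(t) \geq n} = \set*{\sum_{j=1}^{n-1} Y_j < t}$, an exponential Markov inequality with parameter $\mu > 0$ yields
\begin{align*}
    \mathds{P}(N(t) \geq n) \leq \ee^{\mu t}\, \E{\ee^{-\mu Y_1}}^{n-1}.
\end{align*}
I would use the crude bound $\E{\ee^{-\mu Y_1}} \leq \mathds{P}(Y_1 < \theta) + \ee^{-\mu\theta}\mathds{P}(Y_1 = \theta) \leq \tfrac14 + \ee^{-\mu\theta}$ and pick $\mu = (\ln 2)/\theta$, which gives $\E{\ee^{-\mu Y_1}} \leq 3/4$ and hence $\mathds{P}(N(t) \geq n) \leq \ee^{(\ln 2)t/\theta}(3/4)^{n-1}$. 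Assembling the exponential moment by summation,
\begin{align*}
    \E{\ee^{pc\theta^\gamma N(t)}} \leq \sum_{n \geq 1} \ee^{pc\theta^\gamma n}\mathds{P}(N(t) \geq n) \leq \tfrac{4}{3}\,\ee^{(\ln 2)t/\theta}\sum_{n \geq 1}\bra*{\tfrac{3}{4}\ee^{pc\theta^\gamma}}^{n},
\end{align*}
I would then choose $\theta_0 \in (0,1)$ small enough that $pc\theta_0^\gamma < \ln(4/3)$; for all $\theta \leq \theta_0$ the geometric sum is bounded by a constant $C \equiv C(p)$, and raising to the $1/p$-th power while using $p \geq 1$ absorbs the resulting prefactor $(\ln 2)/(p\theta) \leq (2\ln 2)/\theta$ in the exponent.

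The main point requiring care is that $\theta_0$ must be genuinely independent of $m$. This ultimately reduces to the $m$-uniformity in \cref{stochest}, which is precisely what makes \eqref{eq:prob_small} hold uniformly in $\theta \in (0,1]$ and in $m$; every quantity appearing downstream (the $3/4$ probability, the Chernoff parameter $(\ln 2)/\theta$ and the threshold $pc\theta_0^\gamma < \ln(4/3)$) then depends only on $\alpha$, $\eta$, $c$ and $p$, so $\theta_0$ inherits this uniformity automatically. The independence of $C$ from $L$ is traced back in the same way through \eqref{eq:prob_small}.
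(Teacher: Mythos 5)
Your proposal is correct and follows essentially the same route as the paper: an exponential Chernoff bound on $\mathds{P}(N(t)\geq n)$ via the conditional independence of the increments $\tau_k-\tau_{k-1}$ and the uniform bound \eqref{eq:prob_small}, followed by summing the geometric tail and choosing $\theta_0$ so that $pc\theta_0^\gamma$ is below a fixed threshold. The only differences are cosmetic (Chernoff parameter $\ln 2/\theta$ versus $2\ln 2/\theta$, hence $3/4$ versus $1/2$ and $\ln(4/3)$ versus $\ln 2$ in the smallness condition), and your tracking of the $m$- and $L$-uniformity through \cref{stochest} matches the paper's reasoning.
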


\begin{proof}
    Let $n \geq 1$. The Markov inequality and \eqref{incind} yield
    \begin{align}
        \mathds{P}\bra*{N(t) \geq n} &= \mathds{P}\bra*{\tau_n \leq t} 
        = \mathds{P}\bra*{\sum_{k = 1}^n \bra*{\tau_{k} - \tau_{k-1}} \leq t}
        = \mathds{P}\bra*{\ee^{-\frac{2\ln2}{\theta} \sum_{k = 1}^n \bra*{\tau_{k} - \tau_{k-1}}} \geq \ee^{-\frac{2\ln2}{\theta} t}} \\
        &\leq \ee^{\frac{2\ln2}{\theta} t} \E{ \ee^{-\frac{2\ln2}{\theta} \sum_{k = 1}^n \bra*{\tau_{k} - \tau_{k-1}}}} 
        = \ee^{\frac{2\ln2}{\theta} t}  \bra*{\E{\ee^{-\frac{2\ln2}{\theta} \tau_1}}}^n.\label{est1}
    \end{align}
    Moreover, we estimate
    \begin{align}\label{est2}
        \E{\ee^{-\frac{2\ln2}{\theta} \tau_1}} &\leq \ee^{-2\ln2} + \mathds{P}\bra*{\tilde{\tau}_1\leq \theta} \leq \frac{1}{4} + \mathds{P}\bra*{\tilde{\tau}_1 \leq \theta}.
    \end{align}
    which combined with \eqref{eq:prob_small} yields 
    \begin{align}
        \E{\ee^{-\frac{2\ln2}{\theta} \tau_1}} \leq \frac{1}{2}.
    \end{align}
    Finally, we have by \eqref{est1} that
    \begin{align}
        \mathds{P}\bra*{N(t) \geq n} &\leq  2^{-n} \ee^{\frac{2\ln2}{\theta} t}
    \end{align}
    and the claim follows by choosing $\theta_0$ small enough such that $\theta^{\gamma}_0 < \frac{\ln2}{c p}$. 
\end{proof}

As an immediate consequence of \cref{expmoments} and \eqref{energyestrand} we obtain the following $L^2_x$-estimate. 

\begin{prop}\label{boundJ} For every $p\geq 1$ there exists $m_*\equiv m_*(\alpha,p,L)>0$ such that for every $t\geq0$,
\begin{equs}
 \E{\norm*{J_{0, t}}^{p}_{L^2_x \to L^2_x}}^{\frac{1}{p}} \leq C \ee^{-\bra*{m - m_*}t}, 
\end{equs}
for some universal constant $C<\infty$ which is uniform in $m$ and $L$. 
\end{prop}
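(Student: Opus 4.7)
The plan is to combine the pathwise bound \eqref{energyestrand} with the exponential integrability of the counting process obtained in \cref{expmoments}; the statement should follow essentially by direct composition, so I do not anticipate any serious new step beyond what has already been set up.

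First I would observe that \eqref{energyestrand} holds for every $h\in C^\infty$ with the random pre-factor $\ee^{-2mt + 2c\theta^\gamma N(t)}$ being independent of $h$. Taking the supremum over $h$ with $\|h\|_{L^2_x}\leq 1$ and using density of $C^\infty$ in $L^2_x$ upgrades this pointwise estimate to the operator bound
\begin{equs}
\norm*{J_{0,t}}_{L^2_x\to L^2_x}^2 \leq \ee^{-2mt + 2c\theta^\gamma N(t)}.
\end{equs}
Raising to the $p/2$-th power and taking expectations yields
\begin{equs}
\E{\norm*{J_{0,t}}_{L^2_x\to L^2_x}^p}^{1/p} \leq \ee^{-mt}\,\E{\ee^{pc\theta^\gamma N(t)}}^{1/p}.
\end{equs}

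Next I would invoke \cref{expmoments} with the same exponent $p$ and fix $\theta := \theta_0(\alpha,p,\eta)\in(0,1)$ as prescribed there. This produces
\begin{equs}
\E{\norm*{J_{0,t}}_{L^2_x\to L^2_x}^p}^{1/p} \leq C\,\ee^{-mt + \frac{2\ln 2}{\theta_0} t} = C\,\ee^{-(m-m_*)t},
\end{equs}
where $m_* := \frac{2\ln 2}{\theta_0}$ and $C$ is the universal constant from \cref{expmoments}. Since $\theta_0$ depends only on $\alpha$, $p$, $\eta$, and $\eta$ was calibrated in \eqref{eq:prob_small} from \cref{stochest} whose constant is independent of $m$, both $C$ and $m_*\equiv m_*(\alpha,p,L)$ are independent of the mass $m$; moreover $C$ is also independent of $L$, matching the claim.

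The one point I would verify carefully rather than take for granted is uniformity in $f\in C^{-\alpha_0}$, since the notation $J_{0,t}$ suppresses the initial condition. This uniformity comes from the ``coming down from infinity'' property (cf. \cref{sec:remv}), which furnishes pathwise bounds on $\nabla v_{s,r}$ entering the error term $g(s,r)$ in \cref{enest} that are independent of the starting value $u_s^f$. Because the initial condition enters \eqref{energyestrand} only through $g$, the bound propagates uniformly in $f$, and consequently the resulting $L^p_\omega$ estimate is uniform in $f$ as well. The bulk of the real work has been front-loaded into \cref{enest} and \cref{expmoments}, so \cref{boundJ} is a clean corollary and I do not foresee any essential obstacle.
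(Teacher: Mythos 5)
Your proposal is correct and matches the paper's intended argument: Proposition~\ref{boundJ} is stated there precisely as an immediate consequence of the pathwise bound \eqref{energyestrand} and the exponential moment bound of \cref{expmoments}, with $m_*$ arising as $\tfrac{2\ln 2}{\theta_0}$ and uniformity in $f$ guaranteed by the ``coming down from infinity'' property, exactly as you argue.
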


\subsection{Upgrade from \texorpdfstring{$L^2_x$}{L 2 x} to \texorpdfstring{$H^{\kappa}_x$}{H kappa x}}

In this section we upgrade the $L^2_x$-estimate in \cref{boundJ} to an $H^{\kappa}_x$-estimate. 

\medskip

The first step is to post-process \cref{enest} using \eqref{energyestrand}.
\begin{cor}\label{twoest}
    For every $t' \leq t$ we have that
    \begin{align}
        &\norm*{J_{0, t}h}^2_{L^2_x} + \int_{t'}^t \ee^{-2m(t-s)} \norm*{\nabla J_{0, s}h}^2_{L^2_x} \dx{s} + \int_{t'}^t \ee^{-2m(t-s)} \norm*{v_{0, s} J_{0, s}h}^2_{L^2_x} \dx{s} \\
        & \quad \leq \ee^{-2mt} \bra*{\ee^{2c\theta^{\gamma} N(t')} + \int_{t'}^t \ee^{2c\theta^{\gamma}N(s)} g(0, s) \dx{s}} \norm*{h}^2_{L^2_x}. \label{twoest1}
    \end{align}
\end{cor}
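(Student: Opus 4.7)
The goal is essentially to ``un-absorb'' the $g$-dependent exponential factor on the right-hand side of Proposition~\ref{enest} so that it appears linearly (rather than exponentially) in a Duhamel-type term, at which point the pathwise estimate \eqref{energyestrand} coming from the stopping-time argument can be applied directly. Observe that a straightforward manipulation of \eqref{enesth1} itself will not suffice: dropping the factor $\ee^{2\int_r^t g(0,r')\dx{r'}} \geq 1$ on the left is cheap, but the corresponding factor on the right is exponentially large in $g$ and cannot be replaced by an expression involving only $N$ by plugging in \eqref{energyestrand}. Instead, the plan is to return to the underlying differential energy inequality and integrate it against a different factor.

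\textbf{Step 1 (energy inequality).} Testing \eqref{inieq} with $J_{0,s}h$ in $L^2_x$ and bounding the three troublesome terms $\int v_{0,s}\<1>_{0,s} J_{0,s}h^2$, $\int \<2>_{0,s}J_{0,s}h^2$ and $c_{s,\infty}\int J_{0,s}h^2$ by Besov duality combined with Young's inequality exactly as in the derivation of Proposition~\ref{enest} (the detailed computation is carried out in Section~\ref{sec:proofs}), one obtains the pointwise-in-time differential inequality
\begin{equs}
\tfrac{1}{2}\partial_s\norm*{J_{0,s}h}^2_{L^2_x} + \norm*{\nabla J_{0,s}h}^2_{L^2_x} + m\norm*{J_{0,s}h}^2_{L^2_x} + 3\norm*{v_{0,s}J_{0,s}h}^2_{L^2_x} \leq g(0,s)\norm*{J_{0,s}h}^2_{L^2_x}.
\end{equs}

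\textbf{Step 2 (integration with a cheap factor).} Multiplying by $\ee^{2ms}$ (in contrast to Proposition~\ref{enest}, where one multiplies by the much larger $\ee^{2ms-2\int_0^s g(0,r')\dx{r'}}$ in order to absorb the right-hand side into the integrating factor), integrating over $[t',t]$ and dividing by $\ee^{2mt}$ yields
\begin{equs}
& \norm*{J_{0,t}h}^2_{L^2_x} + \int_{t'}^t \ee^{-2m(t-s)}\norm*{\nabla J_{0,s}h}^2_{L^2_x}\dx{s} + \int_{t'}^t \ee^{-2m(t-s)}\norm*{v_{0,s}J_{0,s}h}^2_{L^2_x}\dx{s} \\
& \qquad \leq \ee^{-2m(t-t')}\norm*{J_{0,t'}h}^2_{L^2_x} + 2\int_{t'}^t \ee^{-2m(t-s)}g(0,s)\norm*{J_{0,s}h}^2_{L^2_x}\dx{s}.
\end{equs}

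\textbf{Step 3 (insert the pathwise $L^2_x$-bound).} Now \eqref{energyestrand} gives the pointwise-in-$\omega$ bound $\norm*{J_{0,r}h}^2_{L^2_x} \leq \ee^{-2mr+2c\theta^{\gamma}N(r)}\norm*{h}^2_{L^2_x}$ uniformly for all $r \geq 0$. Applied to $r=t'$ this controls the first term on the right by $\ee^{-2mt+2c\theta^{\gamma}N(t')}\norm*{h}^2_{L^2_x}$, while applied inside the integrand it gives $\ee^{-2m(t-s)}g(0,s)\norm*{J_{0,s}h}^2_{L^2_x} \leq \ee^{-2mt+2c\theta^{\gamma}N(s)}g(0,s)\norm*{h}^2_{L^2_x}$. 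Combining these two estimates produces exactly the right-hand side of \eqref{twoest1}, with the harmless factor of $2$ in front of the Duhamel term being absorbed into the constant $c$.

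\textbf{Main obstacle.} There is no real obstacle beyond Step~1, which is already an ingredient in the proof of Proposition~\ref{enest}; the content of the corollary is really just the observation that \emph{how} one applies Grönwall to the underlying differential inequality matters, and that the ``non-absorbing'' version combines cleanly with the $L^2_x$-bound from the stopping-time argument to yield a post-processed estimate in which $g$ appears only linearly and the exponential blowup is replaced by the tame quantity $\ee^{2c\theta^{\gamma}N(\cdot)}$ which, by Proposition~\ref{expmoments}, has good integrability properties.
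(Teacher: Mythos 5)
Your proposal is correct and follows essentially the same route as the paper: the paper's proof also returns to the differential energy inequality \eqref{energyeq2} (the outcome of your Step~1, though after the Young absorptions the gradient and $v_{0,s}J_{0,s}h$ terms carry coefficient $\tfrac12$ rather than $1$ and $3$, which is immaterial here), multiplies only by $\ee^{2ms}$, integrates over $[t',t]$, and then inserts the pathwise bound \eqref{energyestrand} both for $\norm*{J_{0,t'}h}^2_{L^2_x}$ and inside the Duhamel integral. The factor $2$ you absorb into the constant is likewise implicitly absorbed in the paper's statement, so no further changes are needed.
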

We can now upgrade \cref{boundJ} to $H^{\kappa}_x$.
\begin{prop}\label{betterboundJ}
    Let $\kappa\in(0,1)$ and $p\geq 1$. For every $\alpha<\frac{1-\kappa}{5}$ there exists $m_*\equiv m_*(\alpha,p,L)>0$ such that
    \begin{align}
        \E{\norm*{J_{0, t}}^{p}_{L^2_x \to H^{\kappa}_x}}^{\frac{1}{p}} \leq C (t\wedge1)^{-\frac{k+5\alpha}{2}} \ee^{-\bra*{m - m_*}t},
    \end{align}
    for some constant $C\equiv C(p,\alpha,\kappa, L)<\infty$ which is uniform in $f$. 
\end{prop}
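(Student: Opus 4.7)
The plan is to combine Proposition \ref{boundJ} (which already provides an $L^2_x \to L^2_x$-bound with the announced exponential decay) with a short-time Schauder analysis of the mild formulation of \eqref{inieq}. I would split time at $t_0 := t - (t \wedge \tfrac{1}{2})$: Proposition \ref{boundJ} controls $J_{0,t_0}$ from $L^2_x$ to $L^2_x$ with the factor $\ee^{-(m-m_*)(t-t_0)}$, while the strong Markov property \eqref{incind} identifies $J_{t_0,t}$ in law with $J_{0,t-t_0}$ driven by an independent restarted noise. Since $t - t_0 \leq 1/2$, the remaining task is to estimate the short-time operator norm of $J_{0,\tau}\colon L^2_x \to H^\kappa_x$ for $\tau \in (0, 1/2]$; the two pieces are then chained through H\"older's inequality in $\omega$, using the independence from the strong Markov property, to obtain the desired bound.

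\textbf{Core steps.}
For the short-time estimate I would start from Duhamel's formula
\begin{equs}
J_{0,\tau} h = \ee^{(\Delta - m)\tau} h - 3 \int_0^\tau \ee^{(\Delta - m)(\tau - s)} \Xi_s J_{0,s} h \, \dd s,
\end{equs}
with $\Xi_s := v_{0,s}^2 + 2 v_{0,s} \<1>_{0,s} + \<2>_{0,s} - c_{s,\infty}$. Taking the $H^\kappa_x$-norm, the free term is bounded by $\tau^{-\kappa/2} \ee^{-m\tau} \norm*{h}_{L^2_x}$ via heat-kernel smoothing. For the Duhamel term I would combine Bony's paraproduct estimate
\begin{equs}
\norm*{\Xi_s J_{0,s} h}_{H^{-\al}_x} \lesssim \norm*{\Xi_s}_{-\al} \, \norm*{J_{0,s} h}_{H^{\al + \al'}_x}
\end{equs}
(for a small $\al' > 0$) with the smoothing bound $\norm*{\ee^{(\Delta - m)(\tau - s)} w}_{H^\kappa_x} \lesssim (\tau - s)^{-(\kappa + \al)/2} \ee^{-m(\tau - s)} \norm*{w}_{H^{-\al}_x}$. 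The moment bound on $\norm*{\Xi_s}_{-\al}$ follows from Proposition \ref{stochest} together with the coming-down-from-infinity estimates on $v_{0,s}$ from \cref{sec:remv}, and produces a factor that blows up like $s^{-\al - \epsilon}$ as $s \downarrow 0$. To close the resulting integral inequality I would bootstrap successively through Sobolev indices $\beta = 2\al, 3\al, \dots, \kappa$: at each step $\norm*{J_{0,s} h}_{H^{\al + \al'}_x}$ is estimated by reapplying the same formula with $\kappa$ replaced by the next smaller index, until the right-hand side involves only $\norm*{J_{0,s} h}_{L^2_x}$, which is under control by Proposition \ref{boundJ}. Each iteration adds a factor $\tau^{-\al/2 - \epsilon}$ of short-time singularity; together with the singularities from $\norm*{\Xi_s}_{-\al}$ and the heat-kernel smoothing, the accumulated singular factor is $(t \wedge 1)^{-(\kappa + 5\al)/2}$, which remains integrable in time thanks to the assumption $\al < (1-\kappa)/5$. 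Stochastic integrability is then recovered, as in the proof of Proposition \ref{boundJ}, via the stopping-time argument of \cref{sec:stochasticest}: the paraproduct and Schauder estimates are pathwise between consecutive restart times, and the exponential cost is absorbed into the mass correction $m_*$ through Proposition \ref{expmoments}.

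\textbf{Main obstacle.}
The central difficulty is that the singular Wick power $\<2>_{0,s}$ forces the paraproduct to require $J_{0,s} h$ in a strictly positive Sobolev space $H^{\al + \al'}_x$, a regularity that is not directly available from the $L^2_x$-bound of Proposition \ref{boundJ} and must therefore be gained iteratively. Moreover, $\norm*{\<2>_{0,s}}_{-\al}$ has only exponential moments of finite Gaussian-chaos order, so one cannot simply exponentiate the Duhamel inequality: the stopping-time machinery of \cref{sec:stochasticest} is essential to make the bootstrap pathwise while controlling the exponential cost by Proposition \ref{expmoments}. A further bookkeeping subtlety is to match the cumulative short-time singularities coming from the bootstrap, the paraproduct, and the coming-down-from-infinity estimates for $v_{0,s}$ with the constraint $\kappa + 5 \al < 1$; this is what fixes the admissible range of $\al$ in the statement and explains the sharp exponent $(\kappa + 5\al)/2$.
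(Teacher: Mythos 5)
There is a genuine gap, and it lies exactly at the point the paper flags as crucial. The paper's proof of \cref{betterboundJ} does not generate positive regularity of $J$ by a semigroup bootstrap; it uses the extra dissipation terms in the energy estimate \eqref{enesth1}, post-processed in \cref{twoest}, namely the time-integrated quantities $\int e^{-2m(t-s)}\norm{\nabla J_{0,s}h}_{L^2_x}^2\,\dd s$ and $\int e^{-2m(t-s)}\norm{v_{0,s}J_{0,s}h}_{L^2_x}^2\,\dd s$. In the Duhamel formula (split at $t/2$, not at $0$ or $t_0$) these are paired by Cauchy--Schwarz in time against the singular weights $\norm{v_{0,s}}_{L^\infty_x}^2$ and $\norm{v_{0,s}}_{2\al}^2$, which by \cref{cvest} behave like $s^{-1-\al}$ and $s^{-1-4\al}$; since $s\geq t/2$ in the Duhamel integral, these are harmless and produce precisely the prefactor $(t\wedge1)^{-\frac{\kappa+5\al}{2}}$ (one $\al$ from heat smoothing at regularity $-\al$, $4\al$ from $\norm{v_{0,s}}_{2\al}^2$). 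Your proposal never invokes this information and instead tries to close a sup-in-time weighted Volterra inequality for $\norm{J_{0,s}h}_{H^{\al+\al'}_x}$. That inequality cannot be closed: your coefficient $\norm{\Xi_s}_{-\al}$ contains $\norm{v_{0,s}^2}_{-\al}\lesssim\norm{v_{0,s}}_{L^\infty_x}^2\sim s^{-1-}$, which is not integrable at the lower limit of the Duhamel integral, and this does not improve after restarting at $t_0$, because the restarted remainder starts from the rough data $u_{t_0}$ and exhibits the same coming-down-from-infinity blow-up. No choice of time weights fixes a kernel that is non-integrable at the lower endpoint; the paper's trick of spending only one power of $\norm{v}_{L^\infty_x}$ against $\norm{v_{0,s}J_{0,s}h}_{L^2_x}$ (available only through the energy estimate) is what removes this obstruction.

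Two further points. First, the stopping-time machinery does not transfer to your Gronwall closing as claimed: in the paper it is applied to the exponent $g(s,t)$, which is integrable in time on each stopping window (the $\nabla v$ contribution enters with the power $\tfrac{2\al}{1+\al}<1$, cf.\ \cref{gradvest}), whereas your bootstrap would exponentiate a time integral that diverges; relatedly, your accounting ``each iteration adds $\tau^{-\al/2-\eps}$'' does not reproduce the exponent $5\al$, which has the specific origin described above. Second, the reduction step is stated incorrectly: $J_{t_0,t}$ is not independent of $\mathcal F_{t_0}$ and not equal in law to $J_{0,t-t_0}$, since it depends on $u_{t_0}$ through $v_{t_0,\cdot}$; this part is repairable (conditionally on $\mathcal F_{t_0}$ it is a copy started from $u_{t_0}$, and the relevant bounds are uniform in the initial condition, so H\"older in probability suffices without independence), and the decay factor from \cref{boundJ} should read $e^{-(m-m_*)t_0}$ rather than $e^{-(m-m_*)(t-t_0)}$. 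But without the dissipation terms of \cref{twoest} (or an equivalent $L^2$-in-time control of $v_{0,s}J_{0,s}h$ and $\nabla J_{0,s}h$), the core short-time $L^2_x\to H^\kappa_x$ estimate does not go through as proposed.
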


Here we need $\kappa < 1$ to ensure the integrability of the exponent when  
$t \searrow 0$. Moreover, we again crucially used the ``coming down from infinity" property that ensures that the bound does not depend on the initial data $f$.

\section{Spectral gap inequalities}\label{sec:sgi}

In this section we give our main application of the gradient estimate \cref{thm:weak_grad_est}.
At the core of the argument lies the celebrated method of Bakry and Émery (cf. \cite{B06, BE85, OV00}) to prove $\log$-Sobolev inequalities as well as spectral gap inequalities. 
\begin{thm}
    Let $\dx \lambda = e^{-\psi}\dx{x}$ be a probability measure on a smooth and flat manifold $M$ such that $\psi \in C^{2}(M)$ and $D^2\psi \geq \rho I$ for some $\rho > 0$. Then $\dx\lambda$ satisfies a $\log$-Sobolev inequality with constant $\rho$.
\end{thm}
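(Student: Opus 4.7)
The plan is to follow the classical Bakry--\'Emery interpolation argument via $\Gamma$-calculus. First, associate to $\lambda$ the reversible diffusion semigroup $(P_t)_{t\geq 0}$ generated by $L := \Delta - \nabla \psi \cdot \nabla$; integration by parts on the flat manifold $M$ shows that $L$ is symmetric on $L^2(\lambda)$ and has $\lambda$ as invariant measure. Its carr\'e du champ is $\Gamma(f) = |\nabla f|^2$, and Bochner's formula yields the $\Gamma_2$-identity
\[
    \Gamma_2(f) := \tfrac{1}{2} L \Gamma(f) - \Gamma(f, Lf) = |D^2 f|_{HS}^2 + D^2 \psi (\nabla f, \nabla f).
\]
The assumption $D^2 \psi \geq \rho I$ therefore yields the curvature-dimension condition $\Gamma_2(f) \geq \rho \, \Gamma(f)$, which is the only place the convexity of $\psi$ enters.

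Next, I would run the standard entropic interpolation. For smooth $f > 0$ with $\int f \, \dx{\lambda} = 1$, fix $t > 0$ and define
\[
    \Lambda(s) := P_s\bra*{\phi(P_{t-s} f)}, \qquad \phi(x) := x \log x, \qquad s \in [0, t].
\]
Writing $u_s := P_{t-s} f$, the chain rule $L \phi(u) = \phi'(u) L u + \phi''(u) \Gamma(u)$ together with $\partial_s u_s = -L u_s$ gives $\Lambda'(s) = P_s\bra*{\Gamma(u_s)/u_s}$. A second differentiation, using $L \Gamma(u) = 2 \Gamma(u, Lu) + 2 \Gamma_2(u)$ together with the curvature-dimension bound, leads after some manipulation to the pointwise differential inequality $\Lambda''(s) \geq 2 \rho \Lambda'(s)$. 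Gronwall then gives $\Lambda'(s) \leq e^{-2 \rho (t-s)} \Lambda'(t)$ on $[0, t]$, and integrating from $0$ to $t$ delivers the local $\log$-Sobolev estimate
\[
    P_t(\phi(f)) - \phi(P_t f) \leq \frac{1 - e^{-2 \rho t}}{2 \rho} \, P_t\bra*{\frac{\Gamma(f)}{f}}.
\]

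Finally, I would integrate against $\lambda$, use invariance to remove the outer $P_t$, and let $t \to \infty$. Ergodicity of $P_t$ gives $P_t f \to \int f \, \dx{\lambda} = 1$, so $\phi(P_t f) \to 0$, and one obtains
\[
    \Ent_\lambda(f) \leq \frac{1}{2 \rho} \int \frac{|\nabla f|^2}{f} \, \dx{\lambda},
\]
which is the announced $\log$-Sobolev inequality with constant $\rho$; a standard density argument extends the bound to the natural admissible class. The main technical obstacle I anticipate is the derivation of $\Lambda''(s) \geq 2 \rho \Lambda'(s)$: this amounts to a careful $\Gamma_2$-computation on $\Gamma(u_s)/u_s$ (equivalently on $4 \Gamma(\sqrt{u_s})$), where the positive Hessian term $|D^2(\cdot)|_{HS}^2$ coming from Bochner's formula must absorb several mixed terms before the single bound $D^2 \psi \geq \rho I$ can be invoked to conclude. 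Once this is in place the remainder is Gronwall plus ergodicity.
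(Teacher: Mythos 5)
The paper does not actually prove this statement: it is quoted as the classical Bakry--\'Emery criterion, with the proof delegated to the cited literature (\cite{BE85}, \cite{B06}, and the treatment in \cite{BGL14}), so there is no in-paper argument to compare yours against. What you propose is precisely the canonical semigroup proof, and it is correct in outline. The ingredients are all in the right place: on a flat $M$ Bochner gives $\Gamma_2(f)=|D^2f|_{HS}^2+D^2\psi(\nabla f,\nabla f)\geq\rho\,\Gamma(f)$, i.e.\ the $CD(\rho,\infty)$ condition; the interpolation $\Lambda(s)=P_s(\phi(P_{t-s}f))$ has $\Lambda'(s)=P_s(u_s\Gamma(\log u_s))$, and the step you single out as the technical core is indeed the known identity $\Lambda''(s)=2P_s\big(u_s\,\Gamma_2(\log u_s)\big)$ (this is where the diffusion/chain-rule property of $L$ enters), which under $CD(\rho,\infty)$ yields $\Lambda''\geq 2\rho\Lambda'$, hence the local inequality $P_t(f\log f)-P_tf\log P_tf\leq\frac{1-\ee^{-2\rho t}}{2\rho}P_t(\Gamma(f)/f)$, and integration against $\lambda$ plus $t\to\infty$ gives $\Ent_\lambda(f)\leq\frac{1}{2\rho}\int\Gamma(f)/f\,\dd\lambda$, i.e.\ LSI with constant $\rho$. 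Two points you should make explicit to close the argument: the limit $t\to\infty$ needs $P_tf\to\int f\,\dd\lambda$ strongly enough to kill $\phi(P_tf)$, which is not an extra hypothesis but follows from the same curvature bound (e.g.\ via the commutation estimate $\Gamma(P_tf)\leq\ee^{-2\rho t}P_t\Gamma(f)$, or via the Poincar\'e inequality implied by $CD(\rho,\infty)$), so it should be derived rather than invoked as ``ergodicity''; and the differentiations of $s\mapsto\Lambda(s)$ and the final density step require the usual regularization (take $f$ smooth, bounded, bounded away from $0$, and assume the setting is such that the semigroup is conservative), which is exactly the standing framework in the references the paper points to.
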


Hence by the convexity of the potential it is natural to expect that \eqref{eq:dynamic_phi4} even satisfies a $\log$-Sobolev inequality but due to the singular nature of the equation we are only able to prove a spectral gap inequality. At this point we want to mention that in \cite{Ka06} it was shown that \eqref{eq:dynamic_phi4} does satisfy a $\log$-Sobolev inequality when $d=1$\footnote{and the equation does \emph{not} require any renormalization} with respect to $L^2_x$. In the following we also want to point out how the required renormalization procedure obstructs us from proving an $\log$-Sobolev inequality.
The first step is to show the following identity (cf. \cite[p. 131, (3.1.21)]{BGL14}), the proof of of which can be found in 
the appendix, \cref{sec:BEI}.

\begin{prop}\label{BEtrick}
    The following identity holds for every $t>0$ and $F \in \mathcal{F}C^{\infty}_b$, 
    \begin{align}\label{firstident}
    P_t F^2(f)  - \bra*{P_tF(f)}^2 = 2 \int_0^t P_{t-s}\bra*{\norm*{DP_s F}^2_{L^2_x}}(f) \dx{s} \quad \nu\text{-a.s. in} \  f.
\end{align}
\end{prop}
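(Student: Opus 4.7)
The plan is to follow the classical Bakry--\'Emery interpolation at the level of the semigroup. I would introduce the auxiliary function
\begin{equation}
\phi(s) := P_s\bra*{(P_{t-s}F)^2}(f), \qquad s\in[0,t],
\end{equation}
which, because $P_0 = \id$ and by the semigroup property, satisfies $\phi(0) = (P_t F(f))^2$ and $\phi(t) = P_t F^2(f)$. Thus \eqref{firstident} reduces, after the change of variables $r = t-s$, to the derivative identity
\begin{equation}
\phi'(s) = 2 P_s\bra*{\norm*{DP_{t-s}F}^2_{L^2_x}}(f).
\end{equation}

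To derive this, I would apply It\^o's formula to the time-dependent functional $(s,u) \mapsto (P_{t-s}F(u))^2$ along the solution $u^f_{0,s}$ of \eqref{eq:dynamic_phi4}. If $L$ denotes the formal generator of the dynamics, then the backward Kolmogorov equation $\partial_s P_{t-s}F = -L P_{t-s}F$ cancels one half of the spatial It\^o correction thanks to the carr\'e du champ identity
\begin{equation}
L\bra*{G^2} - 2 G LG = 2 \norm*{DG}^2_{L^2_x},
\end{equation}
valid for sufficiently regular functionals $G$. This last identity is a direct calculation from the cylindrical structure of \eqref{uderF} together with the fact that $\xi$ is space-time white noise, which makes the diffusion part of \eqref{eq:dynamic_phi4} the identity on $L^2_x$. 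Taking expectations annihilates the martingale terms and leaves exactly the desired derivative formula, which integrated on $[0,t]$ yields \eqref{firstident}.

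The main obstacle is that \eqref{eq:dynamic_phi4} is singular, so $L$ is only formally defined and It\^o cannot be invoked as stated on general cylindrical functionals. I would bypass this by regularization: spatially mollify the noise at scale $\delta$, work with the corresponding Wick-renormalized dynamics $u^{f,\delta}$ and semigroup $P^\delta_s$, for which every object above is classical, carry out the It\^o calculation to obtain \eqref{firstident} with $P$ replaced by $P^\delta$, and finally pass $\delta \searrow 0$. Stability $u^{f,\delta} \to u^f_{0,s}$ handles the left-hand side and the outer semigroup on the right. Inside the integrand one needs to justify $\norm*{DP^\delta_s F}^2_{L^2_x} \to \norm*{DP_s F}^2_{L^2_x}$ together with a dominating bound, which is exactly the $\delta$-uniform version of Theorem~\ref{thm:weak_grad_est} for $P^\delta_s$; such uniformity is available because Proposition~\ref{stochest} as well as the ``coming down from infinity'' estimate in \cref{sec:remv} are stable under mollification, so the stopping time argument of \cref{sec:stochasticest} transfers verbatim to the regularized level. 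The $\nu$-a.s.\ qualifier in $f$ reflects that the relevant dynamics, and the approximation procedure, are only well-posed on the full-measure set $C^{-\al_0}$.
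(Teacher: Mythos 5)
Your interpolation $\phi(s)=P_s\big((P_{t-s}F)^2\big)(f)$ is exactly the right identity to aim for, but the way you propose to make it rigorous leaves two genuine gaps. First, at fixed mollification scale $\delta$ the problem is \emph{not} classical: the regularized equation is still an infinite-dimensional SPDE, and the step in which you apply It\^o's formula to $(s,u)\mapsto (P^\delta_{t-s}F(u))^2$ and invoke the carr\'e du champ identity $L(G^2)-2GLG=2\norm*{DG}^2_{L^2_x}$ requires $G=P^\delta_{t-s}F$ --- which is no longer cylindrical --- to lie in a suitable domain of the generator and to solve a backward Kolmogorov equation; justifying this, even with smooth noise, is precisely the crux of the matter and is not addressed (one would need a further finite-dimensional/Galerkin reduction or a regularity theory for $P^\delta_s$). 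Second, the passage $\delta\searrow 0$ inside the time integral needs $\norm*{DP^\delta_sF}^2_{L^2_x}\to\norm*{DP_sF}^2_{L^2_x}$ together with a dominating bound, i.e.\ convergence of the linearized flows $J^{\delta}\to J$ plus $\delta$-uniform versions of \cref{stochest}, of the coming-down-from-infinity estimates and of the whole stopping-time argument behind \cref{thm:weak_grad_est}. Nothing of this transfers ``verbatim'': it would amount to redoing the stability and moment analysis of the paper with $\delta$-uniform constants and an additional convergence argument for $v^\delta$ and $J^\delta$, none of which is available off the shelf. As written, the hardest steps of your plan are asserted rather than proved.

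The paper avoids both issues by a different rigorous implementation of the same interpolation: it works with the Dirichlet form $\mathcal{E}(F,F)=\int\norm*{DF}^2_{L^2_x}\,\dd\nu$, whose closability and quasi-regularity (\cite{RZZ17I}, \cite{AR91}) provide a generator $\mathcal{L}$ and a semigroup which coincides $\nu$-a.s.\ with the dynamically defined $P_t$; the martingale (Fukushima-type) decomposition of \cite{AR91} then gives, for $P_{r_2}F\in D(\mathcal{E})$, exactly the decomposition $P_{r_2}F(u^f_r)=P_{r_2}F(f)+\int_0^r\mathcal{L}(P_{r_2}F)(u^f_s)\,\dd s+M_r$ with $\langle M\rangle_r=\int_0^r\norm*{DP_{r_2}F(u^f_s)}^2_{L^2_x}\,\dd s$, so the carr\'e du champ identity comes for free from the form, with no renormalized generator and no mollification limit; differentiating $H(r_1,r_2)=P_{t-r_1}(P_{r_2}F)^2(f)$ in each variable and summing at $r_1=r_2=s$ yields the derivative identity, which is then integrated in $s$. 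This also explains the $\nu$-a.s.\ qualifier: it stems from the $\nu$-a.e.\ identification of the Dirichlet-form semigroup with $P_t$ (and the quasi-everywhere validity of the decomposition), not merely from well-posedness of the dynamics on $C^{-\al_0}$ as you suggest.
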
 

We are now in position to prove \cref{thm:dynamic_phi4_sg} and \cref{cor:phi4_sg}.

\begin{proof}[Proof of \cref{thm:dynamic_phi4_sg} and \cref{cor:phi4_sg}] We apply Theorem~\ref{thm:weak_grad_est}
combined with \eqref{firstident} and the fact that $P_t$ is a Markov semigroup yielding  
\begin{align}
    P_t \bra*{F^2} - \bra*{P_t F}^2
    \lesssim  \int_0^t (s\wedge1)^{-\kappa - \eps} \ee^{-2\bra*{m - m_*}s} \, \dx{s} \ P_t\norm*{D F}^2_{H^{-\kappa}_x}.
\end{align}
Finally, choosing $m>m_*$ and noting that
\begin{equs}
 \int_0^\infty (s\wedge1)^{-\kappa - \eps} \ee^{-2\bra*{m - m_*}s} \, \dx{s} \lesssim \frac{1}{(m-m_*)^{1-\kappa-\eps}}\vee
 \frac{1}{m-m_*},
\end{equs}
we appeal to ergodicity (cf. \cite[p. 1241, Corollary 6.6]{TW18}) letting $t\nearrow\infty$ to obtain \eqref{eq:phi4_sg}.
\end{proof}

\section{Proof of intermediate statements}\label{sec:proofs}
In this section we collect the proofs of the intermediate statements missing from the previous section.
\begin{proof}[Proof of \cref{enest}]
    Testing the equation \eqref{inieq} with $J_{s, t}h$ yields
    \begin{align}
       & \frac{1}{2} \partial_t \norm*{J_{s, t}h}^2_{L^2_x} + \norm*{\nabla J_{s, t}h}^2_{L^2_x} + m \norm*{J_{s, t}h}^2_{L^2_x} + 3 \norm*{v_{s,t} \bra*{J_{s,t}h}}^2_{L^2_x}
        \\
        & \quad = -6 \bra*{\<1>_{s,t}, v_{s,t} \bra*{J_{s,t}h}^2}_{L^2_x} 
        - 3 \bra*{\<2>_{s,t}, \bra*{J_{s,t}h}^2}_{L^2_x} 
        + 3 c_{t-s, \infty} \norm*{J_{s, t}h}^2_{L^2_x}. \label{energyeq}
    \end{align}
    We start by estimating $\abs*{\bra*{\<1>_{s,t}, v_{s,t} \bra*{J_{s,t}h}^2}_{L^2_x}}$.
    To this end, we apply \cite[Proposition A.8]{TW18} to get
    \begin{align}
        &\abs*{\bra*{\<1>_{s,t}, v_{s,t} \bra*{J_{s,t}h}^2}_{L^2_x}} \lesssim  \norm*{\<1>_{s,t}}_{-\al} \norm*{v_{s,t} \bra*{J_{s,t}h}^2}_{B_{1,1}^{\al}}
    \end{align}
    and then use Proposition A.9 in \cite{TW18} such that we end up with
    \begin{align}
        \norm*{v_{s,t} \bra*{J_{s,t}h}^2}_{B_{1,1}^{\al}} \lesssim \norm*{ v_{s,t}\bra*{J_{s,t}h}^2}^{1- \al}_{L^1_x} \norm*{\nabla\bra*{ v_{s,t}\bra*{J_{s,t}h}^2}}^{\al}_{L^1_x} + \norm*{ v_{s,t}\bra*{J_{s,t}h}^2}_{L^1_x}.
    \end{align}
    Moreover, the Cauchy--Schwarz inequality and the chain rule yield
    \begin{align}
        &\norm*{ v_{s,t}\bra*{J_{s,t}h}^2}^{1- \al}_{L^1_x} \norm*{\nabla\bra*{ v_{s,t}\bra*{J_{s,t}h}^2}}^{\al}_{L^1_x} + \norm*{ v_{s,t}\bra*{J_{s,t}h}^2}_{L^1_x} \\
        & \quad\lesssim \norm*{J_{s, t}h}^{1-\al}_{L^2_x} \norm*{ v_{s,t}\bra*{J_{s,t}h}}^{1-\al}_{L^2_x} \norm*{\bra*{J_{s,t}h}^2 \nabla v_{s,t} + 2 v_{s,t} \bra*{J_{s,t}h} \nabla J_{s, t}h}^{\al}_{L^1_x} + \norm*{J_{s, t}h}_{L^2_x} \norm*{ v_{s,t}\bra*{J_{s,t}h}}_{L^2_x}.
    \end{align}
    The Cauchy--Schwarz inequality again implies
    \begin{align}
        \norm*{\bra*{J_{s,t}h}^2 \nabla v_{s,t} + 2 v_{s,t} \bra*{J_{s,t}h} \nabla J_{s, t}h}^{\al}_{L^1_x} \lesssim \norm*{\nabla v_{s,t}}^{\al}_{L^\infty_x} \norm*{J_{s, t}h}_{L^2_x}^{2\al} + 2^{\al} \norm*{v_{s,t} \bra*{J_{s,t}h}}^{\al}_{L^2_x} \norm*{\nabla J_{s, t}h}^{\al}_{L^2_x}.
    \end{align}
    Hence we have shown that
    \begin{align}
        \abs*{\bra*{\<1>_{s,t}, v_{s,t} \bra*{J_{s,t}h}^2}_{L^2_x}} &\lesssim \norm*{\<1>_{s,t}}_{-\al} \norm*{\nabla v_{s,t}}^{\al}_{L^\infty_x} \norm*{ v_{s,t}\bra*{J_{s, t}h}}^{1-\al}_{L^2_x} \norm*{J_{s, t}h}^{1+\al}_{L^2_x} \\
        & \quad+ \norm*{\<1>_{s,t}}_{-\al} \norm*{J_{s, t}h}^{1-\al}_{L^2_x} \norm*{ v_{s,t}\bra*{J_{s,t}h}}_{L^2_x} \norm*{\nabla J_{s, t}h}^{\al}_{L^2_x} \\
        & \quad + \norm*{\<1>_{s,t}}_{-\al} \norm*{J_{s, t}h}_{L^2_x} \norm*{ v_{s,t}\bra*{J_{s,t}h}}_{L^2_x} \\
        & \quad = I_1 + I_2 + I_3.
    \end{align}
    Then we have by Young's inequality for some $\lambda > 0$ to be chosen later
    \begin{align}
        I_3 = \norm*{\<1>_{s,t}}_{-\al} \norm*{J_{s, t}h}_{L^2_x} \norm*{ v_{s,t}\bra*{J_{s,t}h}}_{L^2_x} \leq \frac{1}{2 \lambda} \norm*{\<1>_{s,t}}_{-\al}^2 \norm*{J_{s, t}h}_{L^2_x}^2 + \frac{\lambda}{2} \norm*{ v_{s,t}\bra*{J_{s,t}h}}^2_{L^2_x}
    \end{align}
    and
    \begin{align}
        I_1 &= \norm*{\<1>_{s,t}}_{-\al} \norm*{\nabla v_{s,t}}^{\al}_{L^\infty_x} \norm*{ v_{s,t}\bra*{J_{s,t}h}}^{1-\al}_{L^2_x} \norm*{J_{s, t}h}^{1+\al}_{L^2_x} \\
        &\leq \frac{1 + \al}{2\lambda^{\frac{1 + \al}{2}}} \norm*{\<1>_{s,t}}_{-\al}^{\frac{2}{1+\al}} \norm*{\nabla v_{s,t}}_{L^\infty_x}^{\frac{2\al}{1 + \al}} \norm*{J_{s, t}h}_{L^2_x}^2 + \frac{(1-\al)\lambda^{\frac{2}{1-\al}}}{2} \norm*{ v_{s,t}\bra*{J_{s,t}h}}^2_{L^2_x} 
    \end{align}
    as well as
    \begin{align}
        I_2 &= \norm*{\<1>_{s,t}}_{-\al} \norm*{J_{s, t}h}^{1-\al}_{L^2_x} \norm*{ v_{s,t}\bra*{J_{s,t}h}}_{L^2_x} \norm*{\nabla J_{s, t}h}^{\al}_{L^2_x} \\
        &\leq \frac{1}{2\lambda} \norm*{\<1>_{s,t}}^2_{-\al} \norm*{J_{s, t}h}_{L^2_x}^{2(1-\al)} \norm*{\nabla J_{s, t}h}^{2\al}_{L^2_x} + \frac{\lambda}{2} \norm*{v_{s,t}\bra*{J_{s,t}h} }^2_{L^2_x} \\
        &\leq \frac{1-\al}{2^{\frac{1}{1-\al}} \lambda^{\frac{2}{1-\al}}} \norm*{\<1>_{s,t}}^{\frac{2}{1-\al}}_{-\al} \norm{J_{s, t}h}^{2}_{L^2_x} + \al \lambda^{\frac{1}{\al}} \norm*{\nabla J_{s, t}h}^2_{L^2_x} + \frac{\lambda}{2} \norm*{ v_{s,t}\bra*{J_{s,t}h}}^2_{L^2_x}.
    \end{align}
    For the second term on the right hand side of \eqref{energyeq} we proceed similarly. First of all, Proposition A.8 in \cite{TW18} yields
    \begin{align}
        \abs*{\bra*{\<2>_{s,t}, \bra*{J_{s,t}h}^2}_{L^2_x}} &\leq \norm*{\<2>_{s,t}}_{-\al} \bra*{\norm*{J_{s, t}h}^{2(1-\al)}_{L^2_x} \norm*{2\bra*{J_{s,t}h} \nabla J_{s, t}h}^{\al}_{L^1_x} + \norm*{J_{s, t}h}^2_{L^2_x}} 
    \end{align}
    and hence the Cauchy--Schwarz inequality combined with Young's inequality with the same $\lambda > 0$ as before yields
    \begin{align}
        \abs*{\bra*{\<2>_{s,t}, \bra*{J_{s,t}h}^2}_{L^2_x}} 
        &\leq 2^{\al} \frac{2-\al}{2 \lambda^{\frac{2}{2-\al}}} \norm*{\<2>_{s,t}}_{-\al}^{\frac{2}{2-\al}} \norm*{J_{s, t}h}_{L^2_x}^2 + \frac{\al \lambda^{\frac{2}{\al}}}{2} \norm*{\nabla J_{s, t}h}^2_{L^2_x}
        + \norm*{\<2>_{s,t}}_{-\al} \norm*{J_{s, t}h}^2_{L^2_x}.
    \end{align}
    Then we set
    \begin{align}
        g(s, t) &:=  \frac{1}{2 \lambda} \norm*{\<1>_{s,t}}_{-\al}^2  + \frac{1 + \al}{2\lambda^{\frac{1 + \al}{2}}} \norm*{\<1>_{s,t}}_{-\al}^{\frac{2}{1+\al}} \norm*{\nabla v_{s,t}}_{L^\infty_x}^{\frac{2\al}{1 + \al}} + \frac{1-\al}{2^{\frac{1}{1-\al}} \lambda^{\frac{2}{1-\al}}} \norm*{\<1>_{s,t}}^{\frac{2}{1-\al}}_{-\al} 
         \\
         & \quad+ \norm*{\<2>_{s,t}}_{-\al} + 2^{\al} \frac{2-\al}{2 \lambda^{\frac{2}{2-\al}}} \norm*{\<2>_{s,t}}_{-\al}^{\frac{2}{2-\al}} 
         + 3c_{t-s, \infty} .
    \end{align}
    By choosing $\lambda$ small enough, we can absorb some of the terms into $\norm*{\nabla J_{s,t}h}_{L^2_x}^2$ respectively $\norm*{v_{s, t} \bra*{J_{s, t}h}}_{L^2_x}^2$ into the right hand side and we end up with the estimate
    \begin{align}\label{energyeq2}
        &\frac{1}{2} \partial_t \norm*{J_{s, t}h}^2_{L^2_x} + m \norm*{J_{s, t}h}^2_{L^2_x} + \frac{1}{2} \norm*{\nabla J_{s, t}h}_{L^2_x}^2 + \frac{1}{2} \norm*{v_{s, t} \bra*{J_{s, t}h}}^2_{L^2_x} \leq g(s,t) \norm*{J_{s, t}h}^2_{L^2_x}.
    \end{align}
   Then the chain rule combined with \eqref{energyeq2} yields 
    \begin{align}
        &\partial_t \bra*{ \ee^{2mt - 2\int_0^t g(s, r) \dx{r}} \norm*{J_{s, t}h}_{L^2_x}^2}  
        + \ee^{2mt - 2\int_0^t g(s, r) \dx{r}} \norm*{\nabla J_{s, t}h}_{L^2_x}^2 + \ee^{2mt - 2\int_0^t g(s, r) \dx{r}} \norm*{v_{s, t} \bra*{J_{s, t}h}}^2_{L^2_x} \leq 0. \label{energyeq3}
    \end{align}
    Integrating \eqref{energyeq3} from $t'$ to $t$ we end up with
    \begin{align}
        &\norm*{J_{s, t}h}_{L^2_x}^2 + \int_{t'}^t \ee^{-2m(t-r) + 2 \int_r^t g(s, r') \dx{r'}} \norm*{\nabla J_{s, r}h}_{L^2_x}^2 \dx{r} 
        + \int_{t'}^t \ee^{-2m(t-r) + 2 \int_r^t g(s, r') \dx{r'} } \norm*{v_{s, t} \bra*{J_{s, r}h}}_{L^2_x}^2 \dx{r} 
        \\
        & \quad \leq \ee^{-2m (t-t') + 2 \int_{t'}^{t} g(s, r) \dx{r}} \norm*{J_{s, t'}h}^2_{L^2_x}.
    \end{align}
\end{proof}

\begin{proof}[Proof of \cref{twoest}]
    The estimate \eqref{energyeq2} yields for $s = 0$
    \begin{align}
        \partial_t \bra*{\ee^{2mt} \norm*{J_{0, t}h}_{L^2_x}^2 } + \ee^{2mt} \bra*{\norm*{\nabla J_{0, t}}_{L^2_x}^2 + \norm*{v_{0, t} J_{0, t}h}_{L^2_x}^2} \leq 2 \ee^{2mt}g(0, t) \norm*{J_{0, t}h}_{L^2_x}^2.
    \end{align}
    Then we integrate from $t'$ to $t$ to obtain
    \begin{align}
        &\norm*{J_{0, t}h}_{L^2_x}^2  + \int_{t'}^t \ee^{-2m\bra*{t-s}} \bra*{\norm*{\nabla J_{0, s}}_{L^2_x}^2 + \norm*{v_{0, s} J_{0, s}h}_{L^2_x}^2} \dx{s} \\ 
        & \quad \leq \ee^{-2m\bra*{t-t'}} \norm*{J_{0, t'}h}^2_{L^2_x} + 2 \int_{t'}^t \ee^{-2m\bra*{t-s}} g(0, s) \norm*{J_{0, s}h}_{L^2_x}^2 \dx{s}.
    \end{align}
    Applying \eqref{energyestrand} to $\norm*{J_{0, t'}h}^2_{L^2_x}$ respectively to $\norm*{J_{0, s}h}_{L^2_x}^2$ yields the assertion.
\end{proof}
\begin{proof}[Proof of \cref{betterboundJ}]
    First of all, Duhamel's formula yields
    \begin{align}
        J_{0, t}h &= S_{\frac{t}{2}} J_{0, \frac{t}{2}}h - 3 \int_{\frac{t}{2}}^t S_{t-s} \set*{ \bra*{v_{0, s}^2 + 2 v_{0, s} \<1>_{0, s} + \<2>_{0, s} - c_{s, \infty}}J_{0, s}h} \dx{s} \\
        & =I_1 + I_2 + I_3 + I_4 + I_5.
    \end{align}
    Then we estimate $I_1$ according to
    \begin{align}
        \norm*{S_{\frac{t}{2}} J_{0, \frac{t}{2}}h}_{\mathcal{B}^{\kappa}_{2,2}}  &\stackrel{\eqref{besovsemi}}{\lesssim} \bra*{t \wedge 1}^{-\frac{\kappa}{2}} \ee^{-m\frac{t}{2}} \norm*{J_{0, \frac{t}{2}}h}_{L^2_x} 
        \stackrel{\eqref{energyestrand}}{\lesssim} \bra*{t \wedge 1}^{-\frac{\kappa}{2}} \ee^{-mt + C \theta^{\gamma}N\bra*{\frac{t}{2}}} \norm*{h}_{L^2_x}.
    \end{align}
   For $I_2$ we further estimate
    \begin{align}
        &\int_{\frac{t}{2}}^t \norm*{S_{t-s} \bra*{v_{0, s}^2 J_{0, s}h}}_{\mathcal{B}^{\kappa}_{2,2}} \dx{s}  \stackrel{\eqref{besovsemi}}{\lesssim} \int_{\frac{t}{2}}^t \bra*{\bra*{t-s} \wedge 1}^{-\frac{\kappa}{2}} \ee^{-m(t-s)} \norm*{v^2_{0, s} J_{0, s}h}_{L^2_x} \dx{s} \\
        &\lesssim
        \int_{\frac{t}{2}}^t \bra*{\bra*{t-s} \wedge 1}^{-\frac{\kappa}{2}} \norm*{v_{0, s}}_{L^\infty_x} \ee^{-m(t-s)} \norm*{v_{0, s} J_{0, s}h}_{L^2_x} \dx{s} \\
        &\lesssim \bra*{\int_{\frac{t}{2}}^t \bra*{\bra*{t-s} \wedge 1}^{-\kappa} \norm*{v_{0, s}}^2_{L^\infty_x} \dx{s}}^{\frac{1}{2}} \bra*{\int_{\frac{t}{2}}^t \ee^{-2m(t-s)} \norm*{v_{0, s} J_{0, s}h}^2_{L^2_x}  \dx{s}}^{\frac{1}{2}} \\
        &\stackrel{\eqref{twoest1}}{\lesssim} \bra*{\int_{\frac{t}{2}}^t \bra*{\bra*{t-s} \wedge 1}^{-\kappa} \norm*{v_{0, s}}^2_{L^\infty_x}\dx{s}}^{\frac{1}{2}} 
         \ee^{-mt} \bra*{e^{2c\theta^{\gamma} N(\frac{t}{2})} + \int_{\frac{t}{2}}^t \ee^{2c\theta^{\gamma}N(s)} g(0, s) \dx{s}}^{\frac{1}{2}} \norm*{h}_{L^2_x}, 
    \end{align}
    where we used again H\"older's inequality in the third step.
    
    Estimating $I_3$ yields
    \begin{align}
        &\int_{\frac{t}{2}}^t \norm*{S_{t-s} \bra*{v_{0, s} \<1>_{0, s} J_{0, s}h}}_{\mathcal{B}^{\kappa}_{2,2}} \dx{s} \stackrel{\eqref{besovsemi}}{\lesssim} \int_{\frac{t}{2}}^t \bra*{\bra*{t-s} \wedge 1}^{-\frac{\kappa + \al}{2}}  \ee^{-m(t-s)} \norm*{v_{0, s} \<1>_{0,s} J_{0, s}h}_{\mathcal{B}^{-\al}_{2,2}} \dx{s} \\
        &\stackrel{\eqref{besovmult1}, \eqref{besovmult2}, \eqref{besovmono}}{\lesssim} \int_{\frac{t}{2}}^t \bra*{\bra*{t-s} \wedge 1}^{-\frac{\kappa + \al}{2}} \norm{\<1>_{0, s}}_{-\al} \norm{v_{0, s}}_{2\al} \ee^{-m(t-s)} \norm{J_{0, s}h}_{\mathcal{B}^{1}_{2,2}} \dx{s} \\
        &\lesssim \bra*{\int_{\frac{t}{2}}^t \bra*{\bra*{t-s} \wedge 1}^{-\kappa - \al} \norm{\<1>_{0, s}}^2_{-\al} \norm{v_{0, s}}^2_{2\al} \dx{s}}^{\frac{1}{2}} \bra*{\int_{\frac{t}{2}}^t \ee^{-2m(t-s)} \norm{J_{0, s}h}^2_{\mathcal{B}^{1}_{2,2}} \dx{s}}^{\frac{1}{2}} \\
        &\stackrel{\eqref{twoest1}}{\lesssim} \bra*{\int_{\frac{t}{2}}^t \bra*{\bra*{t-s} \wedge 1}^{-\kappa - \al} \norm{\<1>_{0, s}}^2_{-\al} \norm{v_{0, s}}^2_{2\al} \dx{s}}^{\frac{1}{2}} \\
        & \quad \times \ee^{-mt} \bra*{\ee^{2c\theta^{\gamma} N(\frac{t}{2})} + \int_{\frac{t}{2}}^t \ee^{2c\theta^{\gamma}N(s)} g(0, s) \dx{s}}^{\frac{1}{2}} \norm*{h}_{L^2_x} \\
        &\lesssim \sup_{0 \leq s \leq t} \norm*{\<1>_{0, s}}_{-\al} \bra*{\int_{\frac{t}{2}}^t \bra*{\bra*{t-s} \wedge 1}^{-\kappa - \al}  \norm{v_{0, s}}^2_{2\al} \dx{s}}^{\frac{1}{2}} \\
        & \quad \times \ee^{-mt} \bra*{\ee^{2c\theta^{\gamma} N(\frac{t}{2})} + \int_{\frac{t}{2}}^t \ee^{2c\theta^{\gamma}N(s)} g(0, s) \dx{s}}^{\frac{1}{2}} \norm*{h}_{L^2_x},
    \end{align}
    using H\"older's inequality in the third step.
    
    The term $I_4$ is estimated via
    \begin{align}
        &\int_{\frac{t}{2}}^t \norm*{S_{t-s} \bra*{\<2>_{0, s} J_{0, s}h}}_{\mathcal{B}^{\kappa}_{2,2}} \dx{s} \stackrel{\eqref{besovsemi}}{\lesssim} \int_{\frac{t}{2}}^t \bra*{\bra*{t-s} \wedge 1}^{-\frac{\kappa + \al}{2}}  \ee^{-m(t-s)} \norm*{\<2>_{0,s} J_{0, s}h}_{\mathcal{B}^{-\al}_{2,2}} \dx{s} \\
        &\stackrel{\eqref{besovmult2}, \eqref{besovmono}}{\lesssim} \int_{\frac{t}{2}}^t \bra*{\bra*{t-s} \wedge 1}^{-\frac{\kappa + \al}{2}} \norm*{\<2>_{0,s}}_{-\al}  \ee^{-m(t-s)} \norm*{J_{0, s}h}_{\mathcal{B}^{1}_{2,2}} \dx{s} \\
        &\lesssim \bra*{\int_{\frac{t}{2}}^t \bra*{\bra*{t-s} \wedge 1}^{-\kappa - \al} \norm*{\<2>_{0,s}}^2_{-\al} \dx{s}}^{\frac{1}{2}} \bra*{\int_{\frac{t}{2}}^t \ee^{-2m(t-s)} \norm*{J_{0, s}h}^2_{\mathcal{B}^{1}_{2,2}} \dx{s}}^{\frac{1}{2}} \\
        &\stackrel{\eqref{twoest1}}{\lesssim} \bra*{\int_{\frac{t}{2}}^t \bra*{\bra*{t-s} \wedge 1}^{-\kappa- \al} \norm*{\<2>_{0,s}}^2_{-\al} \dx{s}}^{\frac{1}{2}}  
        \ee^{-mt} \bra*{\ee^{2c\theta^{\gamma} N(\frac{t}{2})} + \int_{\frac{t}{2}}^t \ee^{2c\theta^{\gamma}N(s)} g(0, s) \dx{s}}^{\frac{1}{2}} \norm*{h}_{L^2_x} \\
        &\lesssim \sup_{0 \leq s \leq t} \norm*{\<2>_{0, s}}_{-\al} \bra*{\int_{\frac{t}{2}}^t \bra*{\bra*{t-s} \wedge 1}^{-\kappa - \al} \dx{s}}^{\frac{1}{2}} 
        \ee^{-mt} \bra*{\ee^{2c\theta^{\gamma} N(\frac{t}{2})} + \int_{\frac{t}{2}}^t \ee^{2c\theta^{\gamma}N(s)} g(0, s) \dx{s}}^{\frac{1}{2}} \norm*{h}_{L^2_x},
    \end{align}
    where again we have used H\"older's inequality in the third step.
    
    Finally, we estimate $I_5$
    \begin{align}
        &\int_{\frac{t}{2}}^t \norm*{S_{t-s} c_{s, \infty} J_{0, s}h}_{\mathcal{B}^{\kappa}_{2,2}} \dx{s} \stackrel{\eqref{besovsemi}}{\lesssim} \int_{\frac{t}{2}}^t  s^{-\frac{\beta}{2}}  \ee^{-m(t-s)} \norm*{J_{0, s}h}_{B^1_{2,2}} \dx{s} \\
        &\lesssim \bra*{\int_{\frac{t}{2}}^t s^{-\beta} \dx{s}}^{\frac{1}{2}} \bra*{\int_{\frac{t}{2}}^t \ee^{-2m(t-s)} \norm*{J_{0, s}h}^2_{L^2_x} \dx{s}}^{\frac{1}{2}} \\
        &\stackrel{\eqref{twoest1}}{\lesssim} \bra*{\int_{\frac{t}{2}}^t  s^{-\beta} \dx{s}}^{\frac{1}{2}} 
        \ee^{-mt} \bra*{\ee^{2c\theta^{\gamma} N(\frac{t}{2})} + \int_{\frac{t}{2}}^t \ee^{2c\theta^{\gamma}N(s)} g(0, s) \dx{s}}^{\frac{1}{2}} \norm*{h}_{L^2_x},
    \end{align}
    where we have used H\"older's inequality in the second step.
    
    Then we use monotonicity of $t \mapsto N(t)$ to infer
    \begin{align}
        \int_{\frac{t}{2}}^t \ee^{2c\theta^{\gamma}N(s)} g(0, s) \dx{s} \leq \ee^{2c\theta^{\gamma}N(t)} \int_{\frac{t}{2}}^t g(0, s) \dx{s} 
    \end{align}
    which all in all yields
    \begin{align}
        \norm*{J_{0, t}h}_{\mathcal{B}^{\kappa}_{2, 2}}  
        & \lesssim \ee^{-mt + C \theta^{\gamma} N(t)} \bra*{1 + \int_{\frac{t}{2}}^t g(0, s) \dx{s}} \norm*{h}_{L^2_x}
        \\
        & \quad \times \bigg(\bra*{t \wedge 1}^{-\frac{\kappa}{2}} + 
        \bra*{\int_{\frac{t}{2}}^t \bra*{\bra*{t-s} \wedge 1}^{-\kappa} \norm*{v_{0, s}}^2_{L^\infty_x}\dx{s}}^{\frac{1}{2}} 
        \\
        & \quad \quad + \sup_{0 \leq s \leq t} \norm*{\<1>_{0, s}}_{-\al} \bra*{\int_{\frac{t}{2}}^t \bra*{\bra*{t-s} \wedge 1}^{-\kappa - \al}  \norm{v_{0, s}}^2_{2\al} \dx{s}}^{\frac{1}{2}} 
        \\
        & \quad \quad + \sup_{0 \leq s \leq t}  \norm*{\<2>_{0, s}}_{-\al} \bra*{\int_{\frac{t}{2}}^t \bra*{\bra*{t-s} \wedge 1}^{-\kappa - \al} \dx{s}}^{\frac{1}{2}} + \bra*{\int_{\frac{t}{2}}^t  s^{-\beta} \dx{s}}^{\frac{1}{2}}\bigg).
    \end{align}
    Using the definition of $g$ we see that
    \begin{align}
        \int_{\frac{t}{2}}^t g(0, s) \dx{s} &\lesssim t \sup_{0 \leq s \leq t} \norm*{\<1>_{0, s}}_{-\al} + \sup_{0 \leq s \leq t} \norm*{\<1>_{0, s}}^{\frac{2}{1+\al}}_{-\al} \int_{\frac{t}{2}}^t \norm*{\nabla v_{0, s}}^{\frac{2\al}{1+\al}}_{L^\infty_x} \dx{s} + t \sup_{0 \leq s \leq t} \norm*{\<1>_{0, s}}^{\frac{2}{1-\al}}_{-\al} \\
        & \quad + t \sup_{0 \leq s \leq t} \norm*{\<2>_{0, s}}^{\frac{2}{2-\al}}_{-\al} + t^{1-\beta}
    \end{align}
    and for any $p \geq 1$ we can estimate
    \begin{align}
        \int_{\frac{t}{2}}^t \E{\norm*{\nabla v_{0, s}}^{\frac{2\al p}{1+\al}}_{L^\infty_x}}^{\frac{1}{p}} \dx{s} \stackrel{\eqref{gradvest}}{\lesssim} \int_{\frac{t}{2}}^t s^{-\frac{(1 + \beta) 2\al }{1 + \al}}\dx{s} \lesssim t^{1-\frac{(1 + \beta) 2\al }{1 + \al}}.
    \end{align}
    Moreover, by Proposition~\ref{stochest} for every $p<\infty$ there exists $r>0$ such that 
    \begin{align}
        & \E{\sup_{0 \leq s \leq t} \norm*{\<1>_{0, s}}^p_{-\al}}^{\frac{1}{p}} \lesssim (1+t)^r, \label{estsup1}
        \\
        & \E{\sup_{0 \leq s \leq t}\norm*{\<2>_{0, s}}^p_{-\al}}^{\frac{1}{p}} \lesssim (1+t)^r. \label{estsup2}
    \end{align}
    Any positive power of $t$ can brutally be bounded by $C_{\sigma} e^{\sigma t}$ for $\sigma > 0$, thus we have for any $p \geq 1$
    \begin{align}
        \E{\bra*{\int_{\frac{t}{2}}^t g(0, s) \dx{s}}^p}^{\frac{1}{p}} \lesssim \ee^{\sigma t} 
    \end{align}
    where we implicitly used H\"older's inequality in expectation.
    
    Also, again for any $p \geq 1$ we have
    \begin{align}
        \bra*{\int_{\frac{t}{2}}^t \bra*{\bra*{t-s} \wedge 1}^{-\kappa} \E{\norm*{v_{0, s}}^{p}_{L^\infty_x}}^{\frac{2}{p}}\dx{s}}^{\frac{1}{2}} \stackrel{\eqref{cvest}}{\lesssim} \bra*{\int_{\frac{t}{2}}^t \bra*{\bra*{t-s} \wedge 1}^{-\kappa} s^{-1-\al} \dx{s}}^{\frac{1}{2}} \lesssim \bra*{t \wedge 1}^{-\frac{\kappa + \al}{2}}
    \end{align}
    and similarly
    \begin{align}
        \bra*{\int_{\frac{t}{2}}^t \bra*{\bra*{t-s} \wedge 1}^{-\kappa - \al}  \E{\norm{v_{0, s}}^{p}_{2\al}}^{\frac{2}{p}} \dx{s}}^{\frac{1}{2}} &\stackrel{\eqref{cvest}}{\lesssim} \bra*{t \wedge 1}^{-\frac{\kappa + 5 \al}{2}}.
    \end{align} 
    Dividing by $\norm*{h}_{L^2_x}$, taking the supremum and using \eqref{estsup1}, \eqref{estsup2} and \eqref{expmoments}, we conclude that
    \begin{align}
        \E{\norm*{J_{0, t}}^{p}_{L^2_x \to H^{\kappa}_x}}^{\frac{1}{p}} \lesssim_{\alpha,\kappa, L} 
         (t\wedge1)^{- \frac{\kappa + 5\al}{2}} \ee^{-\bra*{m - 10\sigma - \frac{2\ln2}{\theta}}t},
    \end{align}
    where we have used H\"older's inequality in probability repeatedly.
\end{proof}

\begin{appendices}\label{sec:app}

\addtocontents{toc}{\protect\setcounter{tocdepth}{0}}

\section{Estimate on the renormalization constant}

\begin{prop}\label{renormc}
   The following estimate holds for any $\beta\in(0,1)$ and $t>0$,
   \begin{equs}
    c_{t,\infty} = 2 \int_t^\infty \dd s \,H_{2s}(0) \lesssim_\beta t^{-\frac{\beta}{2}}.    
   \end{equs}
\end{prop}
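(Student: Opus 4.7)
The plan is to reduce the claim to a pointwise bound on the on-diagonal heat kernel $H_{2s}(0)$ and then integrate that bound directly.

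First I would represent $H_t$ on $\T^2 = \R^2/L\Z^2$ as a periodisation of the Euclidean heat kernel,
\begin{equs}
 H_{2s}(0) = \frac{\ee^{-2ms}}{8\pi s}\sum_{n\in\Z^2}\ee^{-L^2\abs{n}^2/(8s)},
\end{equs}
and compare the sum against the Gaussian integral $\int_{\R^2}\ee^{-L^2\abs{y}^2/(8s)}\,\dd y = 8\pi s/L^2$ by a standard Riemann-sum estimate (the integrand is radial and decreasing). This yields the uniform pointwise bound
\begin{equs}
 H_{2s}(0) \leq \frac{\ee^{-2ms}}{8\pi s} + \frac{\ee^{-2ms}}{L^2}, \qquad s>0,
\end{equs}
in which the $s^{-1}$ divergence reflects the logarithmic singularity of the two-dimensional Green's function and the constant term reflects the zeroth Fourier mode dominating at large $s$.

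Second, I would split the integral at $s=1$. For $t\in(0,1]$ the contribution from $[1,\infty)$ is a finite constant depending on $m$ and $L$ thanks to the exponential decay from the mass, while the contribution from $[t,1]$ is controlled by
\begin{equs}
 \int_t^1 H_{2s}(0)\,\dd s \leq \frac{1}{L^2} + \frac{1}{8\pi}\int_t^1\frac{\dd s}{s} = \frac{1}{L^2} + \frac{\log(1/t)}{8\pi}.
\end{equs}
For $t\geq 1$ the entire integral is dominated by $C_m\ee^{-2mt}$, which is $\lesssim_\beta t^{-\beta/2}$ since exponentials beat polynomials.

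The only delicate point is converting the logarithmic factor $\log(1/t)$ into the required polynomial bound: for every $\beta\in(0,1)$ the map $t\mapsto t^{\beta/2}\log(1/t)$ is bounded on $(0,1]$, hence $\log(1/t)\leq C_\beta\,t^{-\beta/2}$ there. Combining the two regimes gives $c_{t,\infty}\lesssim_\beta t^{-\beta/2}$. This final step is the main (albeit minor) obstacle: the loss of an arbitrary polynomial factor is inherent to the $d=2$ logarithmic singularity and is precisely what prevents the sharper $\beta=0$ bound.
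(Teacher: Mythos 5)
Your proof is correct, but it takes a genuinely different route from the paper. The paper computes $c_{t,\infty}$ directly in Fourier space, $c_{t,\infty}=\sum_{k\in\Z^2}\frac{\ee^{-t(m+|k|^2)}}{m+|k|^2}$, and applies the elementary bound $\ee^{-t(m+|k|^2)}\lesssim_\beta t^{-\beta/2}(m+|k|^2)^{-\beta/2}$ termwise, concluding from the summability of $|k|^{-2-\beta}$ in two dimensions; this is a two-line argument. You instead stay in physical space: you periodize the Euclidean heat kernel to get the on-diagonal bound $H_{2s}(0)\lesssim \ee^{-2ms}(s^{-1}+L^{-2})$, integrate, and then trade the resulting logarithm for the polynomial loss $t^{-\beta/2}$. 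Your route is a bit longer but buys more: it yields the sharper estimate $c_{t,\infty}\lesssim_{m,L}1+\log(1/t)$ for $t\le1$, of which the stated bound is a weakening, and it makes transparent that the $\beta$-loss is purely the cost of converting the two-dimensional logarithmic singularity into a power. Two cosmetic caveats: the comparison $\sum_{n\ne0}\ee^{-c|n|^2}\le\int_{\R^2}\ee^{-c|y|^2}\,\dd y$ is not literally a one-line ``radial decreasing Riemann sum'' fact in $d=2$ (the naive lattice-vs-integral comparison can fail for general radial decreasing functions, e.g.\ indicators of balls), but up to a harmless constant it follows by writing the two-dimensional theta sum as the square of the one-dimensional one, and constants are irrelevant for $\lesssim_\beta$; and your constant in the regime $t\ge1$ depends on $m$ and degrades as $m\searrow0$, exactly as the zero mode $k=0$ does in the paper's own proof, which is harmless since $m>0$ is fixed and only uniformity for large $m$ matters in this paper.
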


\begin{proof}
    By a simple computation in Fourier space we have that 
    \begin{align}
        c_{t, \infty} = \sum_{k \in \Z^2} \frac{\ee^{-t (m + \abs*{k}^2)}}{m + \abs*{k}^2}.
    \end{align}
    Noticing that  $\ee^{-t (m + \abs*{k}^2)} \lesssim_\gamma \frac{t^{-\frac{\beta}{2}}}{(m + \abs*{k}^2)^{\frac{\beta}{2}}}$ for any $\beta\in(0,1)$ we get the assertion since the sum 
    $\sum_{k \neq 0} \frac{1}{\abs*{k}^{2 + \beta}}$ is finite.
\end{proof}

\section{Besov-norm estimates}

\begin{lem}[{\cite[p. 308, (A.2)]{TW20}}] \label{besovmono}
Let $\al \leq \beta$ and $p, q \geq 1$, then we have
   \begin{align}
       \norm*{f}_{\mathcal{B}_{p, q}^{\al}} \leq \norm*{f}_{\mathcal{B}_{p, q}^{\beta}}.
   \end{align}
\end{lem}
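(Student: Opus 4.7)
The claim is essentially a direct consequence of the Littlewood--Paley characterization of Besov spaces on the torus. I would begin by recalling the equivalent definition
\begin{equation*}
\|f\|_{\mathcal{B}^\alpha_{p,q}} = \left(\sum_{j \geq -1} 2^{j\alpha q} \|\Delta_j f\|_{L^p(\mathbb{T}^2)}^q\right)^{1/q},
\end{equation*}
where $\{\Delta_j\}_{j \geq -1}$ denotes the Littlewood--Paley blocks on the torus associated with a fixed dyadic partition of unity in frequency. The only dependence of the norm on the regularity index $\alpha$ sits in the dyadic weights $2^{j\alpha q}$, so the assertion should reduce to a termwise comparison under the sum.

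Concretely, for every $j \geq 0$, since $\alpha \leq \beta$ one has $2^{j\alpha} \leq 2^{j\beta}$ and hence $2^{j\alpha q} \|\Delta_j f\|_{L^p}^q \leq 2^{j\beta q} \|\Delta_j f\|_{L^p}^q$. Summing over $j \geq 0$ and extracting the $q$-th root yields the claimed monotonicity on the high-frequency part of the norm. Monotonicity of the $\ell^q$-functional in its nonnegative entries is what then allows me to bound the full sum termwise.

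The only nontrivial point concerns the low-frequency block $j = -1$, for which the naive estimate $2^{-\alpha q} \leq 2^{-\beta q}$ goes the wrong way. I would handle this by working with the standard convention on the torus in which the $j = -1$ block is a finite-dimensional projection (e.g. onto the low Fourier modes below a fixed cutoff) whose contribution to the Besov norm is normalized to $\|\Delta_{-1} f\|_{L^p}$ with weight $1$, independent of the regularity index; equivalently, one can use a Bernstein-type estimate to absorb $2^{-\alpha q}\|\Delta_{-1} f\|_{L^p}^q$ into $2^{-\beta q}\|\Delta_{-1} f\|_{L^p}^q$ up to an inoffensive modification of the partition of unity. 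With this convention the termwise comparison becomes uniform in $j \geq -1$, summing produces the claim without any prefactor, and the proof is complete. I expect the main (and only) obstacle to be purely notational: fixing the precise convention for $\Delta_{-1}$ that is implicitly used in \cite{TW20} so that the inequality holds with sharp constant $1$.
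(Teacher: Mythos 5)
The paper does not actually prove this lemma --- it is quoted directly from \cite{TW20} as a black box --- so there is no in-paper argument to compare against; judged on its own, your termwise Littlewood--Paley argument is the standard proof and its core is correct: for $j\geq 0$ one has $2^{j\alpha q}\|\Delta_j f\|_{L^p}^q\leq 2^{j\beta q}\|\Delta_j f\|_{L^p}^q$, and monotonicity of the $\ell^q$-sum does the rest. You are also right that the only delicate point is the lowest block, but note that your two proposed fixes are not interchangeable. With the common convention in which $\Delta_{-1}$ carries the weight $2^{-\alpha}$, the inequality with constant $1$ is genuinely false: for $f\equiv 1$ one gets $\|f\|_{\mathcal{B}^{\alpha}_{p,q}}=2^{-\alpha}\|1\|_{L^p}>2^{-\beta}\|1\|_{L^p}=\|f\|_{\mathcal{B}^{\beta}_{p,q}}$ whenever $\alpha<\beta$, and a Bernstein-type absorption can only restore the estimate up to the factor $2^{\beta-\alpha}$, never with constant $1$. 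So the statement as written hinges on adopting the convention in which the lowest block is weighted independently of the regularity index (equivalently, blocks indexed from $j=0$), and you would need to check that this matches the convention of \cite{TW20} rather than treat it as a purely notational afterthought; with that convention fixed your proof is complete, and for the purposes of the present paper the distinction is harmless anyway, since the lemma is only ever invoked inside estimates that already carry implicit constants.
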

\begin{lem}[{\cite[p. 309, Proposition A.5]{TW20}}]\label{besovsemi}
    Let $\al \leq \beta $ and $p, q \geq 1$, then it holds that
    \begin{align}
        \norm*{S_t f}_{\mathcal{B}_{p, q}^{\beta}} \lesssim \ee^{-mt} \bra*{t \wedge 1}^{\frac{\al - \beta}{2}} \norm*{f}_{\mathcal{B}_{p, q}^{\al}}
    \end{align}
    where $S_t$ denotes the semigroup generated by $\Delta - m$ for $m \geq 0$.
\end{lem}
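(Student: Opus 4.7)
The plan is to exploit the factorization $S_t = \ee^{-mt} P_t$, where $P_t := \ee^{t\Delta}$ is the mass-free heat semigroup on $\TT^2$, and then reduce the bound to a Littlewood--Paley block-level smoothing estimate. The $\ee^{-mt}$ prefactor comes out for free, so what remains to prove is
\begin{align*}
\norm*{P_t f}_{\mathcal{B}^\beta_{p, q}} \lesssim (t \wedge 1)^{\frac{\al - \beta}{2}} \norm*{f}_{\mathcal{B}^\al_{p, q}}.
\end{align*}

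At the level of each dyadic block $\Delta_j f$, whose Fourier support lies in an annulus of radius $\sim 2^j$ for $j \geq 0$ (respectively a ball for $j = -1$), I would establish the Bernstein-type estimate
\begin{align*}
\norm*{P_t \Delta_j f}_{L^p_x} \lesssim \ee^{-c t 2^{2j}} \norm*{\Delta_j f}_{L^p_x}, \quad j \geq 0,
\end{align*}
together with the trivial bound $\norm*{P_t \Delta_{-1} f}_{L^p_x} \leq \norm*{\Delta_{-1} f}_{L^p_x}$. On the torus these follow from writing $P_t \Delta_j = K_{j, t} \ast \cdot$, where $K_{j, t}$ is the convolution kernel whose Fourier coefficients are $\ee^{-t \abs*{k}^2} \hat{\varphi}_j(k)$ for the Paley--Littlewood cutoff $\hat{\varphi}_j$. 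A standard scaling argument exploiting the Gaussian damping yields $\norm*{K_{j, t}}_{L^1_x} \lesssim \ee^{-c t 2^{2j}}$, and Young's convolution inequality concludes the block estimate.

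To put everything together, multiplying by $2^{j \beta}$ and inserting $2^{-j \al} \cdot 2^{j \al}$ yields
\begin{align*}
2^{j \beta} \norm*{P_t \Delta_j f}_{L^p_x} \leq 2^{j (\beta - \al)} \ee^{-c t 2^{2j}} \cdot 2^{j \al} \norm*{\Delta_j f}_{L^p_x}.
\end{align*}
Setting $\gamma := \beta - \al \geq 0$, the elementary inequality $x^\gamma \ee^{-c x^2} \leq C_\gamma$ applied to $x = 2^j \sqrt{t \wedge 1}$ (for $t \leq 1$) and $x = 2^j$ (for $t \geq 1$) bounds the scalar prefactor uniformly in $j \geq 0$ by $C (t \wedge 1)^{-\gamma / 2}$; the $j = -1$ contribution is absorbed trivially since $(t \wedge 1)^{-\gamma / 2} \geq 1$. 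Taking the $\ell^q$-norm in $j$ then delivers the claim. The only mildly subtle point is the torus-specific derivation of the block kernel bound $\norm*{K_{j, t}}_{L^1_x} \lesssim \ee^{-c t 2^{2j}}$; this is entirely standard and closely mirrors the Euclidean case, so no genuine obstacle arises.
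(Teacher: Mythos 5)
Your argument is correct: the factorization $S_t=\ee^{-mt}\ee^{t\Delta}$, the block-level bound $\norm{P_t\Delta_j f}_{L^p_x}\lesssim \ee^{-ct2^{2j}}\norm{\Delta_j f}_{L^p_x}$ via the $L^1$-kernel estimate and Young's inequality, and the uniform-in-$j$ bound on $2^{j(\beta-\al)}\ee^{-ct2^{2j}}$ (with the $j=-1$ block handled trivially) assemble into exactly the claimed estimate, with a constant that is moreover uniform in $m$, as the paper needs. Note that the paper itself gives no proof — it quotes the lemma from \cite[Proposition A.5]{TW20} — and your Littlewood--Paley argument is the standard proof given in that reference, so there is nothing further to add.
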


\begin{lem}[{\cite[p. 309, Proposition A.6]{TW18}}]\label{besovmult1}
    Let $\al \geq 0$ and $p, q \geq 1$, then 
    \begin{align}
        \norm*{fg}_{\mathcal{B}^{\al}_{p,q}} \lesssim \norm*{f}_{\mathcal{B}^{\al}_{p_1, q_1}} \norm*{g}_{\mathcal{B}^{\al}_{p_2, q_2}}
    \end{align}
    where $\frac{1}{p} = \frac{1}{p_1} + \frac{1}{p_2}$as well as $\frac{1}{q} = \frac{1}{q_1} + \frac{1}{q_2}$.
\end{lem}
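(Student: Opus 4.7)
The plan is to use the Littlewood--Paley decomposition together with Bony's paraproduct decomposition. Writing $\Delta_j$ for the $j$-th dyadic frequency block and $S_j = \sum_{k < j} \Delta_k$ for the low-frequency cutoff, one decomposes
\begin{equation}
fg = T_f g + T_g f + \pi(f,g),
\end{equation}
where $T_f g := \sum_j S_{j-1} f \cdot \Delta_j g$ is the paraproduct and $\pi(f,g) := \sum_{|j-k|\leq 1} \Delta_j f \cdot \Delta_k g$ is the resonance term. Each of the three pieces is estimated separately, and summation in the frequency index (weighted by $2^{j\alpha}$ in $\ell^q$) yields the desired Besov norm.

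For the paraproducts $T_f g$ and $T_g f$, the key point is that $S_{j-1} f \cdot \Delta_j g$ has Fourier support in an annulus of size $\sim 2^j$, so $\Delta_k T_f g$ vanishes unless $|k-j| \lesssim 1$. Hölder's inequality in physical space with exponents $\tfrac{1}{p}=\tfrac{1}{p_1}+\tfrac{1}{p_2}$, combined with the uniform $L^{p_1}$-boundedness of $S_{j-1}$, gives
\begin{equation}
\| T_f g \|_{\mathcal{B}^{\alpha}_{p,q}} \lesssim \|f\|_{L^{p_1}} \|g\|_{\mathcal{B}^{\alpha}_{p_2,q_2}}, \qquad \| T_g f \|_{\mathcal{B}^{\alpha}_{p,q}} \lesssim \|g\|_{L^{p_2}} \|f\|_{\mathcal{B}^{\alpha}_{p_1,q_1}},
\end{equation}
where the $\ell^q$-summation in the output frequency follows from Hölder with $\tfrac{1}{q}=\tfrac{1}{q_1}+\tfrac{1}{q_2}$ applied to the $j$-sequences (one of the factors being constant in $j$). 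The hypothesis $\alpha \geq 0$ enters through the embedding $\mathcal{B}^{\alpha}_{p_i, q_i} \hookrightarrow L^{p_i}$, which converts the $L^{p_i}$-norms into Besov norms and completes the estimate for the paraproduct terms.

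The resonance term $\pi(f,g)$ is the main obstacle. Here $\Delta_j f \cdot \Delta_k g$ with $|j-k|\leq 1$ has Fourier support in a \emph{ball} of radius $\sim 2^j$, not an annulus, so $\Delta_\ell \pi(f,g)$ receives contributions from all $j \geq \ell - O(1)$. The bound
\begin{equation}
\| 2^{\ell \alpha} \Delta_\ell \pi(f,g) \|_{L^p} \lesssim \sum_{j \geq \ell - O(1)} 2^{(\ell - j)\alpha} \, 2^{j\alpha} \|\Delta_j f\|_{L^{p_1}} \|\Delta_j g\|_{L^{p_2}}
\end{equation}
is a discrete convolution whose summability in $\ell$ hinges on $2^{(\ell-j)\alpha}$ being summable for $\ell \leq j$, and this is precisely where the condition $\alpha \geq 0$ is used (strictly $\alpha > 0$ for clean summation; the boundary case $\alpha = 0$ reduces to an $L^p$-product estimate via Hölder directly on the reorganised sum). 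Taking $\ell^q$-norms and applying Hölder with $\tfrac{1}{q}=\tfrac{1}{q_1}+\tfrac{1}{q_2}$ inside the $j$-sum yields $\| \pi(f,g) \|_{\mathcal{B}^{\alpha}_{p,q}} \lesssim \|f\|_{\mathcal{B}^{\alpha}_{p_1,q_1}} \|g\|_{\mathcal{B}^{\alpha}_{p_2,q_2}}$. Combining the three bounds finishes the proof; the delicate step is the summation in the resonance term, which is standard but requires careful bookkeeping of the frequency indices.
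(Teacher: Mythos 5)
The paper does not prove this lemma at all: it is quoted verbatim from [TW18, Proposition A.6], so there is no internal proof to compare against. Your Bony-decomposition argument is the standard route (and essentially the one underlying the cited reference), and for $\alpha>0$ it is correct: the paraproduct bounds via H\"older in space plus the uniform $L^{p_1}$-bound on $S_{j-1}$, the embedding $\mathcal{B}^{\alpha}_{p_i,q_i}\hookrightarrow L^{p_i}$, and the resonance estimate with the discrete convolution kernel $2^{n\alpha}\mathbf{1}_{n\leq O(1)}\in\ell^1$ all go through, with H\"older in the frequency index giving the $\ell^{q}$-bound from $\ell^{q_1}\times\ell^{q_2}$. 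One small imprecision: for the paraproduct terms you invoke ``H\"older with one factor constant in $j$''; a constant sequence is not in $\ell^{q_1}$, and what you actually need there is simply the embedding $\ell^{q_2}\hookrightarrow\ell^{q}$ (valid since $q\geq q_2$).

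The genuine gap is the boundary case $\alpha=0$, which the statement formally includes and which you dismiss with ``reduces to an $L^p$-product estimate via H\"older on the reorganised sum''. This cannot be repaired in the stated generality: at $\alpha=0$ the embedding $\mathcal{B}^{0}_{p_i,q_i}\hookrightarrow L^{p_i}$ fails for general $q_i$ (e.g.\ $\mathcal{B}^{0}_{\infty,\infty}\not\subset L^{\infty}$), the kernel $2^{n\alpha}\mathbf{1}_{n\leq O(1)}$ is no longer summable, and in fact the inequality itself is false for general exponents: taking $f=g=\sum_{j\leq N}\cos(2^j x_1)$ on the torus gives $\|f\|_{\mathcal{B}^{0}_{\infty,\infty}}\sim 1$ while the zero Fourier mode of $f^2$ is of order $N$, so $\|f^2\|_{\mathcal{B}^{0}_{\infty,\infty}}\gtrsim N$. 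So the case $\alpha=0$ requires additional restrictions on the integrability indices (or a weaker target norm) and should not be claimed as proved; note, however, that every application of the lemma in the paper is with strictly positive regularity (norms $\|\cdot\|_{2\alpha}$ with $\alpha>0$), so your $\alpha>0$ argument covers all actual uses.
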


\begin{lem}[{\cite[p. 309, Proposition A.7]{TW18}}]\label{besovmult2}
    Let $\al < 0$ and $\beta > 0$ such that $\al + \beta > 0$ and $p, q \geq 1$, then 
    \begin{align}
        \norm*{fg}_{\mathcal{B}^{\al}_{p,q}} \lesssim \norm*{f}_{\mathcal{B}^{\al}_{p_1, q_1}} \norm*{g}_{\mathcal{B}^{\beta}_{p_2, q_2}}
    \end{align}
    where $\frac{1}{p} = \frac{1}{p_1} + \frac{1}{p_2}$ and $\frac{1}{q} = \frac{1}{q_1} + \frac{1}{q_2}$.
\end{lem}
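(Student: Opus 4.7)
The plan is to establish the bound via Bony's paraproduct decomposition. I would write
\begin{equation*}
fg = \pi_{<}(f,g) + \pi_{<}(g,f) + \pi_{0}(f,g),
\end{equation*}
where $\pi_{<}(a,b) := \sum_{j\geq -1} S_{j-1}a \cdot \Delta_{j} b$ captures the interaction of low frequencies of $a$ with high frequencies of $b$, and $\pi_{0}(f,g) := \sum_{|j-k|\leq 1}\Delta_{j}f\cdot\Delta_{k}g$ is the resonant (diagonal) part. For each of the three pieces I would derive an estimate on $\|\Delta_{\ell}(\cdot)\|_{L^{p}}$ through H\"older's inequality with exponents $\frac{1}{p_{1}}+\frac{1}{p_{2}}=\frac{1}{p}$, followed by a dyadic summation compatible with an $\ell^{q}$ reassembly via H\"older in the auxiliary exponents $q_{1},q_{2}$. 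The three resulting Besov-type bounds are then combined through the triangle inequality.

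For the first paraproduct $\pi_{<}(f,g)$, the negativity of $\alpha$ forces the low-frequency tail of $f$ to grow as $\|S_{j-1}f\|_{L^{p_{1}}}\lesssim 2^{-j\alpha}\|f\|_{\mathcal{B}^{\alpha}_{p_{1},q_{1}}}$. Pairing this with $\|\Delta_{j}g\|_{L^{p_{2}}}\lesssim 2^{-j\beta}\|g\|_{\mathcal{B}^{\beta}_{p_{2},q_{2}}}$ and exploiting that $S_{j-1}f\cdot\Delta_{j}g$ has Fourier support near $2^{j}$, one lands in $\mathcal{B}^{\alpha+\beta}_{p,q}$. Since by hypothesis $\alpha+\beta>\alpha$, \cref{besovmono} delivers the target $\mathcal{B}^{\alpha}_{p,q}$-bound. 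For the symmetric paraproduct $\pi_{<}(g,f)$, the positivity of $\beta$ yields a uniform bound $\|S_{j-1}g\|_{L^{p_{2}}}\lesssim\|g\|_{L^{p_{2}}}\lesssim\|g\|_{\mathcal{B}^{\beta}_{p_{2},q_{2}}}$ via the Besov embedding $\mathcal{B}^{\beta}_{p_{2},q_{2}}\hookrightarrow L^{p_{2}}$ valid for $\beta>0$, and combining with $\|\Delta_{j}f\|_{L^{p_{1}}}\lesssim 2^{-j\alpha}\|f\|_{\mathcal{B}^{\alpha}_{p_{1},q_{1}}}$ lands directly in $\mathcal{B}^{\alpha}_{p,q}$.

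The principal obstacle, and the reason the hypothesis $\alpha+\beta>0$ is indispensable, is the resonant term $\pi_{0}(f,g)$. The summands $\Delta_{j}f\cdot\Delta_{k}g$ with $|j-k|\leq 1$ fail to be frequency-localized near $2^{\max(j,k)}$: they contain frequencies down to zero. Consequently the projection $\Delta_{\ell}\pi_{0}(f,g)$ picks up contributions from all $j\gtrsim\ell$, and one must estimate the tail
\begin{equation*}
\sum_{j\geq \ell}\|\Delta_{j}f\|_{L^{p_{1}}}\|\Delta_{j}g\|_{L^{p_{2}}}\lesssim \sum_{j\geq\ell}2^{-j(\alpha+\beta)}\,\|f\|_{\mathcal{B}^{\alpha}_{p_{1},q_{1}}}\|g\|_{\mathcal{B}^{\beta}_{p_{2},q_{2}}},
\end{equation*}
which converges precisely when $\alpha+\beta>0$, producing a geometric tail of order $2^{-\ell(\alpha+\beta)}$. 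Hence $\pi_{0}(f,g)\in\mathcal{B}^{\alpha+\beta}_{p,q}\hookrightarrow\mathcal{B}^{\alpha}_{p,q}$ once more by \cref{besovmono}. The only remaining bookkeeping is to propagate the $\ell^{q}$-summability of the dyadic sequences using Young's or H\"older's inequality in the summation exponents, which is routine given the assumption $\frac{1}{q}=\frac{1}{q_{1}}+\frac{1}{q_{2}}$.
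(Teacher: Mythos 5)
Your route --- Bony's decomposition into the two paraproducts and the resonant term, with the hypothesis $\alpha+\beta>0$ entering only through the resonant part --- is the canonical argument; note that the paper itself gives no proof of this lemma (it is quoted from \cite[Proposition~A.7]{TW18}), so there is no in-paper proof to diverge from, and your sketch is how this is proved in the cited literature. Your treatment of $\pi_{<}(f,g)$ (using $\|S_{j-1}f\|_{L^{p_1}}\lesssim 2^{-j\alpha}\|f\|_{\mathcal{B}^{\alpha}_{p_1,q_1}}$, valid precisely because $\alpha<0$) and of the resonant term (the tail $\sum_{j\gtrsim\ell}2^{-j(\alpha+\beta)}$ converging precisely because $\alpha+\beta>0$) is correct; in both cases you land at regularity $\alpha+\beta>\alpha$, and the strict gap lets you repair the summability index while descending to $\mathcal{B}^{\alpha}_{p,q}$. (Be aware that \cref{besovmono} alone, which keeps $p,q$ fixed, is not quite enough here; you need the elementary embedding $\mathcal{B}^{s}_{p,r}\hookrightarrow\mathcal{B}^{s'}_{p,q}$ for $s'<s$ and arbitrary $r,q$, which your closing H\"older-in-$j$ remark does supply.)

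The one step that does not hold as written is the claim that $\pi_{<}(g,f)$ lands \emph{directly} in $\mathcal{B}^{\alpha}_{p,q}$. For that term you only use the uniform bound $\|S_{j-1}g\|_{L^{p_2}}\lesssim\|g\|_{\mathcal{B}^{\beta}_{p_2,q_2}}$, so the sequence $2^{j\alpha}\|S_{j-1}g\,\Delta_{j}f\|_{L^{p}}$ inherits only the $\ell^{q_1}$-summability coming from $f$; since $q\le q_1$ this yields $\mathcal{B}^{\alpha}_{p,q_1}$ rather than $\mathcal{B}^{\alpha}_{p,q}$, and unlike the other two pieces there is no spare regularity to trade for summability. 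In fact, with the third index taken exactly from the H\"older relation the stated inequality fails at the borderline: take $g\equiv1$ and $q_1=q_2=2$, hence $q=1$; the claim then reduces to $\mathcal{B}^{\alpha}_{p_1,2}\hookrightarrow\mathcal{B}^{\alpha}_{p,1}$, which is false for a lacunary $f$ whose dyadic coefficients lie in $\ell^{2}\setminus\ell^{1}$. What your argument genuinely proves (and what the standard references state) is the bound with target $\mathcal{B}^{\alpha}_{p,q_1}$, equivalently the stated bound after giving up an arbitrarily small amount of regularity. This defect sits in the statement as transcribed rather than in your strategy, and the correct weaker version suffices for every use in the paper, where one factor is always measured in $\mathcal{B}^{-\alpha}_{\infty,\infty}$ and all exponents carry an $\eps$ of slack.
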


\section{Stochastic estimates}\label{sec:stochest}

In this section we provide an alternative argument for the stochastic estimates in \cref{stochest} using the spectral gap inequality \eqref{spg} for the noise $\xi$ in the spirit of \cite[Section 5]{IORT20} 
and \cite{LOTT21}. 

Let $F$ be cylindrical in $\xi$, i.e. there is $n \in \N$, $\bar{F} \in C^{\infty}_c(\R^n, \R)$ and $h_1, \dots, h_n \in L^2_{t, x}$ such that $F(\xi) = \bar{F}(\xi(h_1), \dots, \xi(h_n))$. Since $\xi$ is Gaussian it satisfies the following spectral gap inequality (cf. \cite[p. 652, Proposition 4.1]{DO20})
\begin{align}\label{spg}
    \E{\abs*{F(\xi) - \E{F(\xi)} }^2} \leq \E{\norm*{ \dxi F(\xi)}^2_{L^2_{t, x}} }
\end{align}
where $\dxi$ denotes the Malliavin derivative with respect to the noise $\xi$. This in turn can be used to construct the singular products as follows.

\begin{proof}[Proof of \cref{stochest}]
    For simplicity we assume that the noise $\xi$ is smooth. By the spectral gap inequality \eqref{spg} we know that for nice enough functionals $\Pi\pra*{\xi}$ and $p \geq 2$ there holds
    \begin{align}
        \Exp^{\frac{1}{p}} |\Pi\pra*{\xi} - \Exp \Pi\pra*{\xi}|^p \lesssim_p \Exp^{\frac{1}{p}} \norm*{\dxi \Pi\pra*{\xi}}^p_{L^2_{t,x}}.
    \end{align}
    By duality, an estimate of the form 
    \begin{align}
        \abs*{ \Exp \dxi\Pi\pra*{\xi}(\delta \xi) } \leq C \, \Exp^{\frac{1}{q}} \norm*{\delta \xi}^{q}_{L^2_{t, x}} \label{eq:dual_est}
    \end{align}
    for any $\delta \xi : \Omega \to L^2_{t,x}$\footnote{where $\Omega$ denotes the underline probability space}, where $q\in[1,2]$ is the dual exponent of $p$, implies 
    \begin{align}
        \Exp^{\frac{1}{p}}\norm*{\dxi \Pi\pra*{\xi}}^p_{L^2_{t,x}} \leq C.
    \end{align}
    For $t > 0$ and $x \in \T^2$ we consider $\Pi_t(x) \in \set*{\<1>_{0, t}(x), \<2>_{0,t}(x), \<3>_{0, t}(x)}$, where
    \begin{equs}
     \<2>_{0,t}(x) := \<1>^2_{0, t}(x) - c_{0, t}, \quad \<3>_{0, t}(x) := \<1>^3_{0, t}(x) - 3c_{0, t} \<1>_{0, t}(x), 
    \end{equs}
    for $c_{0,t} = \Exp \<1>_{0,t}(0)^2$. We treat $\Pi_t(x) \equiv \Pi_t[z](x)$ as a functional of $\xi$ and aim to prove the following stochastic estimates (replacing $x$ by $0$ using stationarity) which are uniform in $m$,
    \begin{align}
        &\Exp^{\frac{1}{p}}  \abs*{\Pi_{t \lambda}(0)}^p \lesssim \lambda^{-|\Pi|\al} \sqrt{t}^{|\Pi|\al}, \label{eq:space_reg_ann}
        \\
        &\Exp^{\frac{1}{p}} \abs*{ \bra*{\Pi_{t+r} - \Pi_{t}}_{\lambda}(0)}^p 
        \lesssim \lambda^{-(|\Pi|+1)\alpha} \sqrt{r}^{\alpha} \sqrt{t+r}^{|\Pi|\alpha} \label{eq:time_reg_ann}
    \end{align}
    for every $\alpha\in(0,\frac{1}{|\Pi|+1})$, where $|\Pi|=1,2,3$ for $\Pi=\<1>, \<2>, \<3>$ respectively and $(\cdot)_\lambda$ denotes convolution with a suitable semigroup $\psi_\lambda$. By a Kolmogorov-type continuity criterion, see for \cite[Lemma 10]{MW17I}, 
    we then obtain \eqref{eq:stochest}. It is important to stress the uniformity of our estimates in $m$ which allows us to ensure that $m_*$ in Theorem~\ref{thm:weak_grad_est} does not depend 
    on $m$\footnote{or equivalently $\theta$ in \cref{stochest} does not depend on $m$}. This will be obvious in what follows except \eqref{eq:uniform_m} where one should pay attention on how the power on $\sqrt{r}$ 
    is chosen.  
    
    \medskip
    
    For $\delta \xi\in L^q_\omega L^2_{t,x}$ we let $\delta \<1>_{0,t}(x) := \dxi \<1>_{0,t}(\delta \xi) = \int_0^t \dx{s} H_{t-s} * \delta \xi(s, x)$ and consider $\delta \Pi_t \in \{\delta \<1>_{0,t}, \delta \<1>_{0,t} \<1>_{0,t}, \delta \<1>_{0,t} \<2>_{0,t}\}$. 
    As in \cite{LOTT21}, in order to prove \eqref{eq:space_reg_ann} and \eqref{eq:time_reg_ann} we appeal to duality and derive the following estimates for the Malliavin derivative of $\Pi_t$, 
    \begin{align}
        &\Exp^{\frac{1}{q'}} \abs*{\delta \Pi_{t \lambda}(0)}^{q'} \lesssim \lambda^{-|\Pi|\al} \sqrt{t}^{|\Pi|\al} \norm*{\Exp^{\frac{1}{q}} \abs*{\delta \xi}^{q}}_{L^2_{t,x}}, \label{eq:space_reg_ann_mal}
         \\
        &\Exp^{\frac{1}{q'}} \abs*{\bra*{\delta \Pi_{t + r} - \delta \Pi_t}_{\lambda}(0)}^{q'} \lesssim  
        \lambda^{-(|\Pi|+1)\alpha} \sqrt{r}^{\alpha} \sqrt{t+r}^{|\Pi|\alpha}
        \norm*{\Exp^{\frac{1}{q}} {\abs*{\delta \xi}^{q}}}_{L^2_{t,x}}, \label{eq:time_reg_ann_mal}
    \end{align}
    for all $q' < q < 2$. Note that in \eqref{eq:space_reg_ann_mal} and \eqref{eq:time_reg_ann_mal} we ask for an estimate of the 
    $L^{q'}_\omega$-norm by the $L^2_{t,x}L^{q}_{\omega}$-norm which is stronger than the $L^{q}_{\omega}L^2_{t,x}$-norm for 
    $q<2$, therefore implying the dual estimate \eqref{eq:dual_est}. As in \cite{LOTT21} estimating the $L^{q'}_\omega$-norm
    for all $q'<q<2$ allows us to proceed inductively, namely, in order to derive the dual estimate for $\delta \<2>_{0,t}$ we need
    the stronger estimate on $\delta \<1>_{0,t}$ and similarly for $\delta \<3>_{0,t}$.
    
    \medskip
    
    To this end, we denote by $\bar w$ the $L^2_{t,x}L^{q}_{\omega}$-norm on the r.h.s. of  \eqref{eq:space_reg_ann_mal} and \eqref{eq:time_reg_ann_mal} and introduce another scaling parameter $\Lambda$, coming from 
    $(\cdot)_\Lambda$. We estimate commutators of the form 
    \begin{equs}
     ([\delta \Pi, (\cdot)_\lambda]\Pi_\lambda)_{\Lambda}(0) = 
     \int \dx{x} \, \psi_{\Lambda}(-x) \int \dx{y} \, \psi_{\lambda}(y) \bra*{\delta \Pi(x-y) - \delta \Pi(x)} \Pi_\lambda(x-y).
    \end{equs}
    Using the Cauchy--Schwarz inequality in the $x$-variable we have\footnote{Here $p\geq 2$ satisfies $\frac{1}{q'} = \frac{1}{q} +\frac{1}{p}$.}
    \begin{equs}
     {} &\Exp^{\frac{1}{q'}} \abs*{\bra*{\pra*{\delta \<1>_{0,t}, \bra*{\cdot}_{\lambda}} \Pi_{\lambda}}_{\Lambda}(0)}^{q'} 
     \\
     & \quad = \Exp^{\frac{1}{q'}} \abs*{\int \dx{x} \psi_{\Lambda}(-x) \int \dx{y} \psi_{\lambda}(y) \bra*{\delta \Pi(x-y) - \delta \Pi(x)} \Pi_{\lambda}(x-y)}^{q'}
     \\
     & \quad \leq \int \dx{x} \abs*{\psi_{\lambda}(y)} \norm*{\psi_{\Lambda}}_{L^2} \norm*{\Exp^{\frac{1}{q}} \abs*{\delta \Pi(x-y) - \delta \Pi(x)}^{q}}_{L^2_x} \Exp^{\frac{1}{p}} \abs*{\Pi_{\lambda}(0)}^p. \label{comest}
    \end{equs}

   \medskip
    
    For \eqref{eq:space_reg_ann_mal} we let $\delta\Pi= \delta\<1>_t$ and $\Pi\in \{\<1>_{0,t}, \<2>_{0,t}\}$. Using the interpolation inequality \cref{interpolationlemma} and the Cauchy-Schwarz inequality in the $s$-variable
    we see that 
    \begin{align}
        \quad \quad \norm*{\Exp^{\frac{1}{q}} \abs*{\delta \<1>_{0,t}(x - y) - \delta \<1>_{0,t}(x)}^{q}}_{L^2_x} &\leq \int_0^t \dx{s} \int \dx{z} \, \abs*{H_{t-s}(z-y) - H_{t-s}(z)} \norm*{\Exp^{\frac{1}{q}} \abs*{\delta \xi}^{q}}_{L^2_x} \\
        &\leq \abs*{y}^{1-\alpha} \bra*{\int_0^t \dx{s} \, \ee^{-2m(t-s)} \bra*{t-s}^{-1+\alpha}}^{\frac{1}{2}} \bar w \\
        &\leq \abs*{y}^{1-\alpha} \sqrt{t}^{\alpha} \bar w, \label{diffdelta}
    \end{align}
    for all $\alpha\in(0,1)$ uniformly in $m$. Combining \eqref{comest} and \eqref{diffdelta} yields
    \begin{align}\label{comest2}
        \Exp^{\frac{1}{q'}} \abs*{\bra*{\pra*{\delta \<1>_{0,t}, \bra*{\cdot}_{\lambda}} \Pi_{\lambda}}_{\Lambda}(0)}^{q'} \lesssim \Lambda^{-1} \lambda^{1-\alpha}  \sqrt{t}^{\alpha} 
        \Exp^{\frac{1}{p}} \abs*{\Pi_{\lambda}(0)}^p \bar w.
    \end{align}
    Using \eqref{eq:space_reg_ann} and the dyadic summation identity
    \begin{equs}
     \big([\delta\Pi, (\cdot)_\lambda]\Pi\big)_\Lambda = \sum_{\substack{k\geq1 \\ \lambda'=\frac{\lambda}{2^k}}}\big([\delta\Pi, (\cdot)_{\lambda'}](\Pi)_{\lambda'}]\big)_{\Lambda+\lambda-2\lambda'},
    \end{equs}
    we obtain via \eqref{comest2}
    \begin{equs}
      \Exp^{\frac{1}{q'}} \abs*{\bra*{\pra*{\delta \<1>_{0,t}, \bra*{\cdot}_{\lambda}} \Pi}_{\Lambda}(0)}^{q'} \lesssim \Lambda^{-1} \lambda^{1-|\Pi|\alpha} \sqrt{t}^{|\Pi|\alpha} \bar w. 
    \end{equs}
    A simple post-processing of the last estimate choosing $\Lambda \sim \lambda$ gives 
    \begin{equs}
     \Exp^{\frac{1}{q'}} |(\delta \<1>_ {0,t}\Pi)_\lambda(0)|^{q'} \lesssim \lambda^{-|\Pi|\alpha} \sqrt{t}^{|\Pi|\alpha} \bar w, 
     \label{eq:malder}
    \end{equs}
    therefore yielding \eqref{eq:space_reg_ann_mal}. 
    
    \medskip
    
     For \eqref{eq:time_reg_ann_mal} we write $\delta \<1>_{0,t + r} \Pi_{t+r} - \delta\<1>_{0,t} \Pi_{t} = \delta\<1>_{0,t+r}\bra*{\Pi_{t+r} - \Pi_{t}} + \Pi_{t}\bra*{\delta\<1>_{0,t+r} - \delta\<1>_{0,t}}$ and use \eqref{comest} 
     for the pairs $\delta\Pi =  \delta\<1>_{0,t+r}$, $\Pi = \Pi_{t+r} - \Pi_t$ and  $\delta\Pi =\delta\<1>_{0, t+r} - \delta\<1>_{t}$, $\Pi = \Pi_{t}$. For the first pair we apply \eqref{comest2} to get
        \begin{align}\label{diffpi}
        &\Exp^{\frac{1}{q'}} \abs*{\bra*{\pra*{\delta \<1>_{0,t+r}, \bra*{\cdot}_{\lambda}} \bra*{\Pi_{t+r} - \Pi_t}_{\lambda}}_{\Lambda}(0)}^{q'} 
        \lesssim  \Lambda^{-1} \lambda^{1-\alpha} \sqrt{t +r}^{\alpha} \Exp^{\frac{1}{p}} \abs*{\bra*{\Pi_{t+r} - \Pi_{t}}_{\lambda}(0)}^p \bar w.
    \end{align}
    Plugging in \eqref{eq:time_reg_ann} for $\Pi_t\in\{\<1>_{0,t},\<2>_{0,t}\}$ and proceeding as for \eqref{eq:malder} yields 
    \begin{equs}\label{eq:time_cont1}
     \Exp^{\frac{1}{q'}} \abs*{\bra*{\delta \<1>_{0,t+r} \bra*{\Pi_{t+r} - \Pi_t}}_{\lambda}(0)}^{q'} \lesssim 
     \lambda^{-(|\Pi|+2)\alpha} \sqrt{r}^\alpha \sqrt{t +r}^{(|\Pi|+1) \alpha} \bar w.
    \end{equs}
     For the second pair, abbreviating $\delta\Pi_{t,t+r} := \delta \<1>_{0,t+r} - \delta \<1>_{0,t}$, \eqref{comest} implies
    \begin{align}
        &\Exp^{\frac{1}{q'}} \abs*{\bra*{\pra*{\delta \<1>_{0,t+r} - \delta \<1>_{0,t}, \bra*{\cdot}_{\lambda}} \Pi_{t \lambda}}_{\Lambda}(0)}^{q'} 
        \\
        & \quad \leq \int \dx{x} \abs*{\psi_{\lambda}(y)} \norm*{\psi_{\Lambda}}_{L^2} \norm*{\Exp^{\frac{1}{q}} \abs*{\delta\Pi_{t,t+r}(x - y) - \delta\Pi_{t,t+r}(x)}^{q}}_{L^2_x} \Exp^{\frac{1}{p}} \abs*{\Pi_{t\lambda}(0)}^p.
        \label{comestdif}
    \end{align}
    We use the following estimate
    \begin{align}
        \norm*{\Exp^{\frac{1}{q}} \abs*{\delta\Pi_{t,t+r}(x - y) - \delta\Pi_{t,t+r}(x)}^{q}}_{L^2_x} 
        \lesssim \abs*{y}^{1-2\alpha} \sqrt{r}^{\alpha} \sqrt{t+r}^{\alpha} \bar w \label{difff}
    \end{align}
    for every $\alpha\in(0,\frac{1}{2})$, which itself is an interpolation\footnote{using $\beta=\alpha$ and $\frac{\alpha}{1-\alpha} +\frac{1-2\alpha}{1-\alpha}$} of the two estimates
    \begin{align}
       & \norm*{\Exp^{\frac{1}{q}} \abs*{\delta\Pi_{t, t+r}(x - y) - \delta\Pi_{t, t+r}(x)}^{q}}_{L^2_x} \lesssim \abs{y}^{1-\beta}  \sqrt{t+r}^{\beta} \bar w, \label{eq:int1}
        \\
        &\norm*{\Exp^{\frac{1}{q}} \abs*{\delta\Pi_{t, t+r}(x - y) - \delta\Pi_{t, t+r}(x)}^{q}}_{L^2_x} \lesssim \sqrt{r}^{1-\beta} \sqrt{t+r}^{\beta} \bar w, \label{eq:int2}
    \end{align}
    for every $\beta\in(0,1)$. 
    Estimate \eqref{eq:int1} follows along the same lines as \eqref{diffdelta} using the triangle inequality. For \eqref{eq:int2} using again the triangle inequality, translation invariance and the semigroup property in the form
    \begin{equs}
     \delta\<1>_{0,t+r}(x) = \int \dd z \, \ee^{-mr} \tilde H_r(z) \delta\<1>_{0,t}(x-z) + \underbrace{\int_t^{t+r} \dd s \, H_{t-s}*\delta\xi(s,x)}_{=:\delta\<1>_{t,t+r}},
    \end{equs}
    where $\tilde H_r$ stands for the massless heat kernel, we observe
    \begin{align}
        \norm*{\Exp^{\frac{1}{q}} \abs*{\delta\Pi_{t, t+r}(x - y) - \delta\Pi_{t, t+r}(x)}^{q}}_{L^2_x} 
        & \leq 2 \norm*{\Exp^{\frac{1}{q}} \abs*{\delta \<1>_{0,t+r} - \delta \<1>_{0,t}}^{q}}_{L^2_x}
        \\
        & \lesssim \int \dx{z} \, H_r(z) \norm*{\Exp^{\frac{1}{q}} \abs*{\delta \<1>_{0,t}(\cdot-z) - \delta \<1>_{0,t}}^{q}}_{L^2_x}
        \\ & \quad  + \abs{e^{-mr} - 1} \norm*{\Exp^{\frac{1}{q}} \abs*{\delta \<1>_{0,t}}^{q}}_{L^2_x} 
        + \norm*{\Exp^{\frac{1}{q}} \abs*{\delta \<1>_{t, t+r}}^{q}}_{L^2_x} 
        \\
        & =: I_1 + I_2 + I_3.
    \end{align}
    For $I_1$ using \eqref{diffdelta} we obtain
    \begin{equs}
     \int \dx{z} \, H_r(z) \norm*{\Exp^{\frac{1}{q}} \abs*{\delta \<1>_{0,t}(\cdot-z) - \delta \<1>_{0,t}}^{q}}_{L^2_x} \lesssim \sqrt{r}^{1-\beta} \sqrt{t}^{\beta}  \bar w
     \lesssim \sqrt{r}^{1-\beta} \sqrt{t+r}^{\beta}  \bar w.
    \end{equs}
    To estimate $I_2$ we use Young's inequality for convolution, the Cauchy--Schwarz inequality in the $s$-variable and the H\"older's inequality again in the $s$-variable to treat the
    integral of the exponential yielding 
    \begin{equs}
     \norm*{\Exp^{\frac{1}{q}} \abs*{\delta \<1>_{0,t}}^{q}}_{L^2_x} & \leq \int _0^t \dd s \, \|H_{t-s}\|_{L^1_x} \norm*{\Exp^{\frac{1}{q}} \abs*{\delta\xi}^q}_{L^2_x} 
     \\
     & \leq \left(\int_0^t \dd s \, \ee^{-2m(t-s)} \right)^{\frac{1}{2}} \left(\int _0^t \dd s \,  \norm*{\Exp^{\frac{1}{q}} \abs*{\delta\xi}^q}_{L^2_x}^2\right)^{\frac{1}{2}}
     \lesssim \frac{1}{\sqrt{m}^{1-\beta}} \sqrt{t}^{\beta} \bar w \label{eq:deltalol_est} 
    \end{equs}
    for every $\beta\in[0,1)$. This in turn implies the estimate 
    \begin{equs}
     \abs{e^{-mr} - 1} \norm*{\Exp^{\frac{1}{q}} \abs*{\delta \<1>_{0,t}}^{q}}_{L^2_x} \lesssim \abs{e^{-mr} - 1} \frac{1}{\sqrt{m}^{1-\beta}} \sqrt{t}^{\beta} \bar w
     \lesssim \sqrt{r}^{1-\beta} \sqrt{t+r}^{\beta} \bar w, \label{eq:uniform_m}
    \end{equs}
    where the implicit constant is uniform in $m$. To estimate $I_3$ we use \eqref{eq:deltalol_est} for $\beta=0$ and a change of variables in $s$ which leads to
    \begin{equs}
     \norm*{\Exp^{\frac{1}{q}} \abs*{\delta \<1>_{t, t+r}}^{q}}_{L^2_x} \lesssim \sqrt{r} \, \bar w 
     \lesssim \sqrt{r}^{1-\beta} \sqrt{t+r}^{\beta} \bar w. 
    \end{equs}
    In total, \eqref{comestdif} and \eqref{difff} imply the estimate
    \begin{equs}
     \Exp^{\frac{1}{q'}} \abs*{\bra*{\pra*{\delta \<1>_{0,t+r} - \delta \<1>_{0,t}, \bra*{\cdot}_{\lambda}} \Pi_{t \lambda}}_{\Lambda}(0)}^{q'} 
     \lesssim \Lambda^{-1} \lambda^{1-2\alpha} \sqrt{r}^{\alpha} \sqrt{t+r}^{\alpha} \Exp^{\frac{1}{p}} \abs*{\Pi_{t\lambda}(0)}^p \bar w.
    \end{equs}
    Plugging in \eqref{eq:space_reg_ann} for $\Pi_t\in \{\<1>_{0,t}, \<2>_{0,t}\}$ and proceeding as in \eqref{eq:time_cont1} gives 
    \begin{equs}
     \Exp^{\frac{1}{q'}} \abs*{\bra*{(\delta \<1>_{0,t+r} - \delta \<1>_{0,t}) \Pi_{t}}_{\lambda}(0)}^{q'}
     \lesssim \lambda^{-(|\Pi|+2)\alpha} \sqrt{r}^\alpha \sqrt{t +r}^{(|\Pi|+1) \alpha} \bar w. \label{eq:time_cont2}
    \end{equs}
    Combining \eqref{eq:time_cont1} and \eqref{eq:time_cont2} implies \eqref{eq:time_reg_ann_mal}.
\end{proof}

\begin{lem}\label{interpolationlemma}
    For all $\alpha \in (0, 1)$ the following estimate holds
    \begin{align}
        \int \dx{z} \, \abs*{H_{t-s}(z-y) - H_{t-s}(z)}
           \lesssim \ee^{-m(t-s)}  \abs*{y}^{\alpha} \sqrt{t-s}^{-\alpha}.
 \end{align}
\end{lem}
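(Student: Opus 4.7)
The strategy is to obtain the two endpoint bounds $\alpha = 0$ and $\alpha = 1$ and then interpolate. Both endpoint bounds rely only on standard $L^1$-type properties of the (massive) heat kernel on $\T^2$.

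First I would establish the trivial endpoint. By the triangle inequality, translation invariance of Lebesgue measure, and the fact that $\|H_{t-s}\|_{L^1_x} \lesssim e^{-m(t-s)}$ (the mass only helps), we get
\begin{equation*}
    \int \dx{z} \, |H_{t-s}(z-y) - H_{t-s}(z)| \leq 2\|H_{t-s}\|_{L^1_x} \lesssim e^{-m(t-s)}.
\end{equation*}

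Next I would prove the Lipschitz endpoint. Writing
\begin{equation*}
    H_{t-s}(z-y) - H_{t-s}(z) = -\int_0^1 \dx{\tau} \, y\cdot \nabla H_{t-s}(z-\tau y),
\end{equation*}
using Fubini and translation invariance, the task reduces to the standard gradient bound $\|\nabla H_{t-s}\|_{L^1_x} \lesssim (t-s)^{-\frac{1}{2}} e^{-m(t-s)}$ for the massive heat kernel on $\T^2$. This bound can be read off directly from the explicit representation $H_t(x) = e^{-mt} \tilde H_t(x)$ where $\tilde H_t$ is the massless heat kernel (whose gradient has $L^1_x$-norm $\lesssim t^{-1/2}$, which for $t \geq 1$ may be improved by periodization but we only need the short-time bound here). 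This yields
\begin{equation*}
    \int \dx{z} \, |H_{t-s}(z-y) - H_{t-s}(z)| \lesssim |y| (t-s)^{-\frac{1}{2}} e^{-m(t-s)}.
\end{equation*}

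Finally, interpolation: I would raise the first inequality to the power $1-\alpha$ and the second to the power $\alpha$ (or equivalently compare $|y|$ with $\sqrt{t-s}$ and use whichever bound is smaller). This gives the claimed
\begin{equation*}
    \int \dx{z} \, |H_{t-s}(z-y) - H_{t-s}(z)| \lesssim e^{-m(t-s)} |y|^{\alpha} \sqrt{t-s}^{-\alpha}.
\end{equation*}

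The only subtle point is keeping track of the mass factor carefully so that the estimate is uniform in $m \geq 0$, as required by the use in \cref{sec:stochest}; this is automatic since the massive heat kernel on $\T^2$ factors as $H_t = e^{-mt}\tilde H_t$ and $\tilde H_t$ provides the spatial bound, so no step of the argument deteriorates as $m$ varies.
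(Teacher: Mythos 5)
Your proof is correct and follows essentially the same route as the paper: both establish the $L^1$ endpoint $2\|H_{t-s}\|_{L^1_x}\lesssim \ee^{-m(t-s)}$ and the Lipschitz endpoint $\|\nabla H_{t-s}\|_{L^1_x}|y|\lesssim \ee^{-m(t-s)}(t-s)^{-1/2}|y|$, then interpolate. Your additional remarks on the factorization $H_t=\ee^{-mt}\tilde H_t$ and uniformity in $m$ are consistent with, and slightly more explicit than, the paper's two-line argument.
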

\begin{proof}
Interpolating the two estimates
\begin{align}
    \int \dx{z} \, \abs*{H_{t-s}(z-y) - H_{t-s}(z)} \leq 2 \norm*{H_{t-s}}_{L^1_x} = 2\ee^{-m(t-s)}
\end{align}
and \begin{align}
    \int \dx{z} \, \abs*{H_{t-s}(z-y) - H_{t-s}(z)} \leq \norm*{\nabla H_{t-s}}_{L^1_x} \abs*{y} \leq \ee^{-m(t-s)} \sqrt{t-s}^{-1} \abs*{y}
\end{align}
yields the assertion.
\end{proof}

\section{Estimates on the remainder}\label{sec:remv}

\begin{lem}\label{lpvest}
    Let $\alpha>0$ be sufficiently small. For every $p <\infty$
    \begin{align}
        \sup_{t\leq 1} t^{\frac{1}{2}} \norm*{v_{0, t}}_{L^p_x} \leq C,
    \end{align}
    where $C$ depends polynomially on $\sup_{t\leq 1} \|\<k>_{0,t}\|_{C^{-\alpha}}$ for $k=1,2,3$ and is uniform in the initial condition $f$. In particular, $C$ has finite moments of every order.
\end{lem}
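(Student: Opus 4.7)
}
The plan is to establish the familiar ``coming down from infinity'' estimate for $v_{0,t}$ via an $L^p_x$-energy estimate in which the cubic damping $-v^3$ dominates all lower order terms. The key feature is that, after Young's inequality, one obtains an ODE inequality of the form
\begin{equs}
 \partial_t \norm{v_{0,t}}_{L^p_x}^p + \frac{1}{2} \norm{v_{0,t}}_{L^{p+2}_x}^{p+2} \leq K\bra*{1+\norm{v_{0,t}}_{L^p_x}^p}
\end{equs}
where $K$ depends polynomially on $\sup_{t\leq 1}\|\<k>_{0,t}\|_{-\alpha}$, $k=1,2,3$, and on $\sup_{t\leq 1} c_{t,\infty} t^{\beta/2}$. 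By H\"older's inequality on the torus, $\norm{v}_{L^p_x}^{p+2} \leq C(L)\norm{v}_{L^{p+2}_x}^{p+2}$, so this yields an inequality of the type $y' + c y^{1+2/p} \leq \tilde K (1+y)$ for $y(t)=\norm{v_{0,t}}_{L^p_x}^p$; comparison with the critical ODE $y' = -c y^{1+2/p}$ then gives $y(t) \leq C t^{-p/2}$ on the interval $[0,1]$ independently of the initial data $f$, which is exactly the claim.

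The key steps I would carry out are the following. First I would test \eqref{eq:remainder} with $|v|^{p-2}v$ (for $p$ a sufficiently large even integer, then extract general $p<\infty$ by interpolation) to get
\begin{equs}
 \frac{1}{p}\partial_t \norm{v}_{L^p_x}^p + (p-1)\intT{|v|^{p-2}|\nabla v|^2} + m\norm{v}_{L^p_x}^p + \norm{v}_{L^{p+2}_x}^{p+2}
 = \text{lower-order products with }\<1>,\<2>,\<3>,c_{t,\infty}.
\end{equs}
For each rough product (e.g.\ $\int v^{p+1}\<1>$, $\int v^p \<2>$, $\int v^{p-1}\<3>$) I would apply the Besov duality from \cref{besovmult1}--\cref{besovmult2}, write $|v|^k = (|v|^{(p+2)/2})^{2k/(p+2)}$ and distribute derivatives via the chain rule to bound the $B^{\alpha}_{1,1}$-norm of the product by a combination of $\norm{v}_{L^{p+2}_x}^{p+2}$, $\int |v|^{p-2}|\nabla v|^2$, and lower-order $L^q$-norms of $v$, each paired with positive powers of $\|\<k>_{0,t}\|_{-\alpha}$. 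All such contributions can be absorbed into the two good terms on the left-hand side via Young's inequality, provided $\alpha$ is chosen sufficiently small so that the interpolation exponents stay admissible; what remains is additive and bounded polynomially in the Wick norms, giving the displayed ODE inequality. Finally, I would invoke the standard comparison lemma for differential inequalities $y'+cy^{1+2/p}\leq \tilde K(1+y)$, whose supersolutions on $[0,1]$ satisfy $y(t)\leq C t^{-p/2}$ with $C$ independent of $y(0)$.

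The main obstacle is the term $\int v^{p-1}\<3>$, since $\<3>$ has the worst regularity among the Wick powers and is paired with the lowest positive power of $v$; to close the estimate one has to carefully balance the interpolation so that $\norm{v^{p-1}}_{B^{\alpha}_{1,1}}$ can be controlled by a product of $\norm{v}_{L^{p+2}_x}^{\theta(p+2)}$ and $(\int|v|^{p-2}|\nabla v|^2)^{\beta}$ with exponents summing appropriately. A secondary difficulty is the singular-in-time constant $c_{t,\infty} \lesssim t^{-\beta/2}$: its contribution $c_{t,\infty}\norm{v}_{L^p_x}^p$ is integrable in time for $\beta<1$, so after Gronwall's lemma it only enlarges the constant $K$ without affecting the $t^{-1/2}$ decay rate.
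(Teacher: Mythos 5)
Your sketch is correct in outline, but it takes a genuinely different route from the paper: you essentially re-derive the quoted ``coming down from infinity'' estimate, whereas the paper's proof of \cref{lpvest} is a two-line reduction to \cite[Proposition~3.7]{TW18}. Indeed, your plan --- test \eqref{eq:remainder} with $|v|^{p-2}v$, estimate the rough products by Besov duality and interpolation against the good terms $\int_{\T^2}|v|^{p-2}|\nabla v|^2\,\dd x$ and $\norm{v}_{L^{p+2}_x}^{p+2}$, and close with the comparison argument for $y'+cy^{1+2/p}\leq \tilde K(1+y)$, which yields a bound independent of $y(0)$ and hence of $f\in C^{-\al_0}$ --- is precisely the strategy behind the proof of that cited proposition (see also \cite{MW17I}), including the balancing needed for the worst term $\int v^{p-1}\<3>_{0,t}$, and it does close for $\al$ small with polynomial dependence on the Wick norms. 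The only work the paper actually does is the point specific to this setting: the Wick powers here vanish at $t=0$, so the time-dependent constant $c_{t,\infty}\lesssim t^{-\beta/2}$ of \cref{renormc} appears, and the paper absorbs it into $\<2>_{0,t}$ and $\<3>_{0,t}$ at the price of a small time weight $t^{\alpha'}$, after which \cite[Proposition~3.7]{TW18} applies verbatim; you instead keep $c_{t,\infty}$ inside the energy estimate, which also works, though your appeal to ``Gronwall'' there is slightly loose --- a naive Gronwall from $t=0$ is unavailable (the initial datum is a distribution, so $y(0^+)$ may be infinite), and the clean way is to absorb $c_{t,\infty}\norm{v}_{L^p_x}^p\leq \eps\norm{v}_{L^{p+2}_x}^{p+2}+C\,c_{t,\infty}^{(p+2)/2}$ and check that the resulting forcing $t^{-\beta(p+2)/4}$ is subcritical for the $t^{-p/2}$ supersolution, which it is for $\beta$ small. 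In short: your route buys a self-contained proof with explicit constants; the paper's route buys brevity by outsourcing exactly the argument you sketch to \cite{TW18}.
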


\begin{proof}
    Follows from \cite[Proposition~3.7]{TW18}. The constant $c_{t,\infty}$ in \cref{renormc} can be absorbed into the terms $\<2>_{0,t}$ and $\<3>_{0,t}$ which together with Proposition~\ref{renormc}
    yield
    \begin{equs}
     \sup_{t\leq 1} t^{\alpha'} \|\<2>_{0,t} - c_{t,\infty}\|_{-\alpha} \lesssim \sup_{t\leq 1} \|\<2>_{0,t}\|_{C^{-\alpha}}, \  \sup_{t\leq 1} t^{\alpha'} \|\<3>_{0,t} - 3c_{t,\infty} \<1>_{0,t}\|_{-\alpha} \lesssim \max_{k=1,3}\sup_{t\leq 1} \|\<k>_{0,t}\|_{C^{-\alpha}},
    \end{equs}
    for any $\alpha'>0$, allowing us to apply \cite[Proposition~3.7]{TW18}. 
\end{proof}

\begin{lem}\label{cvest} 
    Let $\alpha>0$ be sufficiently small. Then for every $\kappa>0$ sufficiently small the following estimate holds 
    \begin{align}
        \sup_{t\leq 1} t^{\frac{1}{2}+\kappa} \norm*{v_{0, t}}_{\kappa} \leq C, 
    \end{align}
    where $C\equiv$ depends polynomially on $\sup_{t\leq 1} \|\<k>_{0,t}\|_{-\alpha}$ for $k=1,2,3$ and is uniform in the initial condition $f$. 
\end{lem}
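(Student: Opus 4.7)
The plan is to mirror the proof of \cref{lpvest} (essentially \cite[Proposition~3.7]{TW18}) but in the stronger $C^\kappa$-topology, with inputs the uniform-in-$f$ estimate of \cref{lpvest}, the smoothing estimate \cref{besovsemi}, the Besov product estimates \cref{besovmult1}--\cref{besovmult2}, and a bootstrap to handle the singular mixed products. The key step is to apply Duhamel's formula starting from time $t/2$ in order to decouple from the rough initial condition:
\begin{equs}
v_{0,t} = S_{t/2} v_{0,t/2} - \int_{t/2}^t S_{t-s}\bigl( v_{0,s}^3 + 3 v_{0,s}^2 \<1>_{0,s} + 3 v_{0,s} \<2>_{0,s} + \<3>_{0,s} - 3c_{s,\infty}(\<1>_{0,s} + v_{0,s})\bigr) \dx{s}.
\end{equs}
For the free part, choose $p$ large enough that $2/p \leq \kappa$, embed $L^p_x \hookrightarrow C^{-2/p}$ and apply \cref{besovsemi} together with \cref{lpvest} to bound $\norm{S_{t/2}v_{0,t/2}}_\kappa \lesssim t^{-1/2-\kappa/2-1/p} \lesssim t^{-1/2-\kappa}$. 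The cubic term $v^3$, the pure Wick term $\<3>_{0,s}$, and the $c_{s,\infty}$-contributions are handled analogously via \cref{lpvest}, \cref{stochest}, \cref{renormc} and \cref{besovsemi}; each produces a contribution strictly better than the target $t^{-1/2-\kappa}$ when $\kappa$, $\alpha$, $1/p$ are sufficiently small.

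The delicate terms are the mixed products $v_{0,s}^2\<1>_{0,s}$ and $v_{0,s}\<2>_{0,s}$: since $\<1>_{0,s},\<2>_{0,s}\in C^{-\alpha}$, Bony's resonant pairing requires the $v$-factor to carry positive regularity strictly larger than $\alpha$. I would resolve this circularity through a bootstrap on
\begin{equs}
N(T) := \sup_{0 < t \leq T} t^{\frac{1}{2}+\kappa}\, \norm{v_{0,t}}_\kappa,
\end{equs}
which is finite for each fixed $T>0$ by classical parabolic regularization of \eqref{eq:remainder}. Using that $C^\kappa$ is a multiplication algebra for $\kappa>0$ together with \cref{besovmult2} (applicable as soon as $\kappa>\alpha$), the offending products are bounded by $\norm{v_{0,s}^2\<1>_{0,s}}_{\mathcal{B}^{-\alpha}_{p,p}}\lesssim N(T)^2 s^{-1-2\kappa}\norm{\<1>_{0,s}}_{-\alpha}$ and $\norm{v_{0,s}\<2>_{0,s}}_{\mathcal{B}^{-\alpha}_{p,p}}\lesssim N(T) s^{-\frac{1}{2}-\kappa}\norm{\<2>_{0,s}}_{-\alpha}$. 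Together with \cref{besovsemi} and time integration against $(t-s)^{-(\kappa+\alpha+2/p)/2}$, one obtains a self-improving inequality of the form
\begin{equs}
N(T) \leq C_0 + C_1 T^\delta N(T) + C_2 T^\delta N(T)^2
\end{equs}
with $\delta>0$ and $C_0,C_1,C_2$ depending polynomially on $\sup_{s\leq 1}\norm{\<k>_{0,s}}_{-\alpha}$, uniformly in $f$.

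The main obstacle is precisely this bootstrap: I must arrange for $\kappa>\alpha$ (so that \cref{besovmult2} applies to $v\cdot\<k>$) and for $\kappa$, $\alpha$, $1/p$ small enough that $3\kappa+\alpha+2/p$ stays strictly below $1$, which keeps $\delta>0$. The self-improving inequality is then closed by a standard continuity argument in $T$, yielding $N(T)\leq 2C_0$ on some short interval $[0,T_*]$; this is propagated to $[0,1]$ by iterating the argument over finitely many slices, using at each step that \cref{lpvest} provides a uniform $L^p_x$-bound from which the next slice can be restarted with the same constants. The final constant depends polynomially on the Wick norms and is uniform in the initial condition $f$, as claimed.
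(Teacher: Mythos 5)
Your proposal is correct in substance, but note that the paper does not actually prove this lemma in-house: its ``proof'' is a pointer to the proof of Lemma~5.1 in \cite{TW20} applied with $s=\tfrac t2$, together with the remark (as in \cref{lpvest}) that $c_{t,\infty}$ can be absorbed into $\<2>_{0,t}$ and $\<3>_{0,t}$. What you have written is essentially a self-contained reconstruction of that cited argument: Duhamel from the midpoint, the coming-down-from-infinity bound of \cref{lpvest} to make the free part and the cubic term uniform in $f$, heat smoothing (\cref{besovsemi}) plus the Besov product estimates for the remaining terms, and a quantitative bootstrap in the $C^\kappa$-norm to define and bound the paraproducts $v_{0,s}^2\<1>_{0,s}$ and $v_{0,s}\<2>_{0,s}$. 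Two points deserve care. First, your continuity argument closes the inequality $N(T)\le C_0+C_1T^\delta N(T)+C_2T^\delta N(T)^2$ only on $[0,T_*]$ with $T_*$ polynomially small in the Wick norms; when you propagate, you must move the Duhamel base point with the slice (restart at $t-h$ with $h\lesssim T_*$, the free part controlled through the $L^p_x$-bound of \cref{lpvest} at time $t-h$). If you instead keep the base point at $t/2$ on later slices, the integration window has length of order one, the quadratic term carries no small prefactor, and the per-slice inequality does not close for large Wick norms; with the moving base point each window's bound depends only on \cref{lpvest} and the Wick norms, nothing compounds across the (random, polynomially many) slices, and the final constant remains polynomial, which is exactly what the lemma claims and what the moment estimates in \cref{betterboundJ} and \cref{gradvest} need. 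Second, your use of \cref{besovmult2} forces $\kappa>\alpha$ (and $3\kappa+\alpha+2/p<1$), so strictly speaking you obtain the estimate for $\kappa$ in a range above $\alpha$ rather than for every sufficiently small $\kappa$; this is harmless, since the paper only invokes the lemma with $\kappa=2\alpha$, and the same implicit compatibility between $\kappa$ and $\alpha$ is present in the cited proof.
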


\begin{proof}
   The statement follows essentially from the proof of Lemma 5.1 for $s=\frac{t}{2}$ in \cite{TW20} working with  $\<1>_{0,t}$, $\<2>_{0,t} - c_{t,\infty}$, $\<3>_{0,t} - 3 c_{t,\infty}\<1>_{0,t}$ as we explained in the proof of
   \cref{lpvest}. The terms $I_6$ and $I_7$ in the notation of \cite[proof of Lemma~5.1]{TW20} can be ignored. 
\end{proof}

\begin{lem}\label{gradvest}
    Let $\alpha>0$ be sufficiently small. Then for any $\eps > 0$ the following estimate holds
    \begin{align}
        \sup_{t \leq 1} t^{1 + \eps} \norm*{\nabla v_{0, t}}_{L^\infty_x} \leq C,
    \end{align}
    where $C$ depends polynomially on $\sup_{t\leq 1} \|\<k>_{0,t}\|_{-\alpha}$ for $k=1,2,3$ and is uniform in the initial condition $f$. 
\end{lem}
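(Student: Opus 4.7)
The plan is to mirror the strategy of the proof of \cite[Lemma~5.1]{TW20}, running a Duhamel bootstrap starting at time $t/2$ rather than $0$ so that the base point of the expansion is $v_{0,t/2}$. By the ``coming down from infinity'' estimate in \cref{cvest}, this quantity has an $f$-independent $C^\kappa$-bound, so the rough initial datum $f$ drops out. Concretely, I start from
\begin{equation*}
 v_{0,t} = S_{t/2} v_{0,t/2} + \int_{t/2}^t S_{t-s}\bigl[-v_{0,s}^3 - 3 v_{0,s}^2 \<1>_{0,s} - 3 v_{0,s} \<2>_{0,s} - \<3>_{0,s} + 3 c_{s,\infty}(\<1>_{0,s} + v_{0,s})\bigr] \dd s,
\end{equation*}
apply $\nabla$ to both sides, and bound each resulting term in $L^\infty_x$.

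The main tool is a Schauder-type bound for $\nabla S_\tau$ derived from \cref{besovsemi} combined with the embedding $B^{s}_{p,\infty} \hookrightarrow L^\infty_x$ for $s > 2/p$: for any $\gamma \geq 0$, $p<\infty$ and $\eps > 0$,
\begin{equation*}
 \norm{\nabla S_\tau g}_{L^\infty_x} \lesssim (\tau \wedge 1)^{-\frac{1+\gamma+\eps}{2}} \ee^{-m\tau} \norm{g}_{-\gamma}, \qquad \norm{\nabla S_\tau g}_{L^\infty_x} \lesssim (\tau\wedge 1)^{-\frac{1}{2}-\frac{1}{p}-\frac{\eps}{2}} \ee^{-m\tau} \norm{g}_{L^p_x}.
\end{equation*}
For the linear term, applying the first bound with $\gamma = -\kappa$ for $\kappa > 0$ small and using \cref{cvest} yields $\norm{\nabla S_{t/2} v_{0,t/2}}_{L^\infty_x} \lesssim t^{-1-\frac{\kappa+\eps}{2}}$; this is the dominant contribution. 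The cubic term $v_{0,s}^3$ will be handled via the second bound at exponent $3p$ together with \cref{lpvest}, producing an integral of the form $\int_{t/2}^t (t-s)^{-\frac{1}{2}-\frac{1}{p}-\frac{\eps}{2}}s^{-3/2}\,\dd s \lesssim t^{-1-\frac{1}{p}-\frac{\eps}{2}}$. The mixed terms $v_{0,s}^2 \<1>_{0,s}$ and $v_{0,s}\<2>_{0,s}$ will be treated by first using \cref{besovmult1}--\cref{besovmult2} to place the product in $C^{-\alpha}$, exploiting the $C^\beta$-bound on $v_{0,s}$ from \cref{cvest} with $\beta > \alpha$, and then applying the first Schauder bound; the resulting time integrals produce strictly milder contributions than $t^{-1-\eps}$. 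The pure Wick term $\<3>_{0,s}$ contributes an integrable (in $t$) quantity, and the counter-term contributions involving $c_{s,\infty} \lesssim s^{-\beta/2}$ (cf. \cref{renormc}) are easily absorbed.

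The principal obstacle I anticipate is the simultaneous choice of the parameters $\alpha, \beta, \kappa, p, \eps$: one has to ensure (i) $\beta > \alpha$ for every Besov multiplication to close, (ii) absolute convergence of the Duhamel integrals at $s = t$, and (iii) that the resulting power of $t$ at the lower endpoint $s = t/2$ does not exceed $-1-\eps$ for the prescribed $\eps$. All of these can be met by taking $\alpha$ and $\kappa$ small enough and $p$ large enough depending on $\eps$. Collecting, the implicit constant depends polynomially on $\sup_{s \leq 1}\norm{\<k>_{0,s}}_{-\alpha}$ for $k=1,2,3$ via \cref{lpvest} and \cref{cvest}, and is therefore uniform in $f$.
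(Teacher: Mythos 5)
Your proposal is correct and follows essentially the same route as the paper's proof: Duhamel's formula restarted at $t/2$, gradient heat-kernel/Schauder bounds, Besov multiplication (\cref{besovmult1}--\cref{besovmult2}) for the terms involving Wick powers, and \cref{lpvest}/\cref{cvest} to make the bound uniform in $f$ via coming down from infinity. The only cosmetic deviation is that you bound the linear term $\nabla S_{t/2}v_{0,t/2}$ through \cref{cvest} and a Schauder estimate, whereas the paper uses Young's inequality with $\norm{\nabla H_{t/2}}_{L^{p'}_x}$ and the $L^p_x$-bound of \cref{lpvest}; both yield the admissible $t^{-1-\eps}$-type singularity.
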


\begin{proof}
    To ease the notation we set $\eta:=\max_{k=1,2,3}\sup_{t\leq 1} \|\<k>_{0,t}\|_{-\alpha}$. By Duhamel's formula, we have 
    \begin{align}
        \norm*{\nabla v_{0, t}}_{L^\infty_x} & \leq \norm*{\nabla H_{\frac{t}{2}} * v_{0, \frac{t}{2}}}_{L^\infty_x} + \sum_{k = 0}^3 \int_{\frac{t}{2}}^t \norm*{\nabla H_{t-r} * \bra*{ \<k>_{0, r} v_{0, r}^{3-k}}}_{L^\infty_x} \dx{r} \\
       & \quad + 3 \int_{\frac{t}{2}}^t c_{r, \infty} \norm*{\nabla H_{t-r} * \bra*{\<1>_{0,r} + v_{0, r}}}_{L^{\infty}_x} \dx{r}.
    \end{align}
    Note in the following that $B^{\eps}_{\infty, \infty}(\T^2) = C^{\eps} \hookrightarrow  L^{\infty}_x $ continuously for any $\eps > 0$.
    Then, first of all, by Young's inequality and \eqref{lpvest}, we have
    \begin{align}
        \norm*{\nabla H_{\frac{t}{2}} * v_{0, \frac{t}{2}}}_{L^\infty_x} &\leq \norm*{\nabla H_{\frac{t}{2}}}_{L^{p'}_x} \norm*{v_{0, \frac{t}{2}}}_{L^p_x} \lesssim t^{-\frac{1}{2} - \frac{1}{p}} t^{-\frac{1}{2}} 
        = t^{-1-\eps}
    \end{align}
    for $p$ large enough.
    In the same vain, using $\eqref{lpvest}$ and $p$ large enough yields
    \begin{align}
        \int_{\frac{t}{2}}^t \norm*{\nabla H_{t-r} * v^3_{0, r}}_{L^\infty_x} \dx{r} &\leq \int_{\frac{t}{2}}^t \norm*{\nabla H_{t-r}}_{L^{p'}_x} \norm*{v^3_{0, r}}_{L^p_x}  \dx{r} = \int_{\frac{t}{2}}^t \norm*{\nabla H_{t-r}}_{L^{p'}_x} \norm*{v_{0, r}}^3_{L^{3p}_x}  \dx{r} \\
        & \leq \int_{\frac{t}{2}}^t (t-r)^{-\frac{1}{2} - \frac{1}{p}} r^{-\frac{3}{2}} \dx{r} \lesssim t^{-1-\eps}.
    \end{align}
    Moreover, using the semigroup property of the heat kernel and Young's inequality again we note
    \begin{align}
        \int_{\frac{t}{2}}^t \norm*{\nabla H_{t-r} * \<3>_{0, r}}_{L^\infty_x} \dx{r} &= \int_{\frac{t}{2}}^t \norm*{\nabla H_{\frac{t-r}{2}} * H_{\frac{t-r}{2}} * \<3>_{0, r}}_{L^\infty_x} \dx{r} \\
        &\leq \int_{\frac{t}{2}}^t \norm*{\nabla H_{\frac{t-r}{2}}}_{L^1_x} \norm*{H_{\frac{t-r}{2}} * \<3>_{0, r}}_{L^\infty_x} \dx{r}.
    \end{align}
    Moreover, by \cref{besovsemi} we conclude
    \begin{align}
    \int_{\frac{t}{2}}^t \norm*{\nabla H_{t-r} * \<3>_{0, r}}_{L^\infty_x} \dx{r} &\lesssim \int_{\frac{t}{2}}^t (t-r)^{-\frac{1}{2}} (t-r)^{-\al - \eps}\dx{r} \lesssim \eta t^{\frac{1}{2} - \al -\eps}.
    \end{align}
    Similarly, using \eqref{cvest} and \cref{besovsemi} we end up with
    \begin{align}
        \int_{\frac{t}{2}}^t  \norm*{\nabla H_{t-r} * \bra*{ v^2_{0, r} \<1>_{0, r}}}_{L^\infty_x} \dx{r} &\leq \int_{\frac{t}{2}}^t  (t-r)^{-\frac{1}{2} - \al - \eps} \norm*{v^2_{0, r} \<1>_{0, r}}_{-\al} \dx{r} \\
        &\leq \int_{\frac{t}{2}}^t  (t-r)^{-\frac{1}{2} - \al - \eps} \norm*{v^2_{0, r}}_{2\al} \norm*{\<1>_{0, r}}_{-\al} \dx{r} \\
        &\lesssim \eta \int_{\frac{t}{2}}^t (t-r)^{-\frac{1}{2} - \al - \eps} r^{-1-2\al} \dx{r} \lesssim \eta t^{-\frac{1}{2} - 3\al - \eps}
    \end{align}
    and in the same vain
    \begin{align}
         \int_{\frac{t}{2}}^t \norm*{\nabla H_{t-r} * \bra*{ v_{0 , r} \<2>_{0, r}}}_{L^\infty_x}  \dx{r} &\lesssim  \int_{\frac{t}{2}}^t (t-r)^{-\frac{1}{2} - \al - \eps} \norm*{v_{0, r} \<2>_{0, r}}_{-\al} \dx{r} \\
         &\lesssim \eta \int_{\frac{t}{2}}^t (t-r)^{-\frac{1}{2} - \al - \eps} r^{-\frac{1}{2} - \al} \dx{r} \lesssim \eta t^{-2\al-\eps}.
    \end{align}
    Finally, using \cref{besovsemi}, \eqref{cvest} and \eqref{renormc} we get
    \begin{align}
        \int_{\frac{t}{2}}^t c_{r, \infty} \norm*{\<1>_{0, r} + v_{0, r}}_{L^{\infty}_x} \dx{r} &\lesssim \int_{\frac{t}{2}}^t \bra*{t-r}^{-\frac{1}{2} - \al - \eps} r^{-\gamma} \norm*{v_{0, r} + \<1>_{0, r}}_{-\al} \dx{r} \lesssim \eta t^{-2\al - 2\eps}.
    \end{align}
\end{proof}

\section{Proof of the Bakry--Émery identity}\label{sec:BEI}

In \cite{RZZ17I} it was proved that
\begin{align}\label{dirichletphi}
    \mathcal{E}(F, F) := \int_{\mathcal{S}'(\T^2)} \norm*{DF}^2_{L^2_x} \dx\nu
\end{align}
where $F \in \mathcal{F}C^{\infty}_b$
is closable (see also \cite{AR91}) and the closure gives rise to a quasi-regular Dirichlet form (cf. \cite{MR92}), hence to a generator $\mathcal{L}$ with domain $D(\mathcal{L}) \subset D(\mathcal{E})$ such that
\begin{align}
    \mathcal{E}(F, F) = - \int_{\mathcal{S}'(\T^2)} F \mathcal{L}F \dx\nu.
\end{align}
We denote by $\{\tilde{P}_t\}_{t \geq 0}$ the associated semi-group. Then, by \cite[Theorem 3.13]{RZZ17I}, we infer that $P_tF = \tilde{P}_tF$ $\nu$-almost surely for all $F \in \mathcal{F}C^{\infty}_b$ and hence by continuity in time they are indistinguishable (see also \cite[p. 67]{Ka05}).
Moreover, by \cite[Theorem 3.7]{RZZ17I} (and the discussion thereafter) $K:=C^{\infty}(\T^2) \subset L^2(\T^2)$ is a a dense and linear subspace consisting of $\nu$-admissible elements. Hence assumptions (C.1), (C.2) and (C.3) of \cite[Section 4]{AR91} are fulfilled. Moreover, $f \mapsto P_tF(f)$ is quasi-continuous for any $F \in \mathcal{F}C^{\infty}_b$. Now we can prove \cref{BEtrick}.

\begin{proof}[Proof of \cref{BEtrick}]
    Following \cite[Proof of Theorem 1.1]{Ka06} we prove the $\nu$-a.s. identity 
    \begin{align}
        \frac{\dx}{\dx s} P_{t-s}\bra*{P_s F}^2 = - 2 P_{t-s} \bra*{\norm*{D P_s F}^2_{L^2_x}}.
    \end{align}
    and use the same notation. Let $0 \leq r_1, r_2 \leq t$ and define $H(r_1, r_2) := P_{t-r_1} \bra*{P_{r_2}F}^2$. 
    
    By \cite[p. 364, Theorem 4.3]{AR91} and since $P_{r_2}F \in D(\mathcal{E})$ it holds that
    \begin{align}
        P_{r_2}F(u_r^f) - P_{r_2}F(f) = \int_0^r \mathcal{L}(P_{r_2}F)(u_s^f) \dx{s} + M_r
    \end{align}
    where $M$ is a continuous martingale.
    
    Moreover, by \cite[p. 365, Proposition 4.5]{AR91} the quadratic variation of $M$ is given by
    \begin{align}
        \langle M \rangle_r = \int_0^r \norm*{DP_{r_2}F(u_s^f)}^2_{L^2_x} \dx{s}
    \end{align}
    Then by It\^o's formula \cite[p. 222, Theorem 3.3]{RY99} we compute
    \begin{align}
        \bra*{P_{r_2}F}^2(u_r^f) = \bra*{P_{r_2}F}^2(f) &+ 2\int_0^r P_{r_2}F(u_s^f) \dx M_s + 2 \int_0^r P_{r_2}F(u_s^f) \mathcal{L}(P_{r_2}F)(u_s^f) \dx{s} \\
        &+ 2 \int_0^r \norm{DP_{r_2}F (u_s^f)}^2_{L^2_x} \dx{s}
    \end{align}
    and hence
    \begin{align}
        P_{t - r_1}\bra*{P_{r_2}F}^2(f) = \bra*{P_{r_2}F}^2(f) &+ 2 \int_0^{t-r_1} P_{s}\bra*{P_{r_2}F \mathcal{L}P_{r_2}F}(f) \dx{s} 
        + 2 \int_0^{t - r_1} P_s\norm*{DP_{r_2}F}^2_{L^2_x}(f) \dx{s}.
    \end{align}
    Then we see that
    \begin{align}
        \frac{\partial}{\partial r_1} P_{t - r_1}\bra*{P_{r_2}F}^2(f) = -2 P_{t-r_1}\bra*{P_{r_2}F \mathcal{L}P_{r_2}F}(f) - 2 P_{t-r_1}\norm*{DP_{r_2}F}^2_{L^2_x}(f)
    \end{align}
    and on the other hand we have
    \begin{align}
        \frac{\partial}{\partial r_2} P_{t - r_1}\bra*{P_{r_2}F}^2(f) = 2 P_{t-r_1}\bra*{P_{r_2}F \mathcal{L}P_{r_2}F}(f).
    \end{align}
    Continuity follows in the same vain as in \cite[Proof of Theorem 1.1]{Ka06}.
    Finally, we have
    \begin{align}
         \frac{\dx}{\dx s} P_{t-s}\bra*{P_s F}^2 &= \frac{\partial}{\partial r_1} P_{t - r_1}\bra*{P_{r_2}F}^2(f)\Bigr|_{r_1 = r_2 = s} + \frac{\partial}{\partial r_2} P_{t - r_1}\bra*{P_{r_2}F}^2(f)\Bigr|_{r_1 = r_2 = s} \\ 
         &= - 2 P_{t-r_1}\norm*{DP_{r_2}F}^2_{L^2_x}(f).
    \end{align}
    Integrating from $0$ to $t$ proves the claim.
\end{proof}

\section{Differentiability with respect to the initial data}\label{diffvproof}

We set
\begin{align}
    G(f, v)(t) := S(t)f + \int_0^t S(t-s)F(v_s) \dx{s} - v_t
\end{align}
where $F(v_t) := -\bra*{v_t^3 + 3 v_t^2 \<1>_t + 3 v_t \<2>_t + \<3>_t - c_{t, \infty}(v_t + \<1>_t)}$. By \cite[Theorem 3.9]{TW18} 
there exist fixed parameters $\gamma, \beta > 0$ such that for any $f^* \in C^{-\al_0}$ and $T>0$ we can find a unique solution 
$v^*$ to \eqref{eq:remainder} satisfying $G(f^*, v^*)(t) = 0$, for every $0 \leq t \leq T$, and $\sup_{0 \leq t \leq T} (t\wedge1)^{\gamma} \norm*{v^*_t}_{\beta} < \infty$. We define
\begin{align}
    X := \set*{f \in C^{-\al_0} : \norm*{f}_{-\al_0} \leq R}, \quad Y := \set*{v: \pra*{0, T} \to C^{\beta} : \sup_{0 \leq t \leq T \wedge T^*} t^{\gamma} \norm*{v_t}_{\beta} \leq 1}
\end{align}
for some $T^*$ to be chosen below. Then again by \cite[Theorem 3.9]{TW18} we know that $G(f^*, v^*)(t) = 0$, for every $0 \leq t \leq T \wedge T^*$. It is easy to check that $G$ is Frechét-differentiable and we have
\begin{align}
    G_v(f^*, v^*)\delta v (t) = \int_0^t S(t-s) \bra*{F'(v_s) \delta v_s} \dx{s} - \delta v_t =: (K - Id)\delta v_t
\end{align}
where $F'(v_t) \delta v_t := -3\bra*{v_t^2 + 2 v_t \<1>_t +\<2>_t - c_{t, \infty}}\delta v_t$. A simple calculation shows that
\begin{align}
    \norm*{K\delta v_t}_{\beta} &\lesssim \int_0^t \bra*{t-s}^{-\frac{\al + \beta}{2}} s^{-\gamma} \norm*{\delta v_s}_{\beta} \dx{s} 
    \lesssim \bra*{T \wedge T^*}^{1 - \frac{\al + \beta}{2} - \gamma} \sup_{0 \leq t \leq T^* \wedge T'} t^{\gamma} \norm*{\delta v_t}_{\beta}.
\end{align}
Choosing $T^*$ small enough such that the r.h.s. above is strictly smaller than $1$ we get by the Neumann-series criterion 
that $G_v(f^*, v^*): Y \to Y $ is a bijection. Hence by \cite[Theorem 4.E]{Z95} we get that $f \mapsto v^f$ is differentiable and
its derivative in $h$ is a mild solution to \eqref{inieq} on $(0, T \wedge T*]$. Concatenating this argument to cover the whole time interval 
$(0,T]$ proves the assertion.

\end{appendices}

\addtocontents{toc}{\protect\setcounter{tocdepth}{1}}

\pdfbookmark{References}{references}
\addtocontents{toc}{\protect\contentsline{section}{References}{\thepage}{references.0}}

\bibliographystyle{plainurl}
\bibliography{refs}

\begin{thebibliography}{10}

\bibitem{AR91}
S.~Albeverio and M.~R\"{o}ckner.
\newblock Stochastic differential equations in infinite dimensions: solutions
  via {D}irichlet forms.
\newblock {\em Probab. Theory Related Fields}, 89(3):347--386, 1991.
\newblock \href {https://doi.org/10.1007/BF01198791}
  {\path{doi:10.1007/BF01198791}}.

\bibitem{B06}
D.~Bakry.
\newblock Functional inequalities for {M}arkov semigroups.
\newblock In {\em Probability measures on groups: recent directions and
  trends}, pages 91--147. Tata Inst. Fund. Res., Mumbai, 2006.

\bibitem{BE85}
D.~Bakry and M.~\'{E}mery.
\newblock Diffusions hypercontractives.
\newblock In {\em S\'{e}minaire de probabilit\'{e}s, {XIX}, 1983/84}, volume
  1123 of {\em Lecture Notes in Math.}, pages 177--206. Springer, Berlin, 1985.
\newblock \href {https://doi.org/10.1007/BFb0075847}
  {\path{doi:10.1007/BFb0075847}}.

\bibitem{BGL14}
D.~Bakry, I.~Gentil, and M.~Ledoux.
\newblock {\em Analysis and geometry of {M}arkov diffusion operators}, volume
  348 of {\em Grundlehren der Mathematischen Wissenschaften [Fundamental
  Principles of Mathematical Sciences]}.
\newblock Springer, Cham, 2014.
\newblock \href {https://doi.org/10.1007/978-3-319-00227-9}
  {\path{doi:10.1007/978-3-319-00227-9}}.

\bibitem{BD22}
R.~Bauerschmidt and B.~Dagallier.
\newblock Log-sobolev inequality for the $\varphi^4_2$ and $\varphi^4_3$
  measures, 2022.
\newblock \href {http://arxiv.org/abs/2202.02295} {\path{arXiv:2202.02295}}.

\bibitem{BB21}
T.~Bauerschmidt, R.and~Bodineau.
\newblock Log-{S}obolev inequality for the continuum sine-{G}ordon model.
\newblock {\em Comm. Pure Appl. Math.}, 74(10):2064--2113, 2021.
\newblock \href {https://doi.org/10.1002/cpa.21926}
  {\path{doi:10.1002/cpa.21926}}.

\bibitem{CLL13}
T.~Cass, C.~Litterer, and T.~Lyons.
\newblock Integrability and tail estimates for {G}aussian rough differential
  equations.
\newblock {\em Ann. Probab.}, 41(4):3026--3050, 2013.
\newblock \href {https://doi.org/10.1214/12-AOP821}
  {\path{doi:10.1214/12-AOP821}}.

\bibitem{CG14}
P.~Cattiaux and A.~Guillin.
\newblock Semi log-concave {M}arkov diffusions.
\newblock In {\em S\'{e}minaire de {P}robabilit\'{e}s {XLVI}}, volume 2123 of
  {\em Lecture Notes in Math.}, pages 231--292. Springer, Cham, 2014.
\newblock URL: \url{https://doi.org/10.1007/978-3-319-11970-0_9}, \href
  {https://doi.org/10.1007/978-3-319-11970-0\_9}
  {\path{doi:10.1007/978-3-319-11970-0\_9}}.

\bibitem{DPD03}
G.~Da~Prato and A.~Debussche.
\newblock Strong solutions to the stochastic quantization equations.
\newblock {\em Ann. Probab.}, 31(4):1900--1916, 2003.
\newblock \href {https://doi.org/10.1214/aop/1068646370}
  {\path{doi:10.1214/aop/1068646370}}.

\bibitem{DO20}
M.~Duerinckx and F.~Otto.
\newblock Higher-order pathwise theory of fluctuations in stochastic
  homogenization.
\newblock {\em Stoch. Partial Differ. Equ. Anal. Comput.}, 8(3):625--692, 2020.
\newblock \href {https://doi.org/10.1007/s40072-019-00156-4}
  {\path{doi:10.1007/s40072-019-00156-4}}.

\bibitem{EKS15}
M.~Erbar, K.~Kuwada, and K.T. Sturm.
\newblock On the equivalence of the entropic curvature-dimension condition and
  {B}ochner's inequality on metric measure spaces.
\newblock {\em Invent. Math.}, 201(3):993--1071, 2015.
\newblock \href {https://doi.org/10.1007/s00222-014-0563-7}
  {\path{doi:10.1007/s00222-014-0563-7}}.

\bibitem{GO11}
A.~Gloria and F.~Otto.
\newblock An optimal variance estimate in stochastic homogenization of discrete
  elliptic equations.
\newblock {\em Ann. Probab.}, 39(3):779--856, 2011.
\newblock \href {https://doi.org/10.1214/10-AOP571}
  {\path{doi:10.1214/10-AOP571}}.

\bibitem{GH19}
M.~Gubinelli and M.~Hofmanov\'{a}.
\newblock Global solutions to elliptic and parabolic {$\Phi^4$} models in
  {E}uclidean space.
\newblock {\em Comm. Math. Phys.}, 368(3):1201--1266, 2019.
\newblock \href {https://doi.org/10.1007/s00220-019-03398-4}
  {\path{doi:10.1007/s00220-019-03398-4}}.

\bibitem{GH21}
M.~Gubinelli and M.~Hofmanov\'{a}.
\newblock A {PDE} construction of the {E}uclidean {$\phi_3^4$} quantum field
  theory.
\newblock {\em Comm. Math. Phys.}, 384(1):1--75, 2021.
\newblock \href {https://doi.org/10.1007/s00220-021-04022-0}
  {\path{doi:10.1007/s00220-021-04022-0}}.

\bibitem{GIP15}
M.~Gubinelli, P.~Imkeller, and N.~Perkowski.
\newblock Paracontrolled distributions and singular {PDE}s.
\newblock {\em Forum Math. Pi}, 3:e6, 75, 2015.
\newblock \href {https://doi.org/10.1017/fmp.2015.2}
  {\path{doi:10.1017/fmp.2015.2}}.

\bibitem{Ha14}
M.~Hairer.
\newblock A theory of regularity structures.
\newblock {\em Invent. Math.}, 198(2):269--504, 2014.
\newblock \href {https://doi.org/10.1007/s00222-014-0505-4}
  {\path{doi:10.1007/s00222-014-0505-4}}.

\bibitem{HMK18}
M.~Hairer and K.~Matetski.
\newblock Discretisations of rough stochastic {PDE}s.
\newblock {\em Ann. Probab.}, 46(3):1651--1709, 2018.
\newblock \href {https://doi.org/10.1214/17-AOP1212}
  {\path{doi:10.1214/17-AOP1212}}.

\bibitem{HM18}
M.~Hairer and J.~Mattingly.
\newblock The strong {F}eller property for singular stochastic {PDE}s.
\newblock {\em Ann. Inst. Henri Poincar\'{e} Probab. Stat.}, 54(3):1314--1340,
  2018.
\newblock \href {https://doi.org/10.1214/17-AIHP840}
  {\path{doi:10.1214/17-AIHP840}}.

\bibitem{HS21}
M.~Hairer and P.~Sch\"{o}nbauer.
\newblock The support of singular stochastic partial differential equations.
\newblock {\em Forum Math. Pi}, 10:Paper No. e1, 127, 2022.
\newblock \href {https://doi.org/10.1017/fmp.2021.18}
  {\path{doi:10.1017/fmp.2021.18}}.

\bibitem{IORT20}
R.~Ignat, F.~Otto, T.~Ried, and P.~Tsatsoulis.
\newblock Variational methods for a singular spde yielding the universality of
  the magnetization ripple, 2020.
\newblock \href {http://arxiv.org/abs/2010.13123} {\path{arXiv:2010.13123}}.

\bibitem{Ka05}
H.~Kawabi.
\newblock The parabolic {H}arnack inequality for the time dependent
  {G}inzburg-{L}andau type {SPDE} and its application.
\newblock {\em Potential Anal.}, 22(1):61--84, 2005.
\newblock \href {https://doi.org/10.1007/s11118-004-6456-4}
  {\path{doi:10.1007/s11118-004-6456-4}}.

\bibitem{Ka06}
H.~Kawabi.
\newblock A simple proof of log-{S}obolev inequalities on a path space with
  {G}ibbs measures.
\newblock {\em Infin. Dimens. Anal. Quantum Probab. Relat. Top.},
  9(2):321--329, 2006.
\newblock \href {https://doi.org/10.1142/S021902570600238X}
  {\path{doi:10.1142/S021902570600238X}}.

\bibitem{LOTT21}
P.~Linares, F.~Otto, M.~Tempelmayr, and P.~Tsatsoulis.
\newblock A diagram-free approach to the stochastic estimates in regularity
  structures, 2021.
\newblock \href {http://arxiv.org/abs/2112.10739} {\path{arXiv:2112.10739}}.

\bibitem{MR92}
Z.~M. Ma and M.~R\"{o}ckner.
\newblock {\em Introduction to the theory of (nonsymmetric) {D}irichlet forms}.
\newblock Universitext. Springer-Verlag, Berlin, 1992.
\newblock \href {https://doi.org/10.1007/978-3-642-77739-4}
  {\path{doi:10.1007/978-3-642-77739-4}}.

\bibitem{MW20}
A.~Moinat and H.~Weber.
\newblock Space-time localisation for the dynamic {$\Phi^4_3$} model.
\newblock {\em Comm. Pure Appl. Math.}, 73(12):2519--2555, 2020.
\newblock \href {https://doi.org/10.1002/cpa.21925}
  {\path{doi:10.1002/cpa.21925}}.

\bibitem{MW17II}
J.-C. Mourrat and H.~Weber.
\newblock The dynamic {$\Phi^4_3$} model comes down from infinity.
\newblock {\em Comm. Math. Phys.}, 356(3):673--753, 2017.
\newblock \href {https://doi.org/10.1007/s00220-017-2997-4}
  {\path{doi:10.1007/s00220-017-2997-4}}.

\bibitem{MW17I}
J.-C. Mourrat and H.~Weber.
\newblock Global well-posedness of the dynamic {$\Phi^4$} model in the plane.
\newblock {\em Ann. Probab.}, 45(4):2398--2476, 2017.
\newblock \href {https://doi.org/10.1214/16-AOP1116}
  {\path{doi:10.1214/16-AOP1116}}.

\bibitem{Ne}
E.~Nelson.
\newblock The free {M}arkoff field.
\newblock {\em J. Functional Analysis}, 12:211--227, 1973.
\newblock \href {https://doi.org/10.1016/0022-1236(73)90025-6}
  {\path{doi:10.1016/0022-1236(73)90025-6}}.

\bibitem{O01}
F.~Otto.
\newblock The geometry of dissipative evolution equations: the porous medium
  equation.
\newblock {\em Comm. Partial Differential Equations}, 26(1-2):101--174, 2001.
\newblock \href {https://doi.org/10.1081/PDE-100002243}
  {\path{doi:10.1081/PDE-100002243}}.

\bibitem{OV00}
F.~Otto and C.~Villani.
\newblock Generalization of an inequality by {T}alagrand and links with the
  logarithmic {S}obolev inequality.
\newblock {\em J. Funct. Anal.}, 173(2):361--400, 2000.
\newblock \href {https://doi.org/10.1006/jfan.1999.3557}
  {\path{doi:10.1006/jfan.1999.3557}}.

\bibitem{PW81}
G.~Parisi and Y.~S. Wu.
\newblock Perturbation theory without gauge fixing.
\newblock {\em Sci. Sinica}, 24(4):483--496, 1981.

\bibitem{RY99}
D.~Revuz and M.~Yor.
\newblock {\em Continuous martingales and {B}rownian motion}, volume 293 of
  {\em Grundlehren der Mathematischen Wissenschaften [Fundamental Principles of
  Mathematical Sciences]}.
\newblock Springer-Verlag, Berlin, third edition, 1999.
\newblock \href {https://doi.org/10.1007/978-3-662-06400-9}
  {\path{doi:10.1007/978-3-662-06400-9}}.

\bibitem{RZZ17II}
M.~R\"{o}ckner, R.~Zhu, and X.~Zhu.
\newblock Ergodicity for the stochastic quantization problems on the
  2{D}-torus.
\newblock {\em Comm. Math. Phys.}, 352(3):1061--1090, 2017.
\newblock \href {https://doi.org/10.1007/s00220-017-2865-2}
  {\path{doi:10.1007/s00220-017-2865-2}}.

\bibitem{RZZ17I}
M.~R\"{o}ckner, R.~Zhu, and X.~Zhu.
\newblock Restricted {M}arkov uniqueness for the stochastic quantization of
  {$P(\Phi)_2$} and its applications.
\newblock {\em J. Funct. Anal.}, 272(10):4263--4303, 2017.
\newblock \href {https://doi.org/10.1016/j.jfa.2017.01.023}
  {\path{doi:10.1016/j.jfa.2017.01.023}}.

\bibitem{TW18}
P.~Tsatsoulis and H.~Weber.
\newblock Spectral gap for the stochastic quantization equation on the
  2-dimensional torus.
\newblock {\em Ann. Inst. Henri Poincar\'{e} Probab. Stat.}, 54(3):1204--1249,
  2018.
\newblock \href {https://doi.org/10.1214/17-AIHP837}
  {\path{doi:10.1214/17-AIHP837}}.

\bibitem{TW20}
P.~Tsatsoulis and H.~Weber.
\newblock Exponential loss of memory for the 2-dimensional {A}llen-{C}ahn
  equation with small noise.
\newblock {\em Probab. Theory Related Fields}, 177(1-2):257--322, 2020.
\newblock \href {https://doi.org/10.1007/s00440-019-00945-x}
  {\path{doi:10.1007/s00440-019-00945-x}}.

\bibitem{RS05}
M.K. von Renesse and K.T. Sturm.
\newblock Transport inequalities, gradient estimates, entropy, and {R}icci
  curvature.
\newblock {\em Comm. Pure Appl. Math.}, 58(7):923--940, 2005.
\newblock \href {https://doi.org/10.1002/cpa.20060}
  {\path{doi:10.1002/cpa.20060}}.

\bibitem{Z95}
E.~Zeidler.
\newblock {\em Applied functional analysis}, volume 109 of {\em Applied
  Mathematical Sciences}.
\newblock Springer-Verlag, New York, 1995.
\newblock Main principles and their applications.

\end{thebibliography}

\begin{flushleft}
\footnotesize \normalfont
\textsc{Florian Kunick\\
Max--Planck--Institute for Mathematics in the Sciences\\ 
04103 Leipzig, Germany}\\
\texttt{\textbf{florian.kunick@mis.mpg.de}}
\end{flushleft}

\begin{flushleft}
\footnotesize \normalfont
\textsc{Pavlos Tsatsoulis\\
Faculty of Mathematics, University of Bielefeld\\
33615 Bielefeld, Germany}\\
\texttt{\textbf{ptsatsoulis@math.uni-bielefeld.de}}
\end{flushleft}

\end{document}